\newcommand{\abs}[1]{\lvert #1 \rvert}
\newcommand{\br}[1]{\overline{#1}}
\newcommand{\td}[1]{\widetilde{#1}}
\newcommand{\tdd}[1]{\widetilde{\widetilde{#1}}}
\newcommand{\mmod}{\negmedspace \sslash \negmedspace}
\newcommand{\mc}[1]{\mathcal{#1}}
\newcommand{\ZZ}{\mathbb{Z}}
\newcommand{\FF}{\mathbb{F}}
\newcommand{\tmf}{\mathit{tmf}}
\newcommand{\ssb}{\bullet}
\newcommand{\ssc}{\circ}
\newcommand{\sscc}{\odot}
\newcommand{\ssccc}{\circledcirc}
\newcommand{\bxi}{\bar{\xi}}
\theoremstyle{definition}
 \newtheorem{thm}{Theorem}[section]
 \newtheorem{cor}[thm]{Corollary}
 \newtheorem{lem}[thm]{Lemma}
 \newtheorem{prop}[thm]{Proposition}
 \newtheorem{rmk}[thm]{Remark}
 \newtheorem{claim}[thm]{Claim}
\newtheorem*{thm*}{Theorem}
\newtheorem*{cor*}{Corollary}
\newtheorem*{lem*}{Lemma}
\newtheorem*{prop*}{Proposition}
\newtheorem*{defn*}{Definition}
\newtheorem*{ex*}{Example}
\newtheorem*{exs*}{Examples}
\newtheorem*{rmk*}{Remark}
\newtheorem*{claim*}{Claim}
\numberwithin{equation}{section}
\numberwithin{figure}{section}
\numberwithin{table}{section}
\DeclareMathOperator{\coker}{coker}
\DeclareMathOperator{\Ext}{Ext}
\DeclareMathOperator{\Hom}{Hom}
\DeclareMathOperator*{\Tot}{Tot}
\title{On the existence of a $v_2^{32}$-self map on $M(1,4)$ at the prime
$2$}
\author[M.~Behrens]{M.~Behrens$\sp 1$}
\address{
Dept. of Mathematics \\
M.I.T. \\
Cambridge, MA  02139, U.S.A.
}
\author{M.~Hill}
\address{
Dept. of Mathematics \\
University of Virginia \\
Charlottesville, VA 22904, U.S.A.
}
\author[M.J.~Hopkins]{M.J.~Hopkins$\sp 2$}
\address{
Dept. of Mathematics \\
Harvard University \\
Cambridge, MA  02138, U.S.A.
}
\author{M.~Mahowald}
\address{
Dept. of Mathematics \\
Northwestern University \\
Evanston, IL  60208, U.S.A.
}
\subjclass[2000]{Primary 55Q51; Secondary 55Q40}
\date{\today}
\begin{document}

\begin{abstract}
Let $M(1)$ be the mod $2$ Moore spectrum.  J.F.~Adams proved that $M(1)$
admits a minimal $v_1$-self map $v_1^4: \Sigma^8 M(1) \rightarrow M(1)$.
Let $M(1,4)$ be the cofiber of this self-map.  The purpose of this paper is
to prove that $M(1,4)$ admits a minimal $v_2$-self map of the form
$v_2^{32}: \Sigma^{192}M(1,4) \rightarrow M(1,4)$.  The existence of this
map implies the existence of many $192$-periodic families
of elements in the stable homotopy groups of spheres.
\end{abstract}

\footnotetext[1]{The author is supported by the NSF and the Sloan
Foundation.}
\footnotetext[2]{The author is supported by the NSF.}

\maketitle
\tableofcontents
\bibliographystyle{amsalpha}

\section{Introduction}

Fix a prime $p$.
The $p$-component of the 
stable homotopy groups of spheres admits a filtration called the
chromatic filtration.  
Elements in the $n$th layer of this filtration fit
into infinite $v_n$-periodic families.  Theoretically, this process is well
understood, thanks to the Nilpotence and Periodicity Theorems of Devinatz,
Hopkins, and Smith \cite{NilpI}, \cite{NilpII}.  

It is difficult
in practice, however, to explicitly identify $v_n$-periodic elements, and to
determine their periods.  One useful technique is to inductively form
cofiber sequences:
\begin{gather*}
S \xrightarrow{p^{i_0}} S \rightarrow M(i_0), \\
\Sigma^{2i_1(p-1)} M(i_0) \xrightarrow{v_1^{i_1}} M(i_0) \rightarrow M(i_0,i_1), \\
\vdots \\
\Sigma^{2i_n(p^n-1)} M(i_0, \ldots, i_{n-1}) \xrightarrow{v_n^{i_n}} M(i_0, 
\ldots, i_{n-1})
\rightarrow M(i_0, \ldots, i_n).
\end{gather*}
The maps $v_k^i$ are $v_k$-self maps.  The Periodicity Theorem guarantees
their existence for large $i$.
The reader is warned that there are potentially many non-homotopic
$v_k^i$-self maps, so the homotopy types of the spectra $M(i_0, \ldots,
i_n)$ are not determined merely from the sequence $(i_0, \ldots, i_n)$.

It is challenging to determine the
minimal sequence $(i_0, i_1, \ldots,
i_n)$.  This minimal sequence determines the periods of the primary
constituents of the
$v_n$-periodic families in the stable homotopy groups of spheres.  
We refer the reader to \cite[Ch.~5.5]{Ravenel},
\cite{Ravenelorange}, and \cite{Behrensroot2}
for a more detailed discussion.

We give a brief synopsis of what is known concerning the minimal sequence
of integers $(i_0, \ldots, i_n)$ so that the spectrum $M(i_0, \ldots, i_n)$
exists at a given prime $p$.
For $p \ge 3$, it is known that the complex $M(1,1)$ is minimal 
\cite{Adams}, 
for $p \ge 5$, the complex $M(1,1,1)$ is minimal \cite{Smith},
and for $p \ge 7$, the complex $M(1,1,1,1)$ is minimal
\cite{Toda}.  
For $p = 2$, the complex $M(1,4)$ is minimal \cite{Adams}, and for $p = 3$,
the complex $M(1,1,9)$ is minimal \cite{BehrensPemmaraju}.

In \cite{DavisMahowald}, it was argued that the complex $M(1,4,8)$ is minimal
at the prime $2$, i.e. that there is a $v_2$-self map:
$$ \Sigma^{48}M(1,4) \xrightarrow{v_2^8} M(1,4). $$
The result is incorrect: the image of $v_2^8$ in the Adams-Novikov spectral
sequence for $\tmf$ is not a permanent cycle \cite{HopkinsMahowald}, \cite{Bauer}.  In fact the
first multiple of $v_2$ which is a permanent cycle in this spectral
sequence is $v_2^{32}$.  The purpose of this paper is to prove the
following theorem.

\begin{thm}\label{thm:mainthm}
There is a $v_2^{32}$-self map
$$ v : \Sigma^{192} M(1,4) \rightarrow M(1,4). $$
\end{thm}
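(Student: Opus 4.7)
The natural strategy is to detect $v$ as a permanent cycle in the classical mod $2$ Adams spectral sequence
$$ E_2^{s,t} = \Ext_{\sA}^{s,t}(H^*M(1,4), H^*M(1,4)) \Rightarrow [\Sigma^{t-s}M(1,4), M(1,4)]_2^{\wedge}. $$
Because $H^*M(1,4)$ is isomorphic to $\sA /\!/ \sA(1)$ as an $\sA$-module, change of rings rewrites the $E_2$-page as $\Ext_{\sA(1)}^{s,t}(\FF_2, \sA /\!/ \sA(1))$, which is far more tractable than the naive double complex. The problem then splits into two tasks: (i) exhibit a class $\tilde v_2^{32}$ of stem $192$ in this Ext group that detects a $v_2^{32}$-self map, and (ii) show that it is a permanent cycle in the Adams spectral sequence.

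For (i), the anchor is the known $\tmf$-level result that $v_2^{32}$ (but not $v_2^{8}$) is a permanent cycle in the Adams--Novikov spectral sequence for $\tmf$, due to Hopkins--Mahowald and Bauer. I would refine the Adams $E_2$-page by an $\sA(2)$-based filtration --- essentially a Davis--Mahowald-type spectral sequence whose input is built from $\Ext_{\sA(2)}$-groups (applied to $\sA\!/\!/\sA(1)$) and whose abutment is $\Ext_{\sA}(H^*M(1,4), H^*M(1,4))$. In this refinement, the $v_2^{32}$ class detecting the $\tmf$-level self map of $\tmf\wedge M(1,4)$ sits in an explicit filtration, and one produces $\tilde v_2^{32}$ by lifting it through the spectral sequence; this also pins down the Adams filtration $s$ of the final class.

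The main obstacle is (ii): ruling out all Adams differentials $d_r(\tilde v_2^{32})$ for $r\ge 2$. Because the stem is $192$, this demands control of $\Ext_{\sA}^{s',s'+191}$ in a wide range of filtrations above that of $\tilde v_2^{32}$. The plan of attack is to use the $\sA(2)$-based refinement systematically: many potential target groups are controlled by the $\tmf$-cohomology calculation, where differentials are already known, and those that are not are handled via explicit $\sA(1)$-module computations, exploiting vanishing lines and the $\tmf$-module structure on the input of the refinement. I expect this bookkeeping of possible targets and differentials, rather than any single conceptual step, to be the hard part of the paper.

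Once the class survives and produces a map $v\colon\Sigma^{192}M(1,4)\to M(1,4)$, a final verification via $BP_*$ or $K(2)_*$-localization that $v_*$ acts as a unit multiple of $v_2^{32}$ on the corresponding Morava module promotes $v$ to a genuine $v_2$-self map, using the Nilpotence and Periodicity Theorems cited in the introduction. This last verification is essentially formal given the filtration data produced in step (i) together with the known behavior of the map in $\tmf$-cohomology.
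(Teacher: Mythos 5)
Your overall strategy---detect the class in an Adams-type spectral sequence, refine the $E_2$-page by an $A(2)$-based (algebraic $\tmf$) resolution, and rule out differentials using the known $\tmf$ computation together with vanishing lines---does match the paper in outline, but there are genuine gaps. First, your starting point is incorrect: $H^*M(1,4)$ is a four-dimensional $A$-module (cells in degrees $0,1,9,10$), not $A/\!/A(1)$, which is infinite dimensional (it is $H^*(ko)$); you may be conflating $M(1,4)$ with the spectrum $A_1$ mentioned in the paper, whose cohomology is free of rank one over $A(1)$. Consequently the change-of-rings simplification of your $E_2$-page is unavailable. Moreover, the paper does not work in the classical Adams spectral sequence for self-maps of $M(1,4)$: because the $v_1^4$-self-map has Adams filtration $4$, it builds a \emph{modified} Adams spectral sequence whose $E_2$-term is $\Ext_{A_*}(H(1,4)\otimes DH(1,4))$, where $H(1,4)$ is the cofiber of $v_1^4$ in the derived category of $A_*$-comodules, and it reduces via the Davis--Mahowald splitting to producing a suitable class in $\pi_{192}(M(1,4)\wedge DM(1))$ (a two-cell, rather than four-cell, smash factor). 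This modification is not cosmetic: it is what makes $v_2^{8}$ visible as a well-structured element of the $E_2$-term and what makes the algebraic $\tmf$-resolution and its vanishing lines applicable.

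More seriously, the step you defer as ``bookkeeping'' is exactly where the content of the paper lies, and your proposal supplies no mechanism for it. The low differentials are not handled by inspecting target groups: the paper shows $d_2(v_2^{8})$ and $d_3(v_2^{16})$ are \emph{central} (pulled back from $\Ext_{A_*}(\FF_2)$), so that multiplicativity of the modified spectral sequence forces $d_2(v_2^{16})=0$ and $d_3(v_2^{32})=0$. For $d_r$ with $r\ge 4$, the vanishing lines confine all possible targets to classes detected either by $\Ext_{A(2)_*}$ (which are permanent cycles by comparison with the completely computed spectral sequence for $\tmf\wedge M(1,4)$) or by $g^{6}$-multiples in higher algebraic $\tmf$-filtration; the decisive new input is the differential $d_3(\td{v_2^{20}h_1})=\td{g}^6+R$, which shows $\td{g}^6$ dies at $E_4$ and hence eliminates these remaining targets. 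Without the centrality argument and this $g^6$ differential (or substitutes for them), knowing the $\tmf$-level differentials and general vanishing lines does not suffice to rule out $d_r(v_2^{32})$, so your step (ii) remains unproven. Your final remark on upgrading to a genuine $v_2^{32}$-self map is essentially right, though the paper does it more simply: it is enough that the map induce multiplication by $v_2^{32}$ in $\tmf$-homology, since the ring map $\tmf\rightarrow K(2)$ carries the periodicity generator to $v_2^{32}$.
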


\begin{cor}
At the prime $2$, the complex $M(1,4,32)$ is minimal.
\end{cor}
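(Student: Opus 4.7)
The corollary has two halves. Existence of $M(1,4,32)$ is exactly Theorem \ref{thm:mainthm}. The content of ``minimal'' is that for each $k<32$, no $v_2^k$-self map $\Sigma^{6k}M(1,4)\to M(1,4)$ exists; my plan is to extract this obstruction from the Adams--Novikov/descent spectral sequence for $\tmf$, following the strategy already foreshadowed in the introduction's reference to \cite{HopkinsMahowald}, \cite{Bauer}.

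Suppose toward contradiction that $f\colon \Sigma^{6k}M(1,4)\to M(1,4)$ is a $v_2^k$-self map with $1\le k<32$. Smashing with $\tmf$ produces a $\tmf$-linear self-map $1\wedge f$ of $\tmf\wedge M(1,4)$. By the defining property of a $v_2$-self map (its effect on $BP_\ast$-homology is multiplication by $v_2^k$ modulo nilpotents), the class of $1\wedge f$ in the $\tmf$-Hurewicz image of $[M(1,4),M(1,4)]_{6k}$, when viewed in the descent spectral sequence
\[
H^\ast(\mc{M}_{ell};\omega^{\otimes\ast/2})\Rightarrow \pi_\ast\tmf
\]
smashed with $M(1,4)$, must be detected by a unit multiple of $v_2^k$ on the $E_2$-page. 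In particular $v_2^k$ would have to be a permanent cycle in the descent spectral sequence computing $\pi_\ast(\tmf\wedge M(1,4))$.

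The next step is to rule this out. I would reduce to $\tmf$ itself using the two cofiber sequences
\[
\Sigma\tmf\xrightarrow{2}\tmf\to\tmf\wedge M(1),\qquad \Sigma^9\tmf\wedge M(1)\xrightarrow{v_1^4}\tmf\wedge M(1)\to\tmf\wedge M(1,4),
\]
which give a finite cell filtration of $\tmf\wedge M(1,4)$ whose subquotients are suspensions of $\tmf$. The differentials on any class $v_2^k\in E_2(\tmf\wedge M(1,4))$ are therefore determined by the differentials on $v_2^k\in E_2(\tmf)$, together with possible cell-to-cell corrections whose sources also live in copies of $E_2(\tmf)$. Invoking the Hopkins--Mahowald--Bauer computation of the descent spectral sequence for $\tmf$ at the prime $2$, the classes $v_2,v_2^2,v_2^4,v_2^8,v_2^{16}$ each support a nontrivial differential (the decisive one being the $d_{23}$-differential on $v_2^{16}$ landing on a class in filtration $23$), so $v_2^{32}$ is the first power to survive. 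Transporting these differentials through the cell filtration shows $v_2^k$ cannot be a permanent cycle in $E_\infty(\tmf\wedge M(1,4))$ for $k<32$, contradicting the existence of $f$.

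The main obstacle is the last step: one must check that none of the lower powers of $v_2$ in $\tmf_\ast M(1,4)$ can become permanent cycles merely because the extra $2$- and $v_1^4$-torsion cells open up room to kill the differential targets inherited from $\tmf$. This requires actually running the cell-filtration argument and bookkeeping which Bauer-page targets of the $v_2$-differentials in $\tmf$ remain nonzero after quotienting by $(2,v_1^4)$; once that is verified, the contradiction follows and $M(1,4,32)$ is minimal.
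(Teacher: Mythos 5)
Your overall strategy --- existence from Theorem~\ref{thm:mainthm}, minimality by showing that no $v_2^k$ with $k<32$ can be a permanent cycle in a $\tmf$-based spectral sequence for $M(1,4)$ --- is the one the paper implicitly relies on when it cites \cite{HopkinsMahowald} and \cite{Bauer}. But as written your argument has two genuine gaps. The step you defer at the end (``running the cell-filtration argument and bookkeeping'') is not a routine verification: it is the entire mathematical content of the minimality claim. The targets of the differentials on $\Delta^j = v_2^{4j}$ in the descent spectral sequence for $\tmf$ can perfectly well become divisible by $2$ or by $c_4=v_1^4$ and hence die after smashing with $M(1,4)$, so nothing a priori prevents some $v_2^{8j}$ with $j<4$ from surviving in $\tmf\wedge M(1,4)$ even though it dies for $\tmf$. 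Deciding that this does not happen is exactly the complete computation of the modified Adams spectral sequence for $\tmf_*M(1,4)$ carried out in Section~\ref{sec:tmfM14}, culminating in the gap in stems $181$--$191$ which forces the minimal periodicity of $\tmf_*M(1,4)$ to be $192=\lvert v_2^{32}\rvert$. A proof of the corollary has to include that computation (or an equivalent one); it cannot be left as a ``to be verified.''

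Separately, your list of exponents to exclude is off in both directions. On one side, $v_2$ and $v_2^2$ are not classes on the $E_2$-page of the descent spectral sequence (only $\Delta^j=v_2^{4j}$ and its decorated variants exist there), so ``$v_2$ and $v_2^2$ support differentials'' is not a meaningful statement; for $k$ not divisible by $4$ one must instead argue that there is no invariant candidate class at all. On the other side, and more seriously, you must exclude \emph{every} $k<32$, not just the powers of $2$ you list. A hypothetical $v_2^{24}$-self map (or $v_2^{12}$, $v_2^{20}$, $v_2^{28}$) does not reduce to your cases: its iterates only produce exponents $24, 48, \dots$, and composing with the known $v_2^{32}$-self map never manufactures a $v_2^{8}$- or $v_2^{16}$-self map. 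Each such exponent has to be killed directly, which is again supplied by the full computation of Section~\ref{sec:tmfM14}, where $v_2^{24}$ and its multiples are shown to support differentials.
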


\begin{rmk}
A $v_2^{32}$-self map is, by definition, a map $v$ whose induced map
$$ v_*: K(2)_*M(1,4) \rightarrow K(2)_*M(1,4) $$
is given by multiplication by $v_2^{32}$.  In particular, the map $v$, 
and all of
its iterates, must be essential.  
Since there is a map of ring spectra
$$ \tmf \rightarrow K(2) $$
under which the periodicity generator $v_2^{32} \in \pi_{192}(\tmf_2)$ maps
to $v_2^{32} \in \pi_{192}K(2)$,
to prove
Theorem~\ref{thm:mainthm}, it suffices to prove that there exists a
self-map $v$ such that
$$ v_*: \tmf_*M(1,4) \rightarrow \tmf_*M(1,4) $$
is given by multiplication by $v_2^{32}$.  
\end{rmk}

\begin{rmk} 
The fourth author reports that methods similar to those described in this 
paper
show that the spectra $A_1$ and $M(2,4)$ also admit $v_2^{32}$-self
maps.  Here, $A_1$ is a spectrum whose cohomology is a free module 
of rank $1$ over
the subalgebra $A(1)$ of the Steenrod algebra (see \cite{DavisMahowald}).
\end{rmk}

The self-map of Theorem~\ref{thm:mainthm} produces many $v_2^{32}$-periodic
infinite families of elements in the stable homotopy groups of spheres.
These families are discussed in detail in \cite{HopkinsMahowald}.  In fact,
all of the results of \cite{DavisMahowald} and \cite{Mahowaldv2} concerning
$v_2$-periodic families are valid with $v_2^8$ replaced by $v_2^{32}$.

\subsection*{Organization of the paper}

In Section~\ref{sec:Moore}, we reduce
Theorem~\ref{thm:mainthm} to showing that there exists a homotopy element
$$ v \in \pi_{192}(M(1,4) \wedge DM(1)) $$
with Hurewitz image $v_2^{32} \in \tmf_{192}(M(1,4) \wedge DM(1))$.
Here, $DM(1)$ is the Spanier-Whitehead dual of the spectrum $M(1)$.

In Section~\ref{sec:MASS} we construct modified Adams spectral
sequences (MASSs) of the form
\begin{align}
\Ext^{s,t}_{A_*}(\FF_2, H(1,4) \otimes DH(1,4)) & \Rightarrow
\pi_{t-s}(M(1,4) \wedge DM(1,4)), \label{eq:M14DM14} \\
\Ext^{s,t}_{A_*}(\FF_2, H(1,4) \otimes H_*(X)) & \Rightarrow
\pi_{t-s}(M(1,4) \wedge X) \label{eq:M14}
\end{align}
where $A_*$ is the dual Steenrod algebra, 
$H(1,4)$ and $DH(1,4)$ are objects in the derived category of 
$A_*$-comodules, 
and $\Ext_{A_*}$ is a group of homomorphisms in the derived category. 
We show that (\ref{eq:M14DM14}) is a spectral sequence of algebras, and
that (\ref{eq:M14}) is a spectral sequence of modules over
(\ref{eq:M14DM14}).

In Section~\ref{sec:v2_8} we prove that there exists an element
$$ v_2^8 \in \Ext^{8,56}_{A_*}(\FF_2, H(1,4) \otimes DH(1,4)). $$

In Section~\ref{sec:BrownGitler}, we give a general
overview of the theory of generalized 
Brown-Gitler $A_*$-comodules $M_i(j)$. We describe a
spectral sequence which computes $\Ext_{A_*}$ in terms of $\Ext_{A(i)}$ of
tensor products of these comodules.  The case of interest is where $i = 2$,
and the spectral sequence is an algebraic version of the $\tmf$-resolution.

In Section~\ref{sec:Charts} we compute
$$ \Ext^{*,*}_{A(2)_*}(H(1,4) \otimes M_2(1)^{\otimes k}) $$
for $k \le 3$.

In Section~\ref{sec:reduction} we establish vanishing lines for the Ext
groups appearing in the algebraic $\tmf$-resolution.  These vanishing lines
imply that the only targets of a potential differential supported by
$v_2^{32}$ are detected in the algebraic $\tmf$-resolution by the Ext groups
computed in Section~\ref{sec:Charts}.

In Section~\ref{sec:tmfM14}, we completely compute the MASS for $\tmf
\wedge M(1,4)$.

In Section~\ref{sec:d2d3}, we show that in the MASS for $M(1,4) \wedge
DM(1,4)$, the differential $d_2(v_2^8)$ is central.  This allows us to
deduce that $d_2(v_2^{16}) = 0$.  We then argue that the differential
$d_3(v_2^{16})$ is central, which implies that $d_3(v_2^{32}) = 0$.  We
just need to show that $v_2^{32}$ is a permanent cycle.

In Section~\ref{sec:diff}, we show that $\bar{\kappa}^6$ is killed in the
$E_3$-term of the MASS for $M(1,4) \wedge DM(1,4)$.

In Section~\ref{sec:mainthm}, we prove the main theorem.  We identify
possible targets of $d_r(v_2^{32})$ in the MASS for $M(1,4) \wedge DM(1)$
using the results of Sections~\ref{sec:Charts} and \ref{sec:reduction}, and
then eliminate these possibilities using the differentials computed in
Section~\ref{sec:tmfM14} and \ref{sec:diff}.

\subsection*{Conventions}

In this paper we shall always be implicitly working in the stable homotopy
category localized at the prime $2$.
All homology and cohomology groups in this paper are implicitly taken with
$\FF_2$ coefficients.

\subsection{Acknowledgments}

The authors would like to thank Robert R. Bruner for making his $\Ext$
software available.  The
first author completed some of this project while visiting Northwestern
University and Harvard University, and is grateful for the hospitality of
these institutions.  The authors also
express their thanks to the referee, for pointing out a gap in an
earlier version of this paper.

\section{Generalized Moore spectra}\label{sec:Moore}

Let $M(1)$ be the mod $2$ Moore spectrum.  There are many $v_1$-self-maps
$$ \Sigma^8 M(1) \rightarrow M(1), $$
however, low dimensional calculations indicate that there is precisely one with 
Adams filtration $4$.  We shall call this
map $v_1^4$, and its cofiber will be denoted $M(1,4)$.

It is useful to regard the desired self-map $v$ of Theorem~\ref{thm:mainthm} 
as an element of the homotopy group
$\pi_{192}(M(1,4) \wedge DM(1,4))$.
The proof of the theorem is simplified by the following splitting result.

\begin{prop}[Davis-Mahowald
{\cite[Lem~3.2]{DavisMahowald}}]\label{prop:DavisMahowald}
The projection
$$ M(1,4) \wedge DM(1,4) \rightarrow M(1,4) \wedge DM(1) $$
is a split surjection.
\end{prop}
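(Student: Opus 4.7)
The plan is to rewrite the split surjectivity as a single null-homotopy and then verify it. Applying Spanier-Whitehead duality to the defining cofiber sequence $\Sigma^8 M(1) \xrightarrow{v_1^4} M(1) \to M(1,4)$ yields the cofiber sequence
$$ \Sigma^{-9} DM(1) \to DM(1,4) \to DM(1) \xrightarrow{\delta} \Sigma^{-8} DM(1), $$
where $\delta = D(v_1^4)$. Smashing with $M(1,4)$ extends this to the cofiber sequence
$$ M(1,4) \wedge DM(1,4) \xrightarrow{p} M(1,4) \wedge DM(1) \xrightarrow{1 \wedge \delta} \Sigma^{-8} M(1,4) \wedge DM(1), $$
so $p$ is a split surjection if and only if the boundary $1_{M(1,4)} \wedge \delta$ is null-homotopic.

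To produce this null-homotopy, I would use the self-duality $DM(1) \simeq \Sigma^{-1} M(1)$ to identify $\delta$ (up to suspension and sign) with $v_1^4$, reducing the question to vanishing of the map
$$ 1_{M(1,4)} \wedge v_1^4: \Sigma^8 M(1,4) \wedge M(1) \to M(1,4) \wedge M(1). $$
Then I would exploit the splitting $M(1) \wedge M(1) \simeq M(1) \vee \Sigma M(1)$, which, combined with the defining cofiber sequence of $M(1,4)$ smashed with $M(1)$, produces a decomposition $M(1,4) \wedge M(1) \simeq M(1,4) \vee \Sigma M(1,4)$. Under this splitting, the obstruction becomes a pair of degree-$8$ self-maps of $M(1,4)$, and one argues these are null by using that $v_1^4$ is killed upon passage to $M(1,4)$ (consecutive maps in a cofiber sequence compose to zero) together with Adams-filtration considerations controlling the residual indeterminacy in the relevant low-dimensional range.

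The main obstacle will be the explicit cellular verification in the last step: the bookkeeping of signs, suspensions, and the interaction of the decomposition $M(1) \wedge M(1) \simeq M(1) \vee \Sigma M(1)$ with the $v_1^4$-action all require careful handling. Rather than reproving this from scratch, I would simply invoke Davis-Mahowald \cite[Lem~3.2]{DavisMahowald}, whose argument implements essentially this strategy; alternatively one could construct a section directly by producing a lift in $\pi_0(M(1,4) \wedge DM(1,4))$ of the canonical class in $\pi_0(M(1,4) \wedge DM(1))$ coming from $M(1) \to M(1,4)$, and controlling uniqueness by filtration bookkeeping.
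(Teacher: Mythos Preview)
The paper does not supply its own proof of this proposition: it is stated with attribution to Davis--Mahowald \cite[Lem~3.2]{DavisMahowald} and no argument is given. Your proposal, after sketching a plausible reduction, likewise defers to the same reference, so in that sense you match the paper exactly.

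The outline you give before the deferral---dualize the defining cofiber sequence, smash with $M(1,4)$, and reduce split surjectivity to the null-homotopy of $1_{M(1,4)} \wedge v_1^4$ on $M(1,4) \wedge M(1)$---is indeed the shape of the Davis--Mahowald argument, and your honest flagging of the bookkeeping in the final step is appropriate: that step genuinely requires the low-dimensional control carried out in their paper and is not a formality. Since the present paper treats the result as a black box, your proposal is adequate for the purposes here.
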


\begin{cor}\label{cor:DavisMahowald}
An element $x \in \pi_{k}(M(1,4))$ extends to a self-map
$$ \td{x} : \Sigma^k M(1,4) \rightarrow M(1,4) $$
if and only if $2x = 0$.
\end{cor}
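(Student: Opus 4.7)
The plan is to reduce both directions of the corollary to understanding extensions along the bottom cell inclusion $i: \Sigma^k S \to \Sigma^k M(1,4)$, exploiting that $i$ factors through the $2$-cell subcomplex $\Sigma^k M(1) \subset \Sigma^k M(1,4)$. Via Spanier-Whitehead duality, self-maps $\Sigma^k M(1,4) \to M(1,4)$ correspond to elements of $\pi_k(M(1,4) \wedge DM(1,4))$, while maps $\Sigma^k M(1) \to M(1,4)$ correspond to elements of $\pi_k(M(1,4) \wedge DM(1))$, and the projection of Proposition~\ref{prop:DavisMahowald} realizes precomposition with the inclusion $M(1) \hookrightarrow M(1,4)$ under this dictionary.

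The ``only if'' direction is essentially formal. I would observe that the composite $S \xrightarrow{2} S \to M(1)$ is null by the defining cofiber sequence of $M(1)$, so $2i = 0$ in $[\Sigma^k S, \Sigma^k M(1,4)]$; hence whenever $x = \td{x} \circ i$, we have $2x = \td{x} \circ (2i) = 0$.

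For the ``if'' direction I use the splitting. Given $x$ with $2x = 0$, I would first invoke the cofiber sequence $\Sigma^k S \xrightarrow{2} \Sigma^k S \to \Sigma^k M(1)$ to extend $x$ to a map $x': \Sigma^k M(1) \to M(1,4)$, which I then regard as an element of $\pi_k(M(1,4) \wedge DM(1))$. I would then apply Proposition~\ref{prop:DavisMahowald} to lift $x'$ to an element of $\pi_k(M(1,4) \wedge DM(1,4))$, i.e., a self-map $\td{x}: \Sigma^k M(1,4) \to M(1,4)$. Because the splitting is a section of the map induced by the bottom cell inclusion, the composite $\Sigma^k M(1) \hookrightarrow \Sigma^k M(1,4) \xrightarrow{\td{x}} M(1,4)$ recovers $x'$, and restricting further to $\Sigma^k S$ recovers $x$. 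There is no real obstacle once Proposition~\ref{prop:DavisMahowald} is in hand; the only thing one must verify is that the splitting respects the bottom cell inclusion, which is immediate from its construction as a section of the indicated projection.
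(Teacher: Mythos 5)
Your argument is correct and is exactly the intended deduction: the paper leaves the corollary as an immediate consequence of Proposition~\ref{prop:DavisMahowald}, and your proof spells out precisely that route (the formal ``only if'' via the cofiber sequence $S \xrightarrow{2} S \to M(1)$, and the ``if'' by extending over $\Sigma^k M(1)$ and then lifting through the splitting, which under Spanier--Whitehead duality is a section of restriction along $M(1) \hookrightarrow M(1,4)$). Nothing further is needed.
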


To prove Theorem~\ref{thm:mainthm}, it therefore suffices to construct an 
appropriate element 
$v' \in \pi_{192}(M(1,4) \wedge DM(1))$.

\section{Modified Adams spectral sequences}\label{sec:MASS}

For a graded Hopf algebra $\Gamma$ over a field $k$, let $\mc{D}_{\Gamma}$
denote the derived category of $\Gamma$-comodules.  
For objects $M$ and $N$
of $\mc{D}_\Gamma$, we define groups
$$ \Ext^{s,t}_{\Gamma}(M,N) = \mc{D}_{\Gamma}(\Sigma^t M, N[s]) $$
as a group of maps in the derived category.  Here $\Sigma^t M$ denotes the
$t$-fold shift with respect to the internal grading of $M$, and $N[s]$ denotes
the $s$-fold shift with respect to the triangulated structure of
$\mc{D}_\Gamma$.  This
reduces to the usual definition of $\Ext_\Gamma$ when $M$ and $N$ are
$\Gamma$-comodules.  We shall frequently use the abbreviation
$$ \Ext^{*,*}_{\Gamma}(M) := \Ext^{*,*}_{\Gamma}(k,M). $$
For a left $\Gamma$-comodule $M$ and a right $\Gamma$-comodule $N$, let
$C^*(N,\Gamma, M)$ denote the reduced cobar complex with
$$ C^s(N, \Gamma, M) = N \otimes \br{\Gamma}^{\otimes s} \otimes M. $$
Here, $\br{\Gamma}$ is the cokernel of the unit $k \rightarrow
\Gamma$.  Then $C^*(\Gamma, \Gamma, M)$ is an injective resolution for $M$
in the category of $\Gamma$-comodules, and
$$ \Ext^{s,t}_\Gamma(M) = H^s(C^*(k, \Gamma, M))_t. $$
We refer the reader to \cite[Appendix~1]{Ravenel} for details.

Let $A_*$ denote the dual Steenrod algebra.
Let $H(1) = H_*(M(1))$ be the homology of the mod $2$ Moore spectrum.
There is a triangle in $\mc{D}_{A_*}$:
\begin{equation}\label{eq:H(1)cofiber}
\Sigma \FF_2[-1] \xrightarrow{h_0} \FF_2 \rightarrow H(1) \rightarrow
\Sigma \FF_2.
\end{equation}
Let $v_1^4: \Sigma^{12} H(1)[-4] \rightarrow H(1)$ be the unique non-zero 
element of
$\Ext^{4,12}_{A_*}(H(1), H(1))$, which detects the $v_1$-self map of $M(1)$ in
Adams filtration $4$.
Let $H(1,4)$ denote the cofiber
\begin{equation*}\label{eq:H(1,4)cofiber}
\Sigma^{12} H(1)[-4] \xrightarrow{v_1^4} H(1) \rightarrow H(1,4)
\rightarrow \Sigma^{12} H(1)[-3].
\end{equation*}
Let 
$$ DM(1,4) = F(M(1,4), S) \simeq \Sigma^{-10} M(1,4) $$ 
denote the Spanier-Whitehead dual of $M(1,4)$, and let 
$$ DH(1,4) = \Hom_{\FF_2}(H(1,4), \FF_2) \cong \Sigma^{-13} H(1,4)[3] $$
denote the corresponding object in $\mc{D}_{A_*}$.

\begin{prop}\label{prop:MASS}
For a finite complex $X$, there are \emph{modified Adams spectral sequences}
(MASSs) of the form:
\begin{gather*}
E_2^{s,t}(M(1,4) \wedge X) = \Ext^{s,t}_{A_*}(H(1,4) \otimes 
H_*(X)) 
\Rightarrow
\pi_{t-s}(M(1,4)\wedge X),
\\
E_2^{s,t}(M(1,4) \wedge DM(1,4)) = 
\Ext^{s,t}_{A_*}(H(1,4) \otimes DH(1,4)) 
\Rightarrow
\pi_{t-s}(M(1,4) \wedge DM(1,4)).
\end{gather*}
\end{prop}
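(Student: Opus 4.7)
The plan is to construct the MASS from a filtration on $M(1,4) \wedge X$ which refines the classical Adams filtration by recording the Adams filtrations of the attaching maps $2$ and $v_1^4$ out of which $M(1,4)$ is built. The key point is that the $E_1$-page of a modified Adams resolution of this type will read off the iterated-cofiber cobar complex of the derived-category object $H(1,4) \otimes H_*(X)$ rather than the cobar complex of the underlying comodule $H_*(M(1,4) \wedge X)$.

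Concretely, I would begin with the canonical Adams resolution $\cdots \to W_2 \to W_1 \to W_0 = X$ of $X$, whose associated graded is the reduced cobar complex $C^*(A_*, H_*(X))$. Because $2 \in \pi_0(S)$ lies in Adams filtration at least one, it may be lifted to a map of towers $\td{2} \colon W_\bullet \to W_{\bullet+1}$; the levelwise cofibers
$$ F^s(M(1) \wedge X) := \mathrm{cof}\bigl( W_{s-1} \xrightarrow{\td{2}} W_s \bigr) $$
then form a tower whose associated graded, by the octahedral axiom applied to (\ref{eq:H(1)cofiber}), identifies with $C^*(A_*, H(1) \otimes H_*(X))$. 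Iterating the same procedure using an Adams-filtration-four lift of $v_1^4$ to a self-map of the filtered spectrum $\{F^\bullet(M(1) \wedge X)\}$ yields a tower $\{F^s(M(1,4) \wedge X)\}$ with associated graded $C^*(A_*, H(1,4) \otimes H_*(X))$. Taking cohomology gives the claimed $E_2$-page $\Ext^{s,t}_{A_*}(H(1,4) \otimes H_*(X))$.

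For convergence, the tower $\{F^s(M(1,4) \wedge X)\}$ is cofinal with the standard Adams tower of the finite, finite-type complex $M(1,4) \wedge X$, so the standard Adams convergence theorem applies. The MASS for $M(1,4) \wedge DM(1,4)$ is obtained either by specializing the first construction to $X = DM(1,4)$, or independently from the same filtration scheme, and the identification of its $E_2$-page is packaged by the derived-category equivalence $DH(1,4) \cong \Sigma^{-13} H(1,4)[3]$ coming from Spanier-Whitehead duality on the $A_*$-comodule level.

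The main technical obstacle is verifying that the filtration-one lift of $2$ and the filtration-four lift of $v_1^4$ can be chosen compatibly, so that the levelwise cofibers assemble into an honest tower rather than a weakly commutative diagram. This is a standard obstruction-theoretic issue in constructing Adams-type spectral sequences, and is resolved by carrying out the construction in a model category of filtered spectra (or equivalently, by inductively extending lifts stage-by-stage, with obstructions in Ext groups that vanish for dimensional reasons in the low-dimensional range relevant to the attaching maps of $M(1,4)$).
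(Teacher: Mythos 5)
Your construction of the first spectral sequence is essentially the paper's: modify the canonical Adams tower by taking levelwise cofibers along a filtration-shifting lift of the self-map (the paper starts from the canonical tower of $M(1)$ itself, so only the $v_1^4$ step needs modifying; building in the $2$ as a filtration-one cofiber is a cosmetic difference). The genuine gap is in your treatment of the second spectral sequence. Specializing the first construction to $X = DM(1,4)$ produces a spectral sequence with $E_2 = \Ext_{A_*}(H(1,4)\otimes H_*(DM(1,4)))$, and this is \emph{not} the asserted $E_2$-term: $H_*(DM(1,4))$ is an honest four-dimensional comodule, whereas $DH(1,4)$ is a genuinely derived object, the dual of the cone of $v_1^4\colon \Sigma^{12}H(1)[-4]\to H(1)$, whose two Moore-type pieces sit in different cohomological degrees. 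The equivalence $DH(1,4)\cong \Sigma^{-13}H(1,4)[3]$ you invoke relates two derived objects; it does not identify $DH(1,4)$ with the comodule $H_*(DM(1,4))$, and the corresponding Ext groups genuinely differ --- that difference is the entire point of the ``modification.'' What is needed, and what the paper does, is a modified Adams resolution of $DM(1,4)$ itself, built dually by taking homotopy fibers of a filtration-four lift $(D\td{v_1^4})_i$ acting on the function-spectrum tower $DM(1)_i = F(M(1), \br{H}^{\wedge i})$, followed by a smash product of the two modified resolutions (in the sense of the standard smash product of Adams resolutions) to obtain $E_2=\Ext_{A_*}(H(1,4)\otimes DH(1,4))$, with multiplicativity then coming along for free in Proposition~\ref{prop:MASSmult}. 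Your alternative phrase ``independently from the same filtration scheme'' gestures at this, but as written the argument for the second MASS does not go through.

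A smaller issue: identifying the associated graded of the levelwise-cofiber tower with a complex computing $\Ext_{A_*}(H(1,4)\otimes H_*X)$ is not automatic from the octahedral axiom. One needs the maps induced by the lift on the $E_1$ (cobar) level to be \emph{injective}, so that the homotopy of the cofibers is the cokernel complex --- a complex of extended comodules modeling the cone of $v_1^4$ --- rather than also picking up kernel contributions in adjacent degrees; the paper checks this using the fact that $v_1^4$ restricted to the bottom cell of $\Sigma^8 M(1)$ has Adams filtration exactly $4$. Finally, the strictification problem you raise at the end is not an obstacle in this setting: with the canonical resolution $M(1)_i = \br{H}^{\wedge i}\wedge M(1)$, a single lift $\td{v_1^4}\colon \Sigma^8 M(1)\to M(1)_4$ extends to a strict map of towers simply by smashing with $\br{H}^{\wedge i}$, so no obstruction theory is required.
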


\begin{proof}
Consider the canonical Adams resolution of $M(1)$:
$$
\xymatrix{
M(1) \ar@{=}[r] & M(1)_0 \ar[d] & M(1)_1 \ar[l] \ar[d] & M(1)_2 \ar[d]
\ar[l] & \ar[l] \cdots \\
& K(1)_0 & K(1)_1 & K(1)_2 
}
$$
where
\begin{align*}
M(1)_i & = \br{H}^{\wedge i} \wedge M(1), \\
K(1)_i & = H \wedge \br{H}^{\wedge i} \wedge M(1).
\end{align*}
Here $H$ denotes the Eilenberg-MacLane spectrum $H\FF_2$, and $\br{H}$
denotes the fiber of the unit $S \rightarrow H$.
Since the self-map $v_1^4: \Sigma^8 M(1) \rightarrow M(1)$ has Adams
filtration $4$, there exists a lift:
$$
\xymatrix{
& M(1)_4 \ar[d] \\
\Sigma^8 M(1) \ar@{.>}[ru]^{\td{v_1^4}} \ar[r]_{v_1^4} & M(1)
}
$$
The lift $\td{v_1^4}$ induces a map of Adams resolutions:
\begin{equation}\label{diag:aresmap}
\xymatrix{
\Sigma^8 M(1)_0 \ar[d]_{v_1^4} & \ar@{=}[l] \cdots & \ar@{=}[l] \Sigma^8 M(1)_0
\ar[d]_{(\td{v_1^4})_0} & \Sigma^8 M(1)_1 \ar[l]
\ar[d]_{(\td{v_1^4})_1} & \cdots \ar[l] \\
M(1)_0 & \ar[l] \cdots & \ar[l] M(1)_4 & \ar[l] M(1)_5 & \cdots \ar[l]
}
\end{equation}
where the maps $(\td{v_1^4})_i$ are given by
$$ (\td{v_1^4})_i : \Sigma^8 M(1)_i = \Sigma^8 \br{H}^{\wedge i} \wedge
M(1) \xrightarrow{1 \wedge \td{v_1^4}} \br{H}^{\wedge i} \wedge
\br{H}^{\wedge 4} \wedge M(1) = M(1)_{i+4}.
$$
The mapping cones of the vertical maps of (\ref{diag:aresmap})
$$ \Sigma^8 M(1)_{i-4} \xrightarrow{(\td{v_1^4})_i} M(1)_i \rightarrow
M(1,4)_i $$
form a resolution:
$$
\xymatrix{
M(1,4) \ar@{=}[r] & M(1,4)_0 \ar[d] & M(1,4)_1 \ar[l] \ar[d] 
& \ar[l] M(1,4)_2 \ar[d] & \ar[l] \cdots \\
& K(1,4)_0 & K(1,4)_1 & K(1,4)_2 
}
$$
Smashing this resolution with $X$, we obtain a spectral sequence
\begin{equation}\label{eq:MASS1}
E_1^{s,t}(M(1,4) \wedge X) = \pi_{t-s}(K(1,4)_s \wedge X) \Rightarrow \pi_{t-s}(M(1,4)
\wedge X).
\end{equation}

By the $3 \times 3$ Lemma, the cofibers $K(1,4)_i$ fit into cofiber
sequences
\begin{equation}\label{eq:Kcofiber}
\Sigma^8 K(1)_{i-4} \xrightarrow{(\br{v_1^4})_i} K(1)_i \rightarrow
K(1,4)_i.
\end{equation}
Here we take $K(1)_{i-4} = \ast$ if $i < 4$, and $(\br{v_1^4})_i$ is the map
induced by smashing $(\td{v_1^4})_i$ with $H$.

Using the $A_*$-comodule structure of $H(1)$ together with the fact
that the composite
$$ S^8 \hookrightarrow \Sigma^8 M(1) \xrightarrow{v_1^4} M(1) $$
has Adams filtration $4$, one may easily check that the map
$$ \Sigma^{12} H(1) = \Sigma^{12} \pi_*K(1)_0 \xrightarrow{(\br{v_1^4})_0}
\Sigma^4 \pi_*K(1)_4 =
C^4(\FF_2, A_*, H(1)) $$
is injective.  It follows that the maps
$$ \Sigma^{12} C^{i-4}(\FF_2, A_*, H(1)) = \Sigma^{8+i}\pi_*K(1)_{i-4} 
\xrightarrow{(\br{v_1^4})_i}
\Sigma^{i} \pi_*K(1)_i = C^{i}(\FF_2, A_*, H(1)) $$
are injective for all $i$.  We conclude that the cofiber sequences
(\ref{eq:Kcofiber}) give rise to short exact sequences
\begin{multline*}
0 \rightarrow \Sigma^{12} C^{i-4}(\FF_2, A_*, H(1) \otimes H_*X)
\xrightarrow{(\br{v_1^4})_i} C^{i}(\FF_2, A_*, H(1) \otimes H_*X) 
\\
\rightarrow
\Sigma^i\pi_*(K(1,4)_i \wedge X) \rightarrow 0.
\end{multline*}
In the derived category $D_{A_*}$ we have a map of triangles:
$$
\xymatrix{
\Sigma^{12} C^{*-4}(A_*, A_*, H(1)) \ar[d]^{\simeq} \ar[r]^-{(\br{v_1^4})_*}
& C^*(A_*, A_*, H(1)) \ar[d]^\simeq \ar[r]
& Q(1,4)_* \ar[d]^\simeq
\\
\Sigma^{12}H(1)[-4] \ar[r]_{v_1^4} & H(1) \ar[r] & H(1,4) 
}
$$
where $Q(1,4)_i$ is the cokernel of the inclusion
$$ \Sigma^{12} C^{i-4}(A_*, A_*, H(1)) \xrightarrow{(\br{v_1^4})_i}
C^i(A_*, A_*, H(1)). $$
Since we have isomorphisms of cochain complexes
$$ \pi_{*}(K(1,4)_* \wedge X) \cong \Hom_{A_*}(\FF_2, Q(1,4)_* \otimes
H_*X), $$
we deduce that the $E_2$-term of the spectral sequence (\ref{eq:MASS1}) is
given by
$$ E_2^{s,t}(M(1,4) \wedge X) = \Ext^{s,t}_{A_*}(H(1,4)\otimes H_*X). $$

Consider the Adams resolution for the Spanier-Whitehead dual $DM(1)$:
$$
\xymatrix{
DM(1) \ar@{=}[r] & DM(1)_0 \ar[d] & DM(1)_1 \ar[l] \ar[d] & DM(1)_2 \ar[d]
\ar[l] & \ar[l] \cdots \\
& KD(1)_0 & KD(1)_1 & KD(1)_2 
}
$$
where
\begin{align*}
DM(1)_i & = F(M(1), \br{H}^{\wedge i}), \\
KD(1)_i & = F(M(1), H \wedge \br{H}^{\wedge i}).
\end{align*}
Define maps 
$(D\td{v_1^4})_i$ to be the composites
\begin{multline*}
(D\td{v_1^4})_i: DM(1)_i = F(M(1), \br{H}^{\wedge i}) \xrightarrow{u} 
F(\br{H}^{\wedge 4} \wedge M(1), \br{H}^{\wedge i + 4})
\\
\xrightarrow{(\td{v_1^4})^*} F(\Sigma^8 M(1),
\br{H}^{\wedge i+4}) = \Sigma^{-8} DM(1)_{i+4} 
\end{multline*}
where $u$ is the unit of the adjunction.
These maps assemble to give a map of Adams resolutions:
\begin{equation*}\label{eq:Dares}
\xymatrix{
DM(1)_0 \ar[d]_{Dv_1^4} & \ar@{=}[l] \cdots & \ar@{=}[l] DM(1)_0
\ar[d]_{(D\td{v_1^4})_0} & M(1)_1 \ar[l]
\ar[d]_{(D\td{v_1^4})_1} & \cdots \ar[l] \\
\Sigma^{-8} DM(1)_0 & \ar[l] \cdots & \ar[l] \Sigma^{-8} DM(1)_4 & 
\ar[l] \Sigma^{-8} DM(1)_5 & \cdots \ar[l]
}
\end{equation*}
Letting $DM(1,4)_i$ denote the homotopy fibers of the vertical maps of 
(\ref{eq:Dares}): 
$$ DM(1,4)_i \rightarrow DM(1)_i \xrightarrow{(D\td{v_1^4})_i} \Sigma^{-8}
DM(1)_{i+4}, $$
we obtain a modified Adams resolution of $DM(1,4)$:
$$
\xymatrix{
DM(1,4) \ar@{=}[r] & DM(1,4)_{-4} \ar[d] & DM(1,4)_{-3} \ar[l] \ar[d] &
DM(1,4)_{-2} \ar[d]
\ar[l] & \ar[l] \cdots \\
& KD(1,4)_{-4} & KD(1,4)_{-3} & KD(1,4)_{-2} 
}
$$
and a corresponding modified Adams spectral sequence
$$ E^{s,t}_2(DM(1,4)) = \Ext^{s,t}_{A_*}(DH(1,4)) \Rightarrow
\pi_{t-s}(DM(1,4)). $$

By taking iterated
mapping cylinders, we may assume that the maps
\begin{gather*}
M(1,4)_{i+1} \rightarrow M(1,4)_i, \\
DM(1,4)_{i+1} \rightarrow DM(1,4)_i
\end{gather*}
are inclusions of subcomplexes.
Taking the smash product of resolutions \cite[Ch.~IV, Def.~4.2]{Hinfty}
$$ \{ (M(1,4) \wedge DM(1,4))_i \} = \{ M(1,4)_i \} \wedge \{ DM(1,4)_i \} $$
gives the spectral sequence
$$ \Ext_{A_*}^{s,t}(H(1,4) \otimes DH(1,4)) \Rightarrow \pi_{t-s}(M(1,4) \wedge
DM(1,4)). $$
\end{proof}

\begin{prop}\label{prop:MASSmult}
The spectral sequence $\{ E_r(M(1,4) \wedge DM(1,4)) \}$ is a spectral
sequence of algebras, and the spectral sequence $\{ E_r(M(1,4) \wedge X)\}$
is a spectral sequence of modules over $\{ E_r(M(1,4) \wedge DM(1,4)) \}$.
\end{prop}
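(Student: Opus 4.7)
The plan is to exhibit filtered pairings of the modified Adams resolutions constructed in the proof of Proposition~\ref{prop:MASS} that cover the ring multiplication on $M(1,4) \wedge DM(1,4)$ and its action on $M(1,4) \wedge X$. Once this is done, the multiplicativity of the first spectral sequence and the module structure of the second follow from the standard smash-product-of-resolutions machinery recalled in \cite[Ch.~IV]{Hinfty}.

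First, I would identify the relevant pairings at the level of spectra. Since $M(1,4) \wedge DM(1,4) \simeq F(M(1,4), M(1,4))$, it is a ring spectrum whose multiplication factors as $1 \wedge e \wedge 1$, where $e \colon DM(1,4) \wedge M(1,4) \to S$ is the evaluation; the module action on $M(1,4) \wedge X$ has the analogous form. Thus it suffices to verify that the evaluation $e$ lifts to pairings
\[
DM(1,4)_j \wedge M(1,4)_i \to \br{H}^{\wedge(i+j)}
\]
sending the $(i,j)$-piece of the smash product filtration into the $(i+j)$-th stage of the Adams tower of $S$.

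Second, I would verify this by reducing to the unmodified case. For the ordinary Adams resolutions of $M(1)$ and $DM(1)$ the pairing
\[
F(M(1), \br{H}^{\wedge j}) \wedge \br{H}^{\wedge i} \wedge M(1) \to \br{H}^{\wedge(i+j)}
\]
is immediate from the definitions, and is precisely the source of multiplicativity of the classical Adams spectral sequence for $F(M(1),M(1))$. Because the maps $(D\td{v_1^4})_i$ were defined in the proof of Proposition~\ref{prop:MASS} by adjunction from $(\td{v_1^4})_i$, the two families are Spanier-Whitehead dual, so $e$ is compatible with both modifications and descends to the required pairings on the mapping cones $M(1,4)_i$ and mapping fibers $DM(1,4)_j$.

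The main obstacle, such as it is, is the coherence check that the descent of $e$ to the modified resolutions is well defined: concretely, one must verify that the square expressing the duality of $(\td{v_1^4})_i$ and $(D\td{v_1^4})_j$ under $e$ commutes up to homotopy. This holds by construction, since $(D\td{v_1^4})_i$ was built from $(\td{v_1^4})_i$ via the unit of the $\wedge$--$F$ adjunction. With this in hand, the module structure on $\{E_r(M(1,4) \wedge X)\}$ is handled by the same argument applied to the evaluation pairing $(M(1,4) \wedge DM(1,4)) \wedge (M(1,4) \wedge X) \to M(1,4) \wedge X$.
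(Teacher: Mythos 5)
Your proposal is correct and follows essentially the same route as the paper: the paper likewise realizes the multiplication as $1 \wedge e \wedge 1$ and constructs canonical evaluation pairings $DM(1,4)_i \wedge M(1,4)_j \to \br{H}^{\wedge(i+j)}$ (well defined because $(D\td{v_1^4})_i$ is adjoint to $(\td{v_1^4})_j$, so the mapping-cone and mapping-path modifications are compatible), then feeds these into the smash-product-of-resolutions machinery of \cite[Ch.~IV]{Hinfty}. Your explicit coherence check is exactly what the paper leaves implicit in calling the evaluation maps ``canonical.''
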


\begin{proof}
The canonical Adams resolution for the sphere spectrum is given by $\{
\br{H}^{\wedge i} \}$.  The canonical evaluation maps
\begin{multline*}
DM(1,4)_i \wedge M(1,4)_j = 
\\
(DM(1)_i \times_{(D\td{v_1^4})_i}
(\Sigma^{-8} DM(1)_{i+4})^I) \wedge (M(1)_j \cup_{(\td{v_1^4})_j} C\Sigma^8
M(1)_{j-4} \rightarrow \br{H}^{\wedge i+j}
\end{multline*}
induce maps of modified Adams resolutions
\begin{align*}
\{ (M(1,4) \wedge & DM(1,4))_i \} \wedge \{ (M(1,4) \wedge DM(1,4))_i \} 
\\
& = 
\{ M(1,4)_i \} \wedge \{ (DM(1,4) \wedge M(1,4))_i \} \wedge \{ DM(1,4)_i \}
\\
& \rightarrow 
\{ M(1,4)_i \} \wedge \{ \br{H}^{\wedge i} \} \wedge \{ DM(1,4)_i \}
\\
& = \{ (M(1,4) \wedge DM(1,4))_i \},
\end{align*}
\begin{align*}
\{ (M(1,4) \wedge & DM(1,4))_i \} \wedge \{ M(1,4)_i \wedge X \}
\\
& =
\{ M(1,4)_i \} \wedge \{ (DM(1,4) \wedge M(1,4))_i \} \wedge \{
\br{H}^{\wedge i} \wedge X \}
\\
& \rightarrow
\{ M(1,4)_i \} \wedge \{ \br{H}^{\wedge i} \} \wedge \{
\br{H}^{\wedge i} \wedge X \}
\\
& =
\{ M(1,4)_i \wedge X \}.
\end{align*}
These maps induce the desired pairings on the corresponding MASSs.
\end{proof}

\section{$v_2^8$-periodicity in $\Ext_{A_*}$}\label{sec:v2_8}

A similar (but easier) argument to Proposition~\ref{prop:DavisMahowald} 
proves the following lemma.

\begin{lem}\label{lem:algDavisMahowald}
The morphism
$$ H(1,4) \wedge DH(1,4) \rightarrow H(1,4) \wedge DH(1)
$$
is a split surjection.
\end{lem}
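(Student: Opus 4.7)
The plan is to mimic, in $\mc{D}_{A_*}$, the argument of Proposition~\ref{prop:DavisMahowald}, with $\otimes$ replacing $\wedge$ and $D(-) = \Hom_{\FF_2}(-, \FF_2)$ replacing Spanier-Whitehead duality. Both $H(1)$ and $H(1,4)$ are dualizable in $\mc{D}_{A_*}$, so the formal machinery carries over.

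First, I dualize the defining triangle of $H(1,4)$ to obtain the triangle
\[
DH(1,4) \to DH(1) \xrightarrow{Dv_1^4} \Sigma^{-12} DH(1)[4],
\]
and tensor with $H(1,4)$ to produce
\[
H(1,4) \otimes DH(1,4) \to H(1,4) \otimes DH(1) \xrightarrow{1 \otimes Dv_1^4} \Sigma^{-12} H(1,4) \otimes DH(1)[4],
\]
whose first arrow is the projection in the statement. By standard triangulated-category reasoning, this map is a split surjection in $\mc{D}_{A_*}$ if and only if the connecting map $1_{H(1,4)} \otimes Dv_1^4$ is null.

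Next, I identify this connecting map. Using the dualizability of $H(1)$, there is a natural equivalence $H(1,4) \otimes DH(1) \simeq \mathrm{RHom}(H(1), H(1,4))$ in $\mc{D}_{A_*}$, under which $1 \otimes Dv_1^4$ is identified with pre-composition with $v_1^4$. The lemma thus reduces to the statement that every morphism $\phi: H(1) \to H(1,4)$ in $\mc{D}_{A_*}$ extends along $p: H(1) \to H(1,4)$ to a self-map of $H(1,4)$, equivalently that the Yoneda action of $v_1^4 \in \Ext^{4,12}_{A_*}(H(1), H(1))$ on $\Ext^{*,*}_{A_*}(H(1), H(1,4))$ is zero.

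The main obstacle is verifying this vanishing. The long exact sequence obtained by applying $\mc{D}_{A_*}(H(1), -)$ to the defining triangle of $H(1,4)$ expresses $\Ext^{*,*}_{A_*}(H(1), H(1,4))$ as an extension whose sub and quotient pieces are, respectively, the cokernel and kernel of multiplication-by-$v_1^4$ on (appropriate shifts of) $\Ext^{*,*}_{A_*}(H(1), H(1))$. Both pieces are annihilated by $v_1^4$ tautologically, so $v_1^8$ kills the whole module; to upgrade this to the vanishing of $v_1^4$ one carries out a direct compatibility check at the cobar level, using the relation $p \circ v_1^4 = 0$. As the author indicates, this algebraic analysis is easier than the topological counterpart --- in particular, the derived category is free of the higher-order obstructions that complicate the Davis-Mahowald argument.
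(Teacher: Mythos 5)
Your formal reduction is the natural one, and it matches what the paper intends (the paper prints no argument at all, saying only that the proof of Proposition~\ref{prop:DavisMahowald} transposes to $\mc{D}_{A_*}$): the projection splits if and only if the connecting morphism $1_{H(1,4)}\otimes Dv_1^4$ of the tensored-up dual triangle is null. The trouble starts with your next ``equivalently''. Nullity of $1_{H(1,4)}\otimes Dv_1^4$ is the vanishing of one specific element of $\Ext^{4,12}_{A_*}(H(1,4)\otimes DH(1),\,H(1,4)\otimes DH(1))$, namely the image of $v_1^4$ under $(-)\otimes 1_{H(1,4)\otimes DH(1)}$; it is tested against the identity of $H(1,4)\otimes DH(1)$, not merely against maps from shifts of $\FF_2$. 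The statement you reduce to --- that Yoneda multiplication by $v_1^4$ annihilates the groups $\Ext^{*,*}_{A_*}(H(1),H(1,4))$ --- is a consequence of the splitting, not a sufficient condition for it, so even a complete proof of that statement would not give the lemma.

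More seriously, the decisive vanishing is never actually established. The ``tautological'' annihilation of the kernel and cokernel pieces in your long exact sequence concerns postcomposition (left multiplication) by $v_1^4$, whereas the action you need to kill is precomposition (right multiplication); identifying the two requires centrality of $v_1^4$ in $\Ext_{A_*}(H(1),H(1))$, which you do not address, and even granting it you only conclude that $v_1^8$ acts trivially. The promised ``direct compatibility check at the cobar level'' using the relation that $v_1^4$ composed with $H(1)\to H(1,4)$ vanishes is exactly the nonformal content of the lemma, and it is missing: that relation holds for the cofiber of \emph{any} self-map, so if the splitting followed formally from it, the analogous projection one level down, $M(1)\wedge DM(1)\to M(1)$ (cofiber of $2$ on $S$), would also split, which is false at $p=2$ since $2\cdot 1_{M(1)}=i\eta p\neq 0$; the same objection applies verbatim to the algebraic claim that $\mc{D}_{A_*}$ is ``free of the higher-order obstructions''. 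What is actually required, both in Davis--Mahowald's topological lemma and in its algebraic analogue here, is a concrete low-dimensional verification that the particular obstruction class (the image of $v_1^4$ in $\Ext^{4,12}_{A_*}$ of $DH(1,4)\otimes H(1)\otimes H(1,4)\otimes DH(1)$, i.e.\ in stem $8$, filtration $4$) is zero, using specific features of $v_1^4$ such as its Adams filtration and the vanishing of the relevant low-degree $\Ext$ groups. Your proposal gestures at this computation but does not carry it out, so the proof is incomplete at its one essential point.
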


\begin{cor}\label{cor:algDavisMahowald}
An element $x \in \Ext_{A_*}^{s,t}(H(1,4))$ lifts to give an element
$\td{x}$ in $\Ext_{A_*}(H(1,4) \otimes DH(1,4))$ if an only if $h_0 x = 0$.
\end{cor}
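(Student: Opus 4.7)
The plan is to mirror the proof of Corollary~\ref{cor:DavisMahowald}, replacing the topological cofiber sequence $DM(1) \to S \xrightarrow{2} S$ with its algebraic analogue in $\mc{D}_{A_*}$. First I would dualize the triangle (\ref{eq:H(1)cofiber}) under $\Hom_{\FF_2}(-,\FF_2)$ to obtain a triangle
$$ \Sigma^{-1} \FF_2 \to DH(1) \to \FF_2 \xrightarrow{h_0} \Sigma^{-1} \FF_2[1] $$
in $\mc{D}_{A_*}$; the connecting map is $h_0$ because dualization is an exact contravariant self-equivalence sending the extension class $h_0 \in \Ext^{1,1}_{A_*}(\FF_2, \FF_2)$ to itself. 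Tensoring with $H(1,4)$ and applying $\Ext^{*,*}_{A_*}(\FF_2, -)$ produces a long exact sequence containing
$$ \cdots \to \Ext^{s,t}(H(1,4) \otimes DH(1)) \to \Ext^{s,t}(H(1,4)) \xrightarrow{h_0 \cdot} \Ext^{s+1,t+1}(H(1,4)) \to \cdots, $$
so that $x \in \Ext^{s,t}(H(1,4))$ lifts to $\Ext^{s,t}(H(1,4) \otimes DH(1))$ if and only if $h_0 x = 0$.

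Next, I would observe that the map of interest for the corollary, namely $\Ext_{A_*}(H(1,4) \otimes DH(1,4)) \to \Ext_{A_*}(H(1,4))$, is induced by the composite
$$ H(1,4) \otimes DH(1,4) \to H(1,4) \otimes DH(1) \to H(1,4), $$
where the second arrow comes from the counit $DH(1) \to \FF_2$ (dual to the bottom cell $\FF_2 \to H(1)$), and the first arrow is a split surjection by Lemma~\ref{lem:algDavisMahowald}. The splitting implies that $x$ lifts along the composite if and only if it lifts along the second arrow alone, which by the previous paragraph happens precisely when $h_0 x = 0$.

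The one nontrivial step is the identification of the connecting map in the dualized triangle as $h_0$; once this self-duality of $h_0$ is pinned down, the rest is routine bookkeeping that formally mirrors the proof of Corollary~\ref{cor:DavisMahowald}.
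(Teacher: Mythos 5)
Your proposal is correct and follows essentially the argument the paper intends: the corollary is stated as an immediate consequence of the splitting in Lemma~\ref{lem:algDavisMahowald}, exactly as Corollary~\ref{cor:DavisMahowald} follows from Proposition~\ref{prop:DavisMahowald}, with the $h_0$-multiplication obstruction coming from the long exact sequence associated to the dual of the triangle (\ref{eq:H(1)cofiber}) tensored with $H(1,4)$. Your identification of the connecting map as $h_0$ and your use of the section to pass from $H(1,4)\otimes DH(1)$ to $H(1,4)\otimes DH(1,4)$ are both exactly the intended bookkeeping.
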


A computation of $\Ext_{A(2)_*}(H(1,4))$ appears in
Figure~\ref{fig:tmfASS12}.  Note that it is
$v_2^8$-periodic.

\begin{prop}\label{prop:v_2^8}
There exists an element
$$ \td{v_2^8} \in \Ext_{A_*}^{8,56}(H(1,4)\otimes DH(1,4)) $$
which maps to the element $v_2^8 \in \Ext^{8,56}_{A(2)_*}(H(1,4))$ under the composite
$$ \Ext^{*,*}_{A_*}(H(1,4)\otimes DH(1,4)) 
\rightarrow \Ext^{*,*}_{A_*}(H(1,4)) \rightarrow \Ext_{A(2)_*}(H(1,4)). $$
\end{prop}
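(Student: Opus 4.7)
The plan is to apply Corollary~\ref{cor:algDavisMahowald}: it reduces the proposition to producing an element $w \in \Ext^{8,56}_{A_*}(H(1,4))$ which maps to $v_2^8 \in \Ext^{8,56}_{A(2)_*}(H(1,4))$ under the natural map (induced by the Hopf algebra quotient $A_* \twoheadrightarrow A(2)_*$) and which satisfies $h_0 \cdot w = 0$. Given such a $w$, the corollary supplies a lift $\td{v_2^8} \in \Ext^{8,56}_{A_*}(H(1,4) \otimes DH(1,4))$, and by construction the composite in the proposition sends $\td{v_2^8}$ back to $v_2^8$.

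To construct $w$, I would use the cofiber triangle $\Sigma^{12} H(1)[-4] \xrightarrow{v_1^4} H(1) \to H(1,4)$, which induces a long exact sequence relating $\Ext^{*,*}_{A_*}(H(1))$ — the classically known $E_2$-term of the Adams spectral sequence for the mod $2$ Moore spectrum — to $\Ext^{*,*}_{A_*}(H(1,4))$. Through stem $48$, this determines $\Ext^{*,*}_{A_*}(H(1,4))$ by direct bookkeeping, supplemented as needed by Bruner's $\Ext$ software (acknowledged in the introduction). One then identifies a candidate $w$ and verifies that its image in $\Ext^{8,56}_{A(2)_*}(H(1,4))$, computed independently from Figure~\ref{fig:tmfASS12}, is $v_2^8$.

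For the $h_0$-annihilation one starts from the fact that $h_0 \cdot v_2^8 = 0$ already in $\Ext^{9,57}_{A(2)_*}(H(1,4))$, visible in Figure~\ref{fig:tmfASS12}, and then shows either that the natural map $\Ext^{9,57}_{A_*}(H(1,4)) \to \Ext^{9,57}_{A(2)_*}(H(1,4))$ is injective in this bidegree, or otherwise exploits the indeterminacy in the choice of $w$ within the fiber of the restriction to arrange $h_0 \cdot w = 0$. Both checks are routine consequences of the long exact sequence (or the machine computation) already used in the previous step.

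The principal obstacle is producing the lift itself: the natural map $\Ext^{*,*}_{A_*}(H(1,4)) \to \Ext^{*,*}_{A(2)_*}(H(1,4))$ is not in general surjective, so the existence of $w$ in bidegree $(8,56)$ is a genuine claim rather than a formal one, and its verification is the computational heart of the argument. Once $w$ exists, the $h_0$-annihilation and the subsequent application of Corollary~\ref{cor:algDavisMahowald} are largely bookkeeping.
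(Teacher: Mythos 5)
Your reduction via Corollary~\ref{cor:algDavisMahowald} is exactly the paper's first move, and your idea of working with the long exact sequence coming from the triangle $\Sigma^{12}H(1)[-4]\xrightarrow{v_1^4}H(1)\to H(1,4)$ together with machine computations of $\Ext_{A_*}(H(1))$ is also where the paper ends up. But the proposal defers, rather than supplies, the step that carries all the content: you say one "identifies a candidate $w$ and verifies that its image in $\Ext^{8,56}_{A(2)_*}(H(1,4))$ is $v_2^8$," yet the long exact sequence and a computation of the abstract groups $\Ext^{*,*}_{A_*}(H(1,4))$ give you no handle whatsoever on the restriction map $\Ext^{8,56}_{A_*}(H(1,4))\to\Ext^{8,56}_{A(2)_*}(H(1,4))$, and (as you yourself note) this map need not be surjective, so "verify" here is just a restatement of the proposition. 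The paper's mechanism for this is a May spectral sequence argument: $v_2^8$ is detected by $b_{3,0}^4$, and using Nakamura's formula and Tangora's calculations one has $d_8(b_{3,0}^4)=b_{2,0}^4h_5$ and $d_4(b_{2,0}^2h_5)=h_0^4h_3h_5$ in the May spectral sequence for $\Ext_{A_*}(\FF_2)$, with $v_1^4$-multiplication corresponding to $b_{2,0}^2$. This pins down the image of any class restricting to $v_2^8$ under the connecting maps $\Ext^{8,56}_{A_*}(H(1,4))\xrightarrow{\delta_{v_1^4}}\Ext^{5,44}_{A_*}(H(1))\xrightarrow{\delta_{v_0}}\Ext^{5,43}_{A_*}(\FF_2)$ as $h_0^3h_3h_5$, converting the detection problem into concrete statements about $\Ext_{A_*}(\FF_2)$ and $\Ext_{A_*}(H(1))$ that Bruner's software can settle. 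Without some such argument (or an explicit machine computation of the restriction-to-$A(2)_*$ map, which you do not propose how to perform), the existence of $w$ is unproven.

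Your treatment of the $h_0$-annihilation also leans on the wrong map: injectivity of $\Ext^{9,57}_{A_*}(H(1,4))\to\Ext^{9,57}_{A(2)_*}(H(1,4))$ is neither evident nor used in the paper, and "exploit the indeterminacy" needs a concrete input. The paper gets $h_0w=0$ from three machine-checked facts internal to $\Ext_{A_*}(H(1))$ and the $v_1^4$ long exact sequence: $\Ext^{9,56}_{A_*}(H(1))=0$ (so the boundary class $h_0^3h_3h_5[1]$ lifts to some $w$), every element of $\Ext^{5,44}_{A_*}(H(1))$ is killed by $h_0$, and every element of $\Ext^{9,57}_{A_*}(H(1))$ is $h_0$-divisible (so $w$ can be corrected by an element from $\Ext^{8,56}_{A_*}(H(1))$ to arrange $h_0w=0$). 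So while your skeleton matches the paper, the proposal as written has a genuine gap at the identification of $w$, which is precisely the computational heart you flag but do not fill.
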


\begin{proof}
In the May spectral sequence for $\Ext_{A(2)_*}(\FF_2)$, the element
$v_2^{8}$ is detected by $b_{3,0}^4$.  Using Nakamura's formula
\cite{Nakamura}, and the calculations of \cite{Tangora}, we see that 
in the May spectral sequence for $\Ext_{A_*}(\FF_2)$, there
are differentials:
\begin{align*}
d_8(b_{3,0}^4) & = b_{2,0}^4 h_5, \\
d_4(b_{2,0}^2h_5) & = h_0^4 h_3 h_5.
\end{align*}
In the May spectral sequence, $v_1^4$ multiplication corresponds to
multiplication by $b_{2,0}^2$.
It follows that an element of $\Ext^{8,56}_{A_*}(H(1,4))$ which maps to
$$ v_2^8 \in \Ext^{8,56}_{A(2)_*}(H(1,4)) $$ 
must have image $h_0^3 h_3 h_5$
under the composite
$$ \Ext^{8,56}_{A_*}(H(1,4)) \xrightarrow{\delta_{v_1^4}} 
\Ext^{5,44}_{A_*}(H(1))
\xrightarrow{\delta_{v_0}} \Ext^{5,43}_{A_*}(\FF_2). $$
Since the element $h_0^3 h_3 h_5 \in \Ext^{5,43}_{A_*}(\FF_2)$ is killed by
$h_0$ multiplication, it lifts to an element $h_0^3 h_3 h_5[1] \in
\Ext^{5,44}_{A_*}(H(1))$.  Consider the exact sequence
$$ \Ext^{8,56}_{A_*}(H(1)) \rightarrow \Ext_{A_*}^{8,56}(H(1,4))
\rightarrow \Ext_{A_*}^{5,44}(H(1)) \xrightarrow{v_1^4}
\Ext^{9,56}_{A_*}(H(1)). $$
A computer calculation of $\Ext^{*,*}_{A_*}(H(1))$ using Bruner's programs
\cite{Bruner} reveals that:
\begin{enumerate}
\item
$\Ext^{9,56}_{A_*}(H(1)) = 0$,
\item
Every element $x \in \Ext^{5,44}_{A_*}(H(1))$ satisfies $h_0 x = 0$,
\item
Every element $y \in \Ext^{9,57}_{A_*}(H(1))$ satisfies $y = h_0 z$ for
some $z \in \Ext^{8,56}_{A_*}(H(1))$.
\end{enumerate}
These three facts allow us to deduce that there exists an element $w
\in \Ext^{8,56}_{A_*}(H(1,4))$ which maps to $h_0^3 h_3 h_5[1]$, and for
which we have $h_0 w = 0$.  By Corollary~\ref{cor:algDavisMahowald}, the
element $w$ lifts to the desired element $\td{v_2^8}$ in 
$\Ext^{8,56}_{A_*}(H(1,4)\otimes
DH(1,4))$.
\end{proof}

We shall abusively refer to the element $\td{v_2^8} \in
\Ext^{8,56}_{A_*}(H(1,4)\otimes DH(1,4))$ as $v_2^8$.

\section{Brown-Gitler comodules}\label{sec:BrownGitler}

\subsection*{Definitions}

Let $A(i)_*$ denote the quotient of the dual Steenrod algebra dual to the
subalgebra $A(i)$ of the Steenrod algebra.  There is an isomorphism
$$ A(i)_* \cong \FF_p[\bar{\xi}_1, \bar{\xi}_2, \bar{\xi}_3 \ldots,
\bar{\xi}_{i+1}]/(\bar{\xi}_1^{2^{i+1}}, \bar{\xi}^{2^i}_2,
\bar{\xi}_3^{2^{i-1}}, \ldots, \bar{\xi}_{i+1}^2 ). $$
Here, $\bar{\xi}_i$ denotes the conjugate of $\xi_i$.
We define a filtration on $A_\ast$
which induces a filtration on the $A_*$-subcomodule
$$
(A \mmod A(i))_* = 
A_\ast\Box_{A(i)_\ast}
\FF_2 \cong \FF_2[\bar{\xi}_1^{2^{i+1}},\bar{\xi}_2^{2^{i}},\ldots,
\bar{\xi}_{i+1}^2,\bar{\xi}_{i+2},\ldots ].
$$
Our filtration is an 
increasing filtration of algebras given on generators by 
$|\bar{\xi}_j|=2^{j-1}$. In particular, every element of 
$(A\mmod A(i))_*$ has filtration divisible by $2^{i+1}$.
The Brown-Gitler comodule $N_i(j)$ is the subspace of 
$(A\mmod A(i))_*$ spanned by
all elements of filtration less than or equal to $2^{i+1}j$.  
Using the coproduct formula
\begin{equation}\label{eq:coproduct}
\psi(\bar{\xi}_k) = \sum_{k_1 + k_2 = k} \bar{\xi}_{k_1} \otimes
\bar{\xi}_{k_2}^{2^{k_1}},
\end{equation}
the submodule
$N_i(j)$ is easily seen to be an $A_*$-subcomodule.  Thus we have an
increasing sequence of $A_*$-comodules:
$$ \FF_2 \cong N_i(0) \subset N_i(1) \subset N_i(2) \subset \cdots \subset
(A\mmod A(i))_*. $$

Define a map of ungraded rings
$$ \phi_i : (A \mmod A(i))_* \rightarrow (A \mmod A(i-1))_* $$
whose effect on generators is given by:
$$ \phi_i(\bar{\xi}_k^{2^l}) = 
\begin{cases}
\bar{\xi}_{k-1}^{2^l}, & k > 1, \\
1, & k = 1.
\end{cases}
$$

\begin{lem}\label{lem:phi_i}
The map $\phi_i$ is a map of ungraded $A(i)_*$-comodules.
\end{lem}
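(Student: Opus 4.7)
The plan is to verify directly that $\phi_i$ intertwines the $A(i)_\ast$-coactions. Writing $M=(A\mmod A(i))_\ast$ and $N=(A\mmod A(i-1))_\ast$, the $A(i)_\ast$-coactions in question are $\psi_M=(\pi_i\otimes 1)\circ\psi|_M$ and $\psi_N=(\pi_i\otimes 1)\circ\psi|_N$, where $\pi_i\colon A_\ast\twoheadrightarrow A(i)_\ast$ is the quotient map and $\psi$ is the $A_\ast$-coproduct. I need to show
$$
(1\otimes\phi_i)\circ\psi_M \;=\; \psi_N\circ\phi_i.
$$

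The first step is a reduction: since $M,N\subset A_\ast$ are sub-$A_\ast$-comodule algebras, the coactions $\psi_M,\psi_N$ are algebra homomorphisms, and $\phi_i$ is an algebra homomorphism by construction, so both sides of the displayed identity are algebra maps $M\to A(i)_\ast\otimes N$. It therefore suffices to verify the identity on a set of algebra generators of $M$, which I take to be $\bar{\xi}_1^{2^{i+1}}$ together with $\bar{\xi}_k^{2^{l_k}}$ for $k\ge 2$, where $l_k=\max(0,\,i+2-k)$.

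The generator $\bar{\xi}_1^{2^{i+1}}$ is immediate: it vanishes in $A(i)_\ast$, so $\psi_M(\bar{\xi}_1^{2^{i+1}})=1\otimes\bar{\xi}_1^{2^{i+1}}$, which maps to $1\otimes 1$, agreeing with $\psi_N(\phi_i(\bar{\xi}_1^{2^{i+1}}))=\psi_N(1)=1\otimes 1$. For a generator $\bar{\xi}_k^{2^l}$ with $k\ge 2$, I would expand $\psi(\bar{\xi}_k^{2^l})=\sum_{k_1+k_2=k}\bar{\xi}_{k_1}^{2^l}\otimes\bar{\xi}_{k_2}^{2^{k_1+l}}$ using \eqref{eq:coproduct}, apply $\pi_i\otimes 1$ to retain only those terms with $\bar{\xi}_{k_1}^{2^l}\ne 0$ in $A(i)_\ast$ (equivalently $k_1=0$, or $k_1\ge 1$ with $l+k_1\le i+1$), then apply $1\otimes\phi_i$, which sends $\bar{\xi}_{k_2}^{2^{k_1+l}}$ to $\bar{\xi}_{k_2-1}^{2^{k_1+l}}$ for $k_2\ge 2$ and to $1$ for $k_2=1$. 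Reindexing the result by $j_1=k_1$, $j_2=k_2-1$ yields exactly the expansion of $\psi_N(\bar{\xi}_{k-1}^{2^l})=\psi_N(\phi_i(\bar{\xi}_k^{2^l}))$ obtained by applying \eqref{eq:coproduct} to $\bar{\xi}_{k-1}$ and projecting via $\pi_i\otimes 1$; the surviving index ranges match because the projection condition $l+k_1\le i+1$ becomes $l+j_1\le i+1$ under the reindexing.

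No serious obstacle is expected here; the proof is a direct bookkeeping of terms in the coproduct. The key structural point is that the ``index shift'' $k_2\mapsto k_2-1$ realized by $\phi_i$ on the right tensor factor precisely matches the shift $k\mapsto k-1$ relating the coproducts of $\bar{\xi}_k$ and $\bar{\xi}_{k-1}$, while the exponent $2^{k_1+l}$ on that factor is preserved by $\phi_i$ and the vanishing condition $l+k_1\le i+1$ in $A(i)_\ast$ is symmetric in $k_1$ on the two sides.
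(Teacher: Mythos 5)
Your proof is correct and follows the same route as the paper: reduce to the algebra generators $\bar{\xi}_1^{2^{i+1}}, \bar{\xi}_2^{2^i}, \ldots$ of $(A \mmod A(i))_*$ (using that the coactions and $\phi_i$ are algebra maps) and then check the identity directly via the coproduct formula (\ref{eq:coproduct}) and the relations in $A(i)_*$. Your write-up simply makes explicit the bookkeeping of surviving terms that the paper leaves as "easily checked."
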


\begin{proof}
As an $A(i)_*$-comodule algebra, $(A \mmod A(i))_*$ is generated by
the elements $\{ \bar{\xi}_{1}^{2^{i+1}}, \bar{\xi}_2^{2^i}, \ldots \}$.
It therefore suffices to check that $\phi_i$ commutes with the coaction on
on these generators.  This is easily checked using the coproduct
formula~(\ref{eq:coproduct}) and the relations in $A(i)_*$.
\end{proof}

Let $M_i(j)$ denote the subspace of $(A \mmod A(i))_*$ spanned by the
monomials of filtration exactly $2^{i+1}j$.

\begin{lem}
The map $\phi_i$ maps the subspace $M_i(j)$ isomorphically onto the
$A_*$-subcomodule $N_{i-1}(j) \subset (A\mmod A(i-1))_*$.
\end{lem}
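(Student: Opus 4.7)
The plan is to verify the claim by exhibiting an explicit bijection on monomial bases. Write $y_0, y_1, y_2, \ldots$ for the standard polynomial generators $\bxi_1^{2^{i+1}}, \bxi_2^{2^i}, \ldots, \bxi_{i+1}^2, \bxi_{i+2}, \bxi_{i+3}, \ldots$ of $(A\mmod A(i))_*$, and $y'_0, y'_1, \ldots$ for the corresponding generators of $(A\mmod A(i-1))_*$. By the definition of $\phi_i$ we have $\phi_i(y_0) = 1$ and $\phi_i(y_k) = y'_{k-1}$ for $k \ge 1$, and a direct filtration check gives $|y_0| = 2^{i+1}$ together with the key identity $|y_k| = 2|y'_{k-1}|$ for every $k \ge 1$.

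From here the proof is bookkeeping. For a monomial $m = \prod_k y_k^{a_k} \in M_i(j)$, the defining filtration equation $\sum_k a_k |y_k| = 2^{i+1}j$ rearranges, via the identity above, to $2^{i+1} a_0 + 2 \sum_{k \ge 1} a_k |y'_{k-1}| = 2^{i+1}j$. Thus $\phi_i(m) = \prod_{k \ge 1}(y'_{k-1})^{a_k}$ has filtration $2^i(j - a_0) \le 2^i j$, so $\phi_i(m) \in N_{i-1}(j)$. Injectivity on $M_i(j)$ is immediate, since the exponents $a_k$ for $k \ge 1$ are read off directly from $\phi_i(m)$ and $a_0$ is then forced by the filtration equation. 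For surjectivity, the observation (established earlier) that every monomial in $(A\mmod A(i-1))_*$ has filtration divisible by $2^i$ lets me write any monomial $m' = \prod (y'_k)^{b_k} \in N_{i-1}(j)$ as having filtration $2^i j'$ for some $0 \le j' \le j$; then $y_0^{j-j'} \prod_{k \ge 0} y_{k+1}^{b_k}$ is a monomial of total filtration $(j-j')2^{i+1} + 2 \cdot 2^i j' = 2^{i+1}j$ in $M_i(j)$, and it maps to $m'$ under $\phi_i$.

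There is no deep obstacle. The only mild subtlety is matching the doubling $|y_k| = 2|y'_{k-1}|$ with the divisibility of target filtrations by $2^i$; this divisibility is exactly what ensures that the extra $y_0^{j-j'}$ padding available inside $M_i(j)$ suffices to absorb the gap between the image filtration and $2^i j$, and hence that $\phi_i$ hits all of $N_{i-1}(j)$ rather than only its top filtration piece.
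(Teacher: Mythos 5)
Your argument is correct and is essentially the paper's own: both proofs come down to the monomial-level bookkeeping that $\phi_i$ kills $\bxi_1^{2^{i+1}}$, sends the remaining generators bijectively to those of $(A\mmod A(i-1))_*$ with filtration halved, and that filtrations in the target are divisible by $2^i$, so the power of $\bxi_1^{2^{i+1}}$ exactly records which summand $M_{i-1}(j-s)\subset N_{i-1}(j)$ a monomial lands in. The paper phrases this as the subspace of monomials $\bxi_1^{2^{i+1}s}x$ mapping isomorphically onto $M_{i-1}(j-s)$, while you spell out the same bijection via explicit injectivity and surjectivity checks.
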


\begin{proof}
The subspace of $M_i(j)$ spanned by monomials of the form
$\bar{\xi}_1^{2^{i+1}s} x$
where $x$ is a monomial involving $\bar{\xi}_k^{2^l}$ for $k > 1$ is mapped
isomorphically onto the subspace $M_{i-1}(j-s) \subset N_{i-1}(j)$.  
\end{proof}

Using Lemma~\ref{lem:phi_i}, we have the following corollaries.

\begin{cor}
The subspace $M_i(j) \subset (A\mmod A(i))_*$ is an $A(i)_*$-subcomodule.
\end{cor}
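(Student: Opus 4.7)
The plan is to prove the stronger statement that the $A(i)_\ast$-coaction $\bar{\psi} : (A\mmod A(i))_\ast \to A(i)_\ast \otimes (A\mmod A(i))_\ast$ (induced from the $A_\ast$-coproduct by applying the quotient $p_i \colon A_\ast \to A(i)_\ast$ to the left factor) respects the internal filtration, in the sense that for any $x$ of filtration exactly $f$, one has $\bar{\psi}(x) \in A(i)_\ast \otimes \{y : \abs{y} = f\}$. Since the algebra generators of $(A\mmod A(i))_\ast$ are $\bar{\xi}_k^{2^{i+2-k}}$ for $1 \le k \le i+1$ and $\bar{\xi}_k$ for $k \ge i+2$, and since $\bar{\psi}$ is an algebra map while the filtration is additive on products, it suffices to verify this on each such generator $g$.

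For such a generator $g = \bar{\xi}_k^{2^l}$ (with $l = i+2-k$ in the first case and $l = 0$ in the second), the coproduct formula (\ref{eq:coproduct}) gives
$$\psi(g) = \sum_{k_1+k_2 = k} \bar{\xi}_{k_1}^{2^l} \otimes \bar{\xi}_{k_2}^{2^{k_1 + l}},$$
and the right-hand tensor factor of the $(k_1,k_2)$-summand has filtration $2^{k_1+k_2+l-1}=2^{k+l-1}=\abs{g}$, with the single exception of the term $g \otimes 1$ arising from $(k_1,k_2) = (k,0)$, whose right factor has filtration $0$. The crucial observation is that this offending term dies after applying $p_i \otimes 1$: the algebra generators of $(A\mmod A(i))_\ast$ listed above are exactly the relations defining $A(i)_\ast$, so $p_i(g) = 0$ in either case. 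A separate check confirms that every surviving right factor $\bar{\xi}_{k_2}^{2^{k_1+l}}$ in fact lies in $(A\mmod A(i))_\ast$, using the identity $k_1 + l = i + 2 - k_2$ in the first case and the inequality $k \ge i+2$ in the second.

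Consequently every surviving term of $\bar{\psi}(g) = (p_i \otimes 1)\psi(g)$ has its right factor in the filtration-$\abs{g}$ subspace of $(A\mmod A(i))_\ast$. Extending by multiplicativity, $\bar{\psi}$ preserves filtration on all of $(A\mmod A(i))_\ast$, so $M_i(j)$---the filtration-$2^{i+1}j$ component---is an $A(i)_\ast$-subcomodule, as claimed. The only substantive point is the cancellation of $g \otimes 1$ under $p_i$; this is the same mechanism that underlies Lemma~\ref{lem:phi_i}, where $\phi_i(\bar{\xi}_1^{2^{i+1}}) = 1$ is forced precisely because $\bar{\xi}_1^{2^{i+1}}$ vanishes in $A(i)_\ast$. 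The remainder is routine bookkeeping with the coproduct formula.
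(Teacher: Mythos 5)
Your proof is correct: the induced $A(i)_*$-coaction $(p_i\otimes 1)\psi$ preserves the Brown--Gitler filtration exactly, since on an algebra generator $g=\bar{\xi}_k^{2^l}$ of $(A\mmod A(i))_*$ the only filtration-dropping term of $\psi(g)$ is $g\otimes 1$, and $p_i(g)=0$ because the generators of $(A\mmod A(i))_*$ are exactly the classes killed in $A(i)_*$; multiplicativity and the fact that $M_i(j)$ is spanned by the monomials of filtration exactly $2^{i+1}j$ then give the claim, and your side checks (that the surviving right-hand factors $\bar{\xi}_{k_2}^{2^{k_1+l}}$ lie in $(A\mmod A(i))_*$, and that filtration is exact on products of monomials) are the right ones. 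This is, however, a different route from the paper's: there the corollary is extracted from the comparison map $\phi_i$, using Lemma~\ref{lem:phi_i} (that $\phi_i$ is a map of ungraded $A(i)_*$-comodules) together with the preceding lemma that $\phi_i$ carries $M_i(j)$ isomorphically onto the $A_*$-subcomodule $N_{i-1}(j)$, lowering internal degree on $M_i(j)$ by exactly $2^{i+1}j$, so that the subcomodule statement follows formally (a degree count forces the coaction on $M_i(j)$ to land in $A(i)_*\otimes M_i(j)$) rather than by a fresh computation with (\ref{eq:coproduct}). Your direct verification is more elementary and self-contained --- and, as you observe, it isolates the same cancellation mechanism that drives the proof of Lemma~\ref{lem:phi_i} --- but the paper's route buys more: the same map $\phi_i$ simultaneously produces the graded identification $M_i(j)\cong \Sigma^{2^{i+1}j}N_{i-1}(j)$ and hence the inductive description of the Brown--Gitler comodules used later, whereas your argument yields the subcomodule statement and the splitting of Corollary~\ref{cor:splitting} but not that identification.
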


\begin{cor}
There is an isomorphism of (graded) $A(i)_*$-comodules
$$ M_i(j) \cong \Sigma^{2^{i+1}j} N_{i-1}(j). $$
\end{cor}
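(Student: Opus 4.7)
The plan is to upgrade the ungraded bijection $\phi_i|_{M_i(j)} : M_i(j) \to N_{i-1}(j)$ supplied by the preceding (unlabeled) lemma to a graded $A(i)_*$-comodule isomorphism after a single $2^{i+1}j$-fold suspension. The $A(i)_*$-comodule structure needed on both sides is already in place: the corollary immediately above makes $M_i(j)$ an $A(i)_*$-subcomodule of $(A\mmod A(i))_*$, while $N_{i-1}(j)$ is an $A_*$-subcomodule of $(A\mmod A(i-1))_*$ and hence an $A(i)_*$-subcomodule by restriction of coaction. Lemma~\ref{lem:phi_i} then certifies that $\phi_i|_{M_i(j)}$ is a map of $A(i)_*$-comodules; combined with the preceding lemma it is an ungraded $A(i)_*$-comodule isomorphism.

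The remaining step is to track the topological grading under $\phi_i$. On a polynomial generator $\bxi_k^{2^l}$ of $(A\mmod A(i))_*$, the prescription for $\phi_i$ lowers topological degree from $2^l(2^k-1)$ to $2^l(2^{k-1}-1)$ when $k>1$, a drop of $2^{l+k-1}$; and from $2^l$ to $0$ when $k=1$, again a drop of $2^{l+k-1}=2^l$. In both cases the degree drop coincides with the filtration $|\bxi_k^{2^l}|=2^{l+k-1}$ fixed in the text. Since topological degree and filtration are both additive on monomial products, $\phi_i$ decreases the topological degree of every monomial of $(A\mmod A(i))_*$ by exactly its filtration. On $M_i(j)$, where every monomial has filtration the constant $2^{i+1}j$, the degree drop is therefore the constant $2^{i+1}j$, and reinterpreting this as an upward shift on the target promotes $\phi_i|_{M_i(j)}$ to the sought graded isomorphism $M_i(j) \xrightarrow{\sim} \Sigma^{2^{i+1}j} N_{i-1}(j)$ of $A(i)_*$-comodules. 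I do not foresee any substantive obstacle here: after the preceding lemmas, the statement is a direct bookkeeping exercise in the normalizations $|\bxi_j|=2^{j-1}$ (filtration) and $\deg \bxi_j = 2^j-1$ (topological).
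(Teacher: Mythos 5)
Your argument is correct and is essentially the paper's own (implicit) proof: the paper derives this corollary directly from Lemma~\ref{lem:phi_i} together with the preceding lemma that $\phi_i$ carries $M_i(j)$ isomorphically onto $N_{i-1}(j)$, with the suspension $\Sigma^{2^{i+1}j}$ accounted for by exactly the degree-versus-filtration bookkeeping you spell out. You have simply made explicit the grading check (degree drop of each monomial equals its filtration, which is constant $2^{i+1}j$ on $M_i(j)$) that the paper leaves to the reader.
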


\begin{cor}\label{cor:splitting}
There is a splitting of $A(i)_*$-comodules
$$ (A \mmod A(i))_* \cong \bigoplus_{j \ge 0} M_i(j). $$
\end{cor}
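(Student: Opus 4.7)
The plan is to combine two observations: a vector-space direct sum decomposition of $(A\mmod A(i))_*$ coming from the monomial basis, and the fact (already established in the preceding corollary) that each $M_i(j)$ is an $A(i)_*$-subcomodule. Since a direct sum of subcomodules along a vector space splitting automatically gives a comodule splitting, this suffices.

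First, I would show that $(A\mmod A(i))_* = \bigoplus_{j \ge 0} M_i(j)$ as graded vector spaces. Recall that $(A\mmod A(i))_*$ is the polynomial algebra on the generators $\bar{\xi}_k^{2^{i-k+2}}$ for $1 \le k \le i+1$ and $\bar{\xi}_k$ for $k \ge i+2$. Under our filtration $|\bar{\xi}_j| = 2^{j-1}$, the first family of generators has filtration $2^{k-1} \cdot 2^{i-k+2} = 2^{i+1}$, and the second has filtration $2^{k-1} \ge 2^{i+1}$. So every monomial in these generators has filtration an integer multiple of $2^{i+1}$. The monomial basis therefore partitions according to the value $j$ of (filtration)$/2^{i+1}$, and $M_i(j)$ is by definition the span of the monomials in the $j$-th part. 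This gives the asserted vector space decomposition.

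Second, the preceding corollary tells us that each $M_i(j)$ is an $A(i)_*$-subcomodule of $(A\mmod A(i))_*$. Given this, writing an arbitrary element as a finite sum $\sum_j m_j$ with $m_j \in M_i(j)$, the coaction satisfies
$$ \psi\Bigl(\sum_j m_j\Bigr) = \sum_j \psi(m_j) \in A(i)_* \otimes \bigoplus_{j \ge 0} M_i(j), $$
so the vector space splitting is also a splitting in the category of $A(i)_*$-comodules.

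There is no real obstacle here: the combinatorial point is the filtration computation for the generators of $(A\mmod A(i))_*$, and the comodule-theoretic content has already been carried by the preceding corollary (which in turn reduced, via the map $\phi_i$, to the fact that each $N_{i-1}(j)$ is an $A_*$-subcomodule of $(A \mmod A(i-1))_*$).
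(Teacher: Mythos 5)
Your proof is correct and follows the same route the paper intends: the monomial/filtration computation gives the vector space decomposition into the pieces $M_i(j)$, and the preceding corollary (that each $M_i(j)$ is an $A(i)_*$-subcomodule, via $\phi_i$ and the Brown--Gitler comodules $N_{i-1}(j)$) upgrades it to a splitting of $A(i)_*$-comodules. The paper states this corollary without further argument, and your write-up supplies exactly the intended details.
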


\begin{rmk}
The comodule $N_{-1}(j)$ (respectively $N_0(j)$, $N_1(j)$) is isomorphic 
as an
$A_*$-comodule to the homology of the $j$th $\ZZ/2$ (respectively integral,
$bo$) Brown-Gitler spectrum.  It is not known in general if the comodules
$N_i(j)$ are realizable for $i > 1$.
\end{rmk}

\subsection*{Algebraic resolutions}

We now describe an algebraic analog of an Adams resolution.  For $i = -1$
(respectively $i = 0,1,2$) this algebraic resolution will correspond to the 
$H\FF_2$ (respectively
$H\ZZ$, $bo$, $\tmf$) Adams resolution.

Let $X$ be an object of the derived category $\mc{D}_{A_*}$.  
We define $T_i(X)^\bullet$ to be the following cosimplicial object.
$$ 
\xymatrix@C+1.8em{
(A \mmod A(i))_* \otimes X 
\ar@<1ex>[r]|-{u \otimes 1} 
\ar@<-1ex>[r]|-{1 \otimes u}
& (A \mmod A(i))_*^{\otimes 2} \otimes X 
\ar@<2ex>[r]|-{u \otimes 1 \otimes 1} 
\ar@<0ex>[r]|-{1 \otimes u \otimes 1}
\ar@<-2ex>[r]|-{1 \otimes 1 \otimes u} 
& (A \mmod A(i))_*^{\otimes 3} \otimes X 
\ar@<3ex>[r] 
\ar@<1ex>[r]
\ar@<-1ex>[r] 
\ar@<-3ex>[r]
& \!
}
$$
Here, $u$ is the unit
$$ \FF_2 \rightarrow (A \mmod A(i))_*. $$
Since $(A \mmod A(i))_*$ is an algebra, the canonical map
$$ X \rightarrow \Tot( T^i(X)^\bullet ) $$
is a quasi-isomorphism (see, for instance, \cite[Prop.~8.6.8]{Weibel}).  
We therefore have a Bousfield-Kan spectral sequence
\begin{equation}\label{eq:BKSS}
E_1^{s,t,n} = \Ext^{s,t}_{A_*}((A \mmod A(i))_* \otimes 
\br{(A \mmod A(i))}_*^{\otimes n} \otimes X[-n])
\Rightarrow \Ext^{s,t}_{A_*}(X).
\end{equation}
where 
$$ \br{(A \mmod A(i))}_* = \coker \left( 
\FF_2 \xrightarrow{u} (A\mmod A(i))_* \right).
$$
The $E_1$-term can be simplified using a change of rings isomorphism,
together with the splitting of Corollary~\ref{cor:splitting}:
\begin{align*}
E_1^{s,t,n}  
& = \Ext^{s,t}_{A_*}((A \mmod A(i))_* \otimes 
\br{(A \mmod A(i))}^{\otimes n}_* \otimes X[-n]) \\
& \cong \Ext^{s,t}_{A(i)_*}(\br{(A \mmod A(i))}_*^{\otimes n} \otimes X[-n]) \\
& \cong 
\bigoplus_{j_1, \ldots, j_n \ge 1} 
\Ext^{s,t}_{A(i)_*}(M_i(j_1) \otimes \cdots \otimes M_i(j_n) \otimes X[-n]).
\end{align*}
We shall call this spectral sequence (\ref{eq:BKSS}) the 
\emph{$A \mmod A(i)$-resolution} for $X$.  
In this paper we are only be interested in the case where $i = 2$.  In this
case, we shall refer to the $A\mmod A(2)$-resolution as the \emph{algebraic
$\tmf$-resolution}.

\begin{lem}\label{lem:resolutionmult}
If $R$ is a monoid in the derived category $\mc{D}_{A_*}$, then the
$A\mmod A(i)$-resolution for $R$ is a spectral sequence of algebras. 
If $M$ is an $R$-module, then the $A\mmod A(i)$-resolution for $M$ 
is a spectral
sequence of modules over the $A\mmod A(i)$-resolution for $R$.
\end{lem}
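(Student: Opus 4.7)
The plan is to realize the multiplicative structure on the $A\mmod A(i)$-resolution at the level of the underlying cosimplicial object, using that $(A\mmod A(i))_*$ is a commutative $A_*$-comodule algebra (immediate from the Milnor coproduct).

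The first step is to construct, for any objects $X, Y$ of $\mc{D}_{A_*}$, a natural map of cosimplicial $A_*$-comodules
$$ \mu_{X,Y} : T_i(X)^\bullet \otimes T_i(Y)^\bullet \to T_i(X \otimes Y)^\bullet. $$
At cosimplicial level $n$, the source is
$$ \bigl((A\mmod A(i))_*^{\otimes (n+1)} \otimes X\bigr) \otimes \bigl((A\mmod A(i))_*^{\otimes (n+1)} \otimes Y\bigr); $$
shuffling $X$ past the second factor $(A\mmod A(i))_*^{\otimes (n+1)}$ and then multiplying corresponding adjacent copies of $(A\mmod A(i))_*$ in each of the $n+1$ slots produces a canonical map to $(A\mmod A(i))_*^{\otimes (n+1)} \otimes X \otimes Y$. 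Commutativity of $(A\mmod A(i))_*$ is what makes this assignment a map of $A_*$-comodules, and unitality plus associativity give compatibility with the cofaces $u \otimes 1, 1 \otimes u$ and the codegeneracies (multiplication of adjacent factors). The construction is evidently natural and associative up to the standard shuffle isomorphisms.

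When $R$ is a monoid in $\mc{D}_{A_*}$, postcomposing $\mu_{R,R}$ with the multiplication $R \otimes R \to R$ endows $T_i(R)^\bullet$ with the structure of a cosimplicial monoid; when $M$ is an $R$-module, postcomposing $\mu_{R, M}$ with the action $R \otimes M \to M$ produces a cosimplicial action of $T_i(R)^\bullet$ on $T_i(M)^\bullet$. Applying $\Ext_{A_*}(\FF_2, -)$ levelwise and totalizing, the Eilenberg--Zilber map supplies a lax monoidal structure on $\Tot$ that is strictly compatible with the cosimplicial filtration (a pairing of levels $(p,q)$ lands in level $p+q$), and composing with the cosimplicial monoid/module structure realizes a pairing on the Bousfield--Kan spectral sequence (\ref{eq:BKSS}) of the required form.

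The main obstacle in filling in the details is the purely combinatorial check that $\mu_{X,Y}$ commutes with the cosimplicial structure maps; once this is in hand, the passage from a cosimplicial pairing to a multiplicative structure on the Bousfield--Kan spectral sequence is standard, cf.\ \cite[Ch.~IV, Def.~4.2]{Hinfty} or \cite[\S 8]{Weibel}, and the only genuine algebraic input beyond the definitions is the commutativity of $(A\mmod A(i))_*$.
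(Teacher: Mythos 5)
Your argument is essentially correct, and it is worth noting that the paper does not prove this lemma at all --- it is stated as a standard fact --- so what you have done is supply the routine argument the authors implicitly invoke; the closest thing in the paper is Proposition~\ref{prop:MASSmult}, which handles the analogous topological statement by smashing Adams resolutions rather than by a cosimplicial pairing. Two small points deserve care. First, commutativity of $(A \mmod A(i))_*$ is not what makes the slotwise multiplication a map of $A_*$-comodules --- that only needs $(A \mmod A(i))_*$ to be an $A_*$-comodule algebra --- rather, commutativity is what you need for compatibility with the codegeneracies (multiplying adjacent factors), and over $\FF_2$ there are no sign issues in the shuffle; also, for cosimplicial objects the filtration-preserving pairing $\Tot \otimes \Tot \rightarrow \Tot$ is really the dual Alexander--Whitney map rather than the shuffle/Eilenberg--Zilber map, though this is only a naming matter. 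Second, $R$ is a monoid only in the derived category $\mc{D}_{A_*}$, so postcomposing $\mu_{R,R}$ with $R \otimes R \rightarrow R$ does not literally make $T_i(R)^\bullet$ a strict cosimplicial monoid; the clean way to finish is to observe that your external pairing $\mu_{X,Y}$ is strict and natural, that the $E_1$-page of (\ref{eq:BKSS}) depends only on derived-category data, and that the spectral sequence is functorial for morphisms of $\mc{D}_{A_*}$, so the derived-category multiplication and action maps induce the required pairings from $E_1$ onward. With those adjustments your proof is a perfectly good justification of the lemma.
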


\section{$\Ext$ computations}\label{sec:Charts}

In this section we describe $\Ext_{A(2)_*}^{s,t}(M)$ for various objects $M
\in \mc{D}_{A(2)_*}$.  We first explain the computations, and then describe
the methodology used to produce these computations.  
Charts displaying these $\Ext$ groups can be found
in the following figures:
\vspace{10pt}

\begin{tabular}{ll}
Figure~\ref{fig:A2-bo_1}: & $\Ext^{*,*}_{A(2)_*}(\FF_2)$ and
$\Ext^{*,*}_{A(2)_*}(M_2(1))$, \\
Figure~\ref{fig:bo_1bo_1-bo_1bo_1bo_1}: &
$\Ext^{*,*}_{A(2)_*}(M_2(1)^{\otimes 2})$ and
$\Ext^{*,*}_{A(2)_*}(M_2(1)^{\otimes 3})$, \\
Figure~\ref{fig:bo_1M1-bo_1bo_1M1}: & $\Ext^{*,*}_{A(2)_*}(M_2(1)\otimes
H(1))$ and
$\Ext^{*,*}_{A(2)_*}(M_2(1)^{\otimes 2} \otimes H(1))$, \\
Figure~\ref{fig:bo_1bo_1bo_1M1-bo_1M14}: &
$\Ext^{*,*}_{A(2)_*}(M_2(1)^{\otimes 3}\otimes
H(1))$ and
$\Ext^{*,*}_{A(2)_*}(M_2(1) \otimes H(1,4))$, \\
Figure~\ref{fig:bo_1bo_1M14-bo_1bo_1bo_1M14}: &
$\Ext^{*,*}_{A(2)_*}(M_2(1)^{\otimes 2}\otimes
H(1,4))$ and
$\Ext^{*,*}_{A(2)_*}(M_2(1)^{\otimes 3} \otimes H(1,4))$. \\
\end{tabular}
\vspace{10pt}

In each of these charts, the indexing has been modified to put the bottom
generator of $M_2(1)^{\otimes k}$ in internal degree $0$.  The meaning of
the notation in each of these charts is explained below.

\begin{figure}
\includegraphics[width=0.514\textwidth]{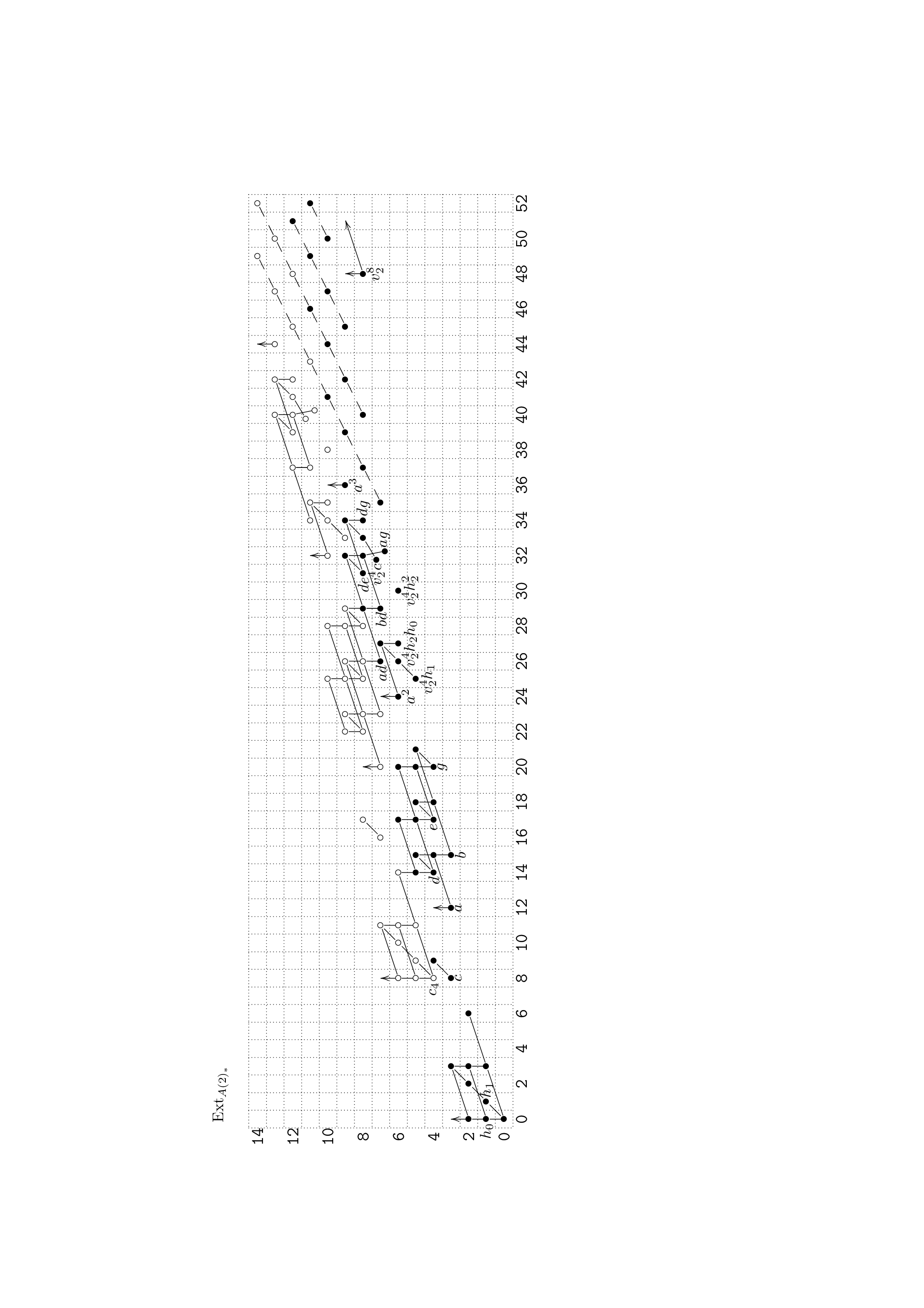}
\hfill
\includegraphics[width=0.446\textwidth]{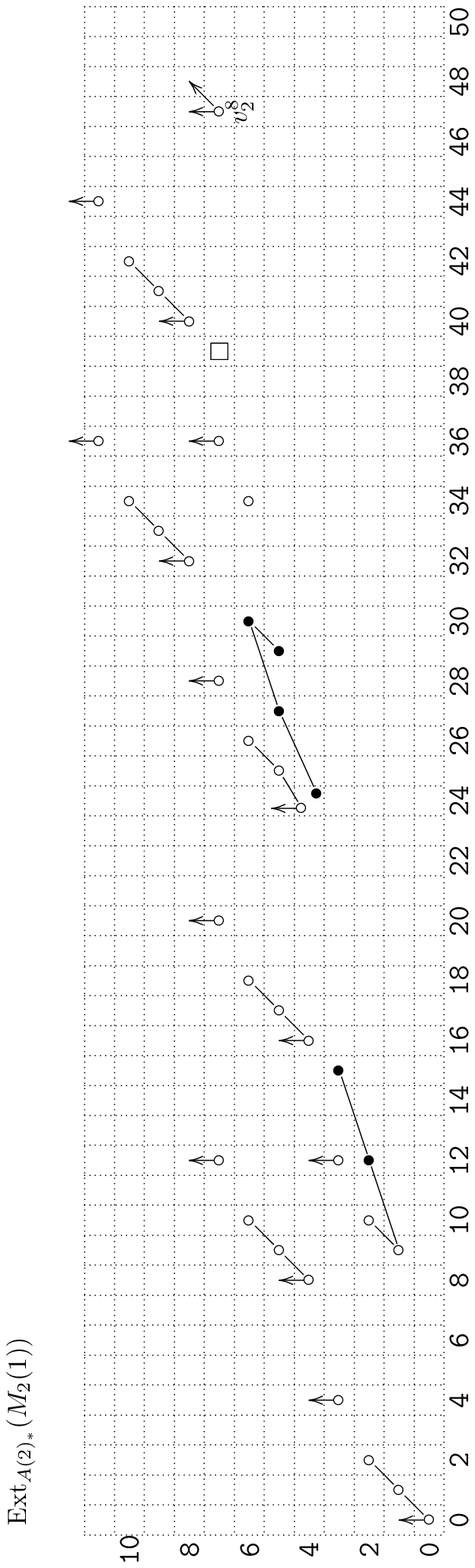}
\caption{}\label{fig:A2-bo_1}
\end{figure}

\begin{figure}
\includegraphics[width=0.488\textwidth]{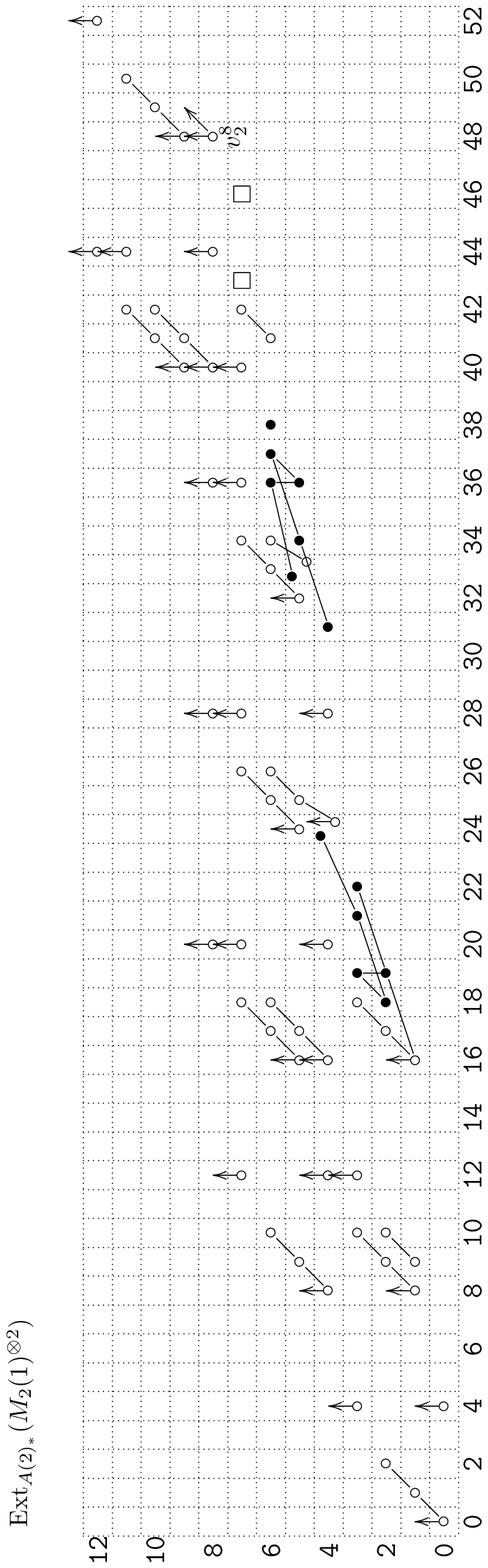}
\hfill
\includegraphics[width=0.472\textwidth]{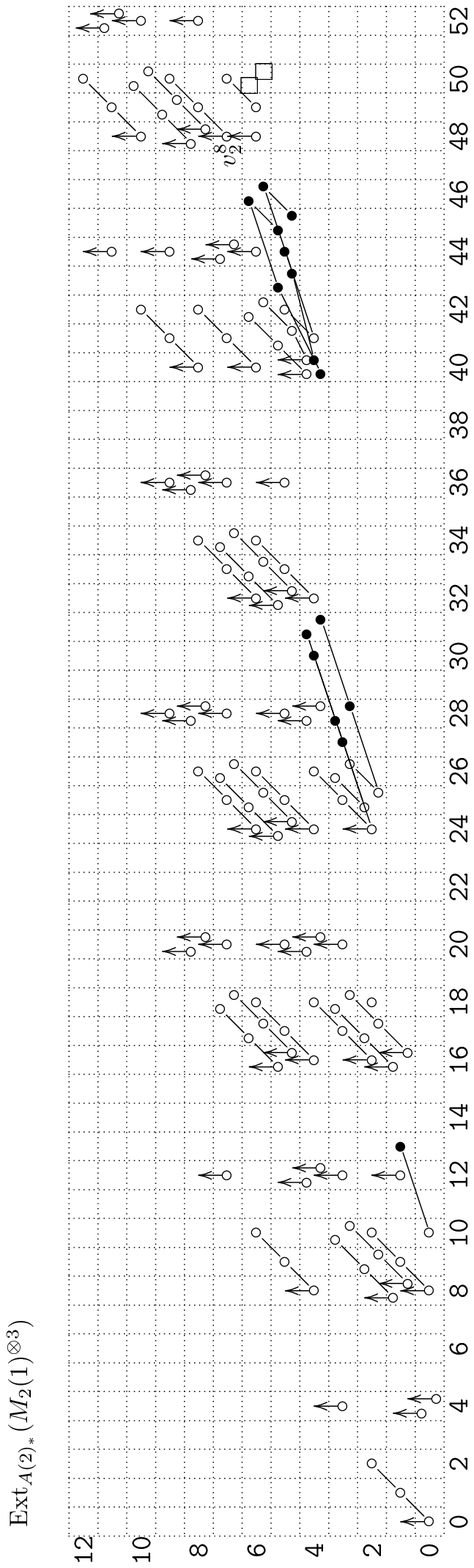}
\caption{}\label{fig:bo_1bo_1-bo_1bo_1bo_1}
\end{figure}

\begin{figure}
\includegraphics[width=0.460\textwidth]{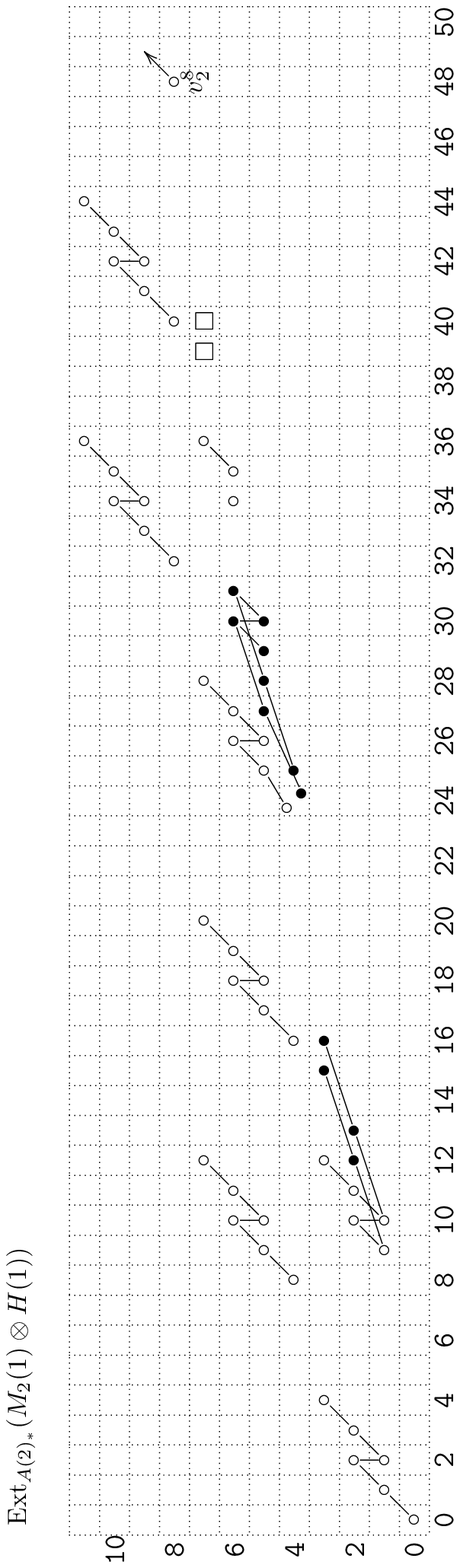}
\hfill
\includegraphics[width=0.500\textwidth]{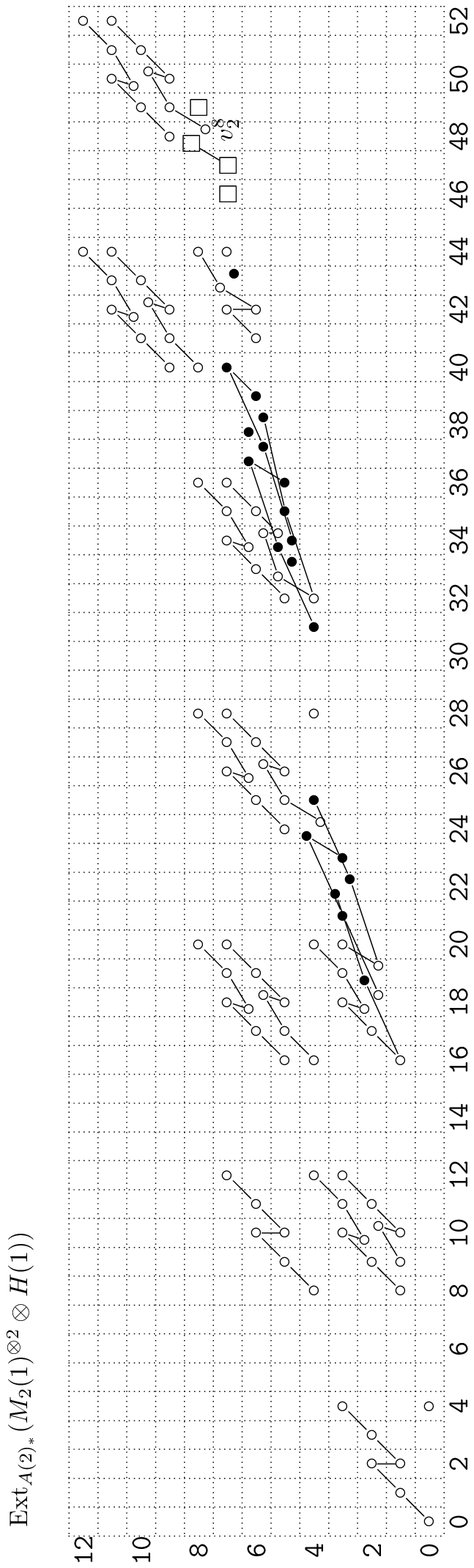}
\caption{}\label{fig:bo_1M1-bo_1bo_1M1}
\end{figure}

\begin{figure}
\includegraphics[width=0.520\textwidth]{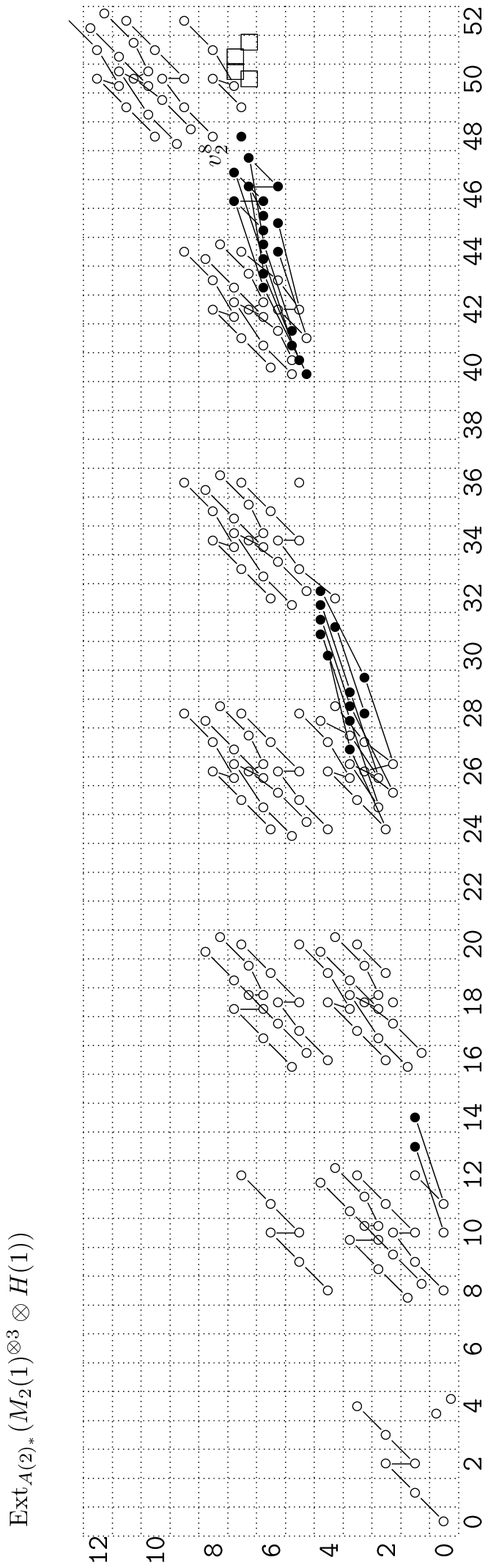}
\hfill
\includegraphics[width=0.440\textwidth]{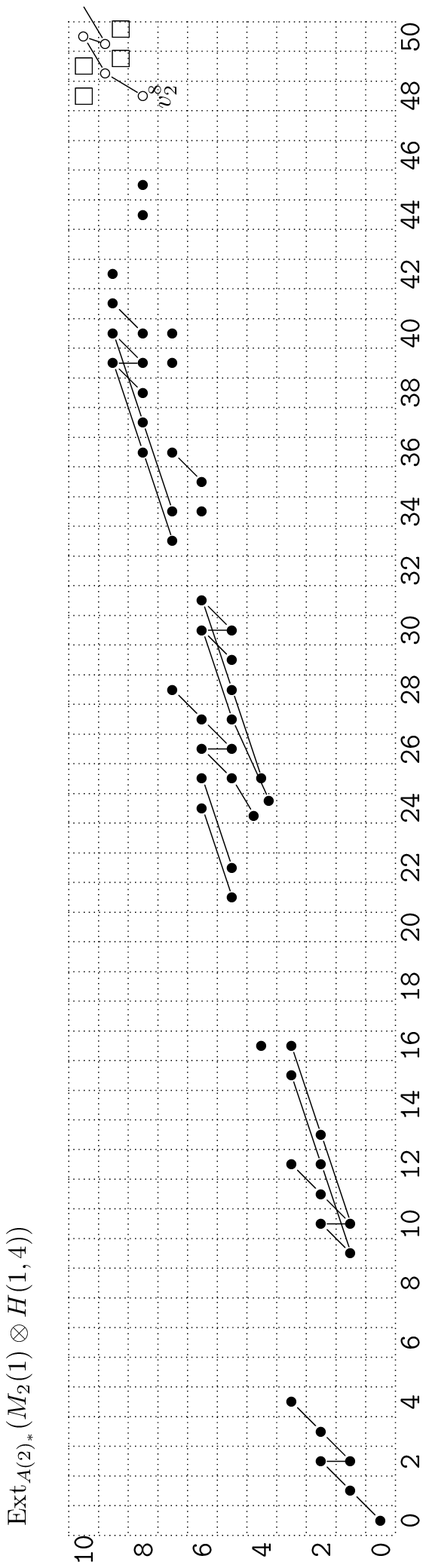}
\caption{}\label{fig:bo_1bo_1bo_1M1-bo_1M14}
\end{figure}

\begin{figure}
\includegraphics[width=0.37\textwidth]{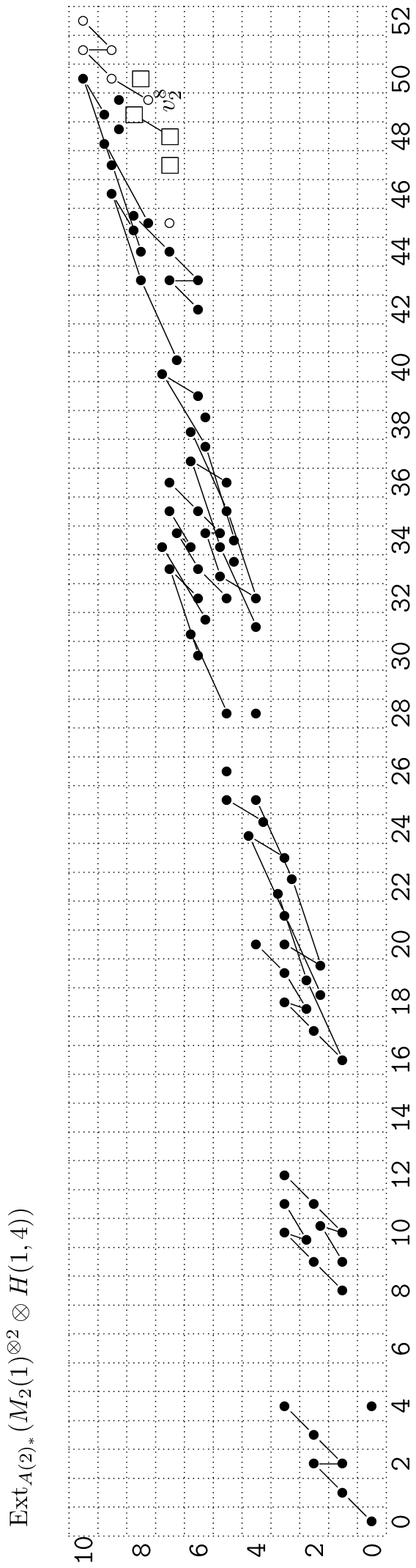}
\hfill
\includegraphics[width=0.43\textwidth]{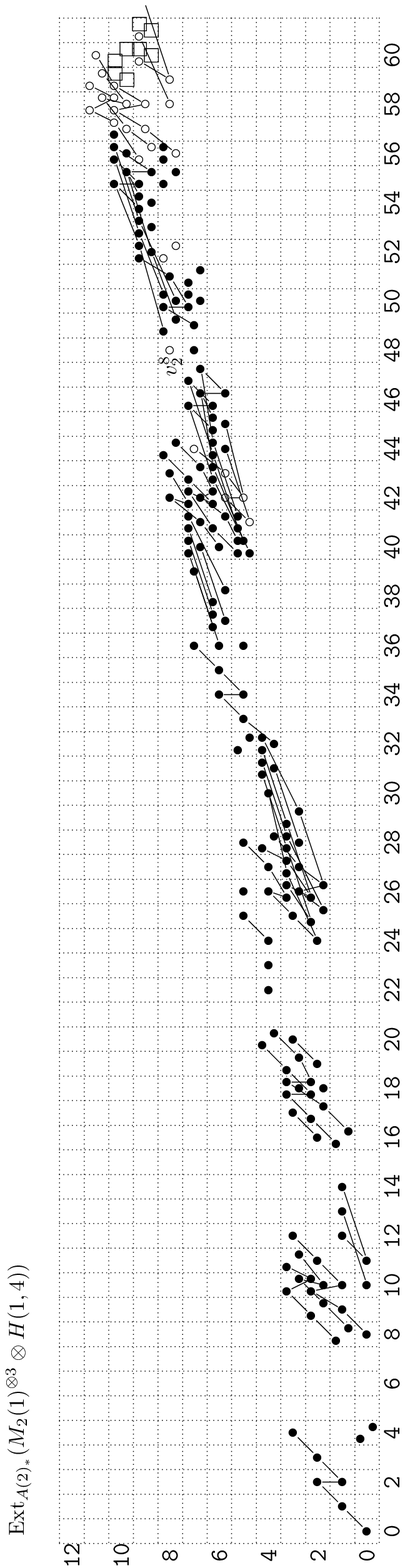}
\caption{}\label{fig:bo_1bo_1M14-bo_1bo_1bo_1M14}
\end{figure}

\subsection*{$\mathbf{Ext_{A(2)_*}(\FF_2)}$}
All of the elements are
$c_4 = v_1^4$-periodic, and $v_2^8$-periodic.  
Exactly one $v_1^4$ multiple
of each element is displayed with the $\bullet$ replaced by a $\circ$. 
Observe the wedge pattern beginning in $t-s = 35$.
This pattern is infinite, propagated horizontally by $h_{2,1}$-multiplication 
and vertically by $v_1$-multiplication.  Here, $h_{2,1}$ is the name of the
generator in the May spectral sequence of bidegree $(t-s,s) = (5,1)$, and
$h_{2,1}^4 = g$.

\subsection*{$\mathbf{Ext_{A(2)_*}(M_2(1)^{\otimes k})}$, for $\mathbf{k =
1,2,3}$}
Every element is $v_2^8$-periodic.
However, unlike $\Ext_{A(2)_*}(\FF_2)$, not every element of these Ext groups is
$v_1^4$-periodic.  Rather, it is the case that either an element $x \in
\Ext_{A(2)_*}(M_2(1)^{\otimes k})$ satisfies 
$v_1^4x = 0$, or it is $v_1^4$-periodic.
Each of the $v_1^4$-periodic 
elements fit into families which look like shifted and truncated copies of
$\Ext_{A(1)_*}(\FF_2)$, and are labeled with a $\circ$.  
We have only
included the beginning of these $v_1^4$-periodic patterns in the chart.  
The other
generators are labeled with a $\bullet$.  A $\Box$ indicates a
polynomial algebra $\FF_2[h_{2,1}]$.

\subsection*{$\mathbf{Ext_{A(2)_*}(M_2(1)^{\otimes k}\otimes H(1))}$, for
$\mathbf{k = 1,2,3}$}
The notation in these charts is identical to that in the charts for
$\Ext_{A(2)_*}(M_2(1)^{\otimes k})$, with the exception that the
$v_1^4$-periodic patterns are truncated shifted copies of
$\Ext_{A(1)_*}(H(1))$.

\subsection*{$\mathbf{Ext_{A(2)_*}(M_2(1)^{\otimes k}\otimes H(1,4))}$, for
$\mathbf{k = 1,2,3}$}
Because we have taken the cofiber of $v_1^4$, none of the elements are
$v_1^4$ periodic in these charts.  The generators of the first
$v_2^8$-periodic pattern are denoted with a $\bullet$ or a $\Box$, where
again a $\Box$ denotes a polynomial algebra on $h_{2,1}$.  In these charts, however, it
is not the case that every element is $v_2^8$-periodic: some elements in
the first lightening flash in the $0$-stem fail to be $v_2^8$-periodic.
We have conveyed this information by displaying the elements in the 
next $v_2^8$-pattern
with $\circ$.  With the exception of these first few generators, all of
the other generators are $v_2^8$-periodic.
\vfill

\subsection*{Methodology}

We explain how these charts were produced.  The computation of
$\Ext_{A(2)_*}(\FF_2)$ is well-known (see, for instance,
\cite{DavisMahowaldA(2)}).  The $A(2)_*$ 
comodule $M_2(1)$ can be described by the
following diagram of generators.
\begin{center}
\includegraphics{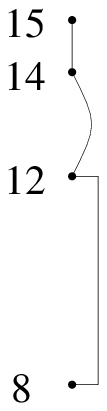}
\end{center}
Here, the dual action of the Steenrod algebra is encoded with a straight line
denoting $Sq^1_*$, a curved line denoting $Sq^2_*$, and the bracket
denoting $Sq^4_*$.  A computation of $\Ext_{A(2)_*}(M_2(1))$ can be found
in \cite{DavisMahowaldA(2)}.  The computation of
$\Ext_{A(2)_*}(M_2(1)^{\otimes 2})$ was obtained from $\Ext_{A(2)_*}(M_2(1))$ 
by inductively working up the skeletal
filtration of the second factor of $M_2(1)$.  The computation of 
$\Ext_{A(2)_*}(M_2(1)^{\otimes 3})$
was then obtained from $\Ext_{A(2)_*}(M_2(1)^{\otimes 2})$ 
by inductively working up the skeletal
filtration of the third factor of $M_2(1)$.  Along the way, because $H(1)$
occurs as a subcomodule of $M_2(1)$, we have computed
$\Ext_{A(2)_*}(M_2(1)^{\otimes k} \otimes H(1))$ for $k = 1,2$.  We then use
the long exact sequence induced by the triangle (\ref{eq:H(1)cofiber}) 
to obtain $\Ext_{A(2)_*}(M_2(1)^{\otimes 3} \otimes
H(1))$.

Each of these manual computations was independently verified by
R.R.~Bruner's computer program for computing $\Ext$ \cite{Bruner}.  
This computer program
constructs minimal resolutions of modules over the subalgebra $A(2)$.  We
also used the computer program to gain complete understanding of
$v_1^4$-periodicity in these $\Ext$ groups, as we now explain. Note
that there is an element
$$ v_1 \in \Ext^{1,3}_{A(2)_*}(H(1) \otimes H_*C\eta) $$
where $C\eta$ is the cofiber of $\eta \in \pi_1^s$.  We used Bruner's
programs to compute minimal resolutions for 
$$ \Ext_{A(2)_*}(M_2(1)^{\otimes k} \otimes H(1) \otimes H_*C\eta), \quad k =
1,2,3, $$
and read off all of the $v_1$-multiplicative structure in these $\Ext$
groups from the minimal resolutions.  We then used an $\eta$-Bockstein
spectral sequence to recover the $v_1^4$-multiplicative structure on 
$$ \Ext_{A(2)_*}(M_2(1)^{\otimes k} \otimes H(1)), \quad k = 1,2,3. $$
From this, the computation of
$$ \Ext_{A(2)_*}(M_2(1)^{\otimes k} \otimes H(1,4)), \quad k = 1,2,3 $$
was easily determined by the long exact sequence arising from the triangle
(\ref{eq:H(1,4)cofiber}).

\section{Reducing the computation to $M_2(1)^{\otimes k}$ for $k \le
3$}\label{sec:reduction}

\subsection*{Inductive Short Exact Sequences}

We will construct some short exact sequences that relate
the various Brown-Gitler comodules $N_1(j)$.  
We have an isomorphism
$$ (A(2)\mmod A(1))_* \cong \Lambda[\bar{\xi}_1^4, \bar{\xi}_2^2,
\bar{\xi}_3]. $$
Observe that there is an
isomorphism of $\FF_2$-vector spaces
$$ \tau : (A \mmod A(1))_* \xrightarrow{\cong} (A \mmod A(2))_* \otimes
(A(2) \mmod A(1))_* $$
given on the monomial basis by
$$ \tau(\bar{\xi}_1^{8i_1 + 4\epsilon_1} \bar{\xi}_2^{4i_2 + 2 \epsilon_2} 
\bar{\xi}_3^{2i_3 + \epsilon_3} \bar{\xi}_4^{i_4} \cdots )
= \bar{\xi}_1^{8i_1} \bar{\xi}_2^{4i_2}\bar{\xi}_3^{2i_3} \bar{\xi}_4^{i_4} \cdots
\otimes \bar{\xi}_1^{4\epsilon_1} \bar{\xi}_2^{2\epsilon_2}
\bar{\xi}_3^{\epsilon_3} $$
for $i_j \ge 0$ and $\epsilon_j = 0,1$.
The map $\tau$ is \emph{not} an isomorphism of $A(2)_*$-comodules.  For
instance, in $(A \mmod A(1))_*$
we have the coaction
$$ \psi(\bar{\xi}_1^4 \bar{\xi}_2^2) = \bxi_1^4 \bxi_2^2 \otimes 1 + \bxi_1^4
\otimes \bxi_2^2 + \bxi_2^2 \otimes \bxi_1^4 + 1 \otimes \bxi_1^4 \bxi_2^2
+ \bxi_1^6 \otimes \bxi_1^4 + \bxi_1^2 \otimes \bxi_1^8 $$
whereas in $(A\mmod A(2))_* \otimes (A \mmod A(1))_*$ we have
$$ \psi(1 \otimes \bxi_1^4 \bxi_2^2) = \bxi_1^4 \bxi_2^2 \otimes 1 \otimes
1 + \bxi_1^4
\otimes 1 \otimes \bxi_2^2 + \bxi_2^2 \otimes 1 \otimes \bxi_1^4 + 1
\otimes 1 \otimes \bxi_1^4 \bxi_2^2
+ \bxi_1^6 \otimes 1 \otimes \bxi_1^4. $$
However, there is a decreasing filtration 
$$ (A \mmod A(1))_* = F^0 (A \mmod A(1))_* \supset F^1 (A \mmod A(1))_*
\supset \cdots $$ 
of $A(2)_*$-comodules such that
$\tau$ induces an isomorphism of the associated graded $A(2)_*$-comodules
$$ \tau: 
E^0 (A\mmod A(1))_* \xrightarrow{\cong} (A \mmod A(2))_* \otimes (A(2)
\mmod A(1))_*. $$
The decreasing filtration is given as follows: under the isomorphism
$$ (A \mmod A(2))_* \cong \bigoplus_k M_2(k) $$
of $A(2)_*$-comodules given by Corollary~\ref{cor:splitting}, we define
$$ F^j(A \mmod A(1))_* := \tau^{-1}\left(\Big( \bigoplus_{k = j}^\infty 
M_2(k)\Big) \otimes
(A(2) \mmod A(1))_* \right). $$
Using the coproduct formula (\ref{eq:coproduct}) this is easily verified to
be a decreasing filtration by $A(2)_*$-comodules --- the coaction
preserves or raises the filtration.

Consider the quotients
$$ Q^j (A \mmod A(1))_* := (A \mmod A(1))_* / F^{j+1} (A \mmod A(1))_*. $$
The map $\tau$ induces isomorphisms of $\FF_2$-vector spaces
$$ \tau: Q^j (A \mmod A(1))_* \xrightarrow{\cong} N_2(j) \otimes (A(2) \mmod
A(1))_*. $$
Furthermore, the filtration $\{ F^k (A \mmod A(1))_* \}$ projects to
a finite decreasing filtration of $Q^j(A \mmod A(1))_*$ by
$A(2)_*$-comodules, such that $\tau$ induces an isomorphism of associated
graded $A(2)_*$-comodules
\begin{equation}\label{eq:E0tau}
\tau : E^0 Q^j(A \mmod A(1))_* \xrightarrow{\cong} N_2(j) \otimes (A(2)
\mmod A(1))_*. 
\end{equation}

\begin{lem}\label{lem:OddSeq}
There is a short exact sequence of $A(2)_*$-comodules:
$$
0 \rightarrow  \Sigma^{8j} N_1(j) \otimes N_1(1)
\rightarrow 
N_1(2j+1) \rightarrow 
Q^{j-1}(A\mmod A(1))_*
 \rightarrow 0.
$$
\end{lem}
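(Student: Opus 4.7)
The plan is to construct the sequence by taking the surjection $N_1(2j+1) \twoheadrightarrow Q^{j-1}(A\mmod A(1))_*$ to be the restriction of the canonical quotient $(A\mmod A(1))_* \twoheadrightarrow (A\mmod A(1))_*/F^j$ to $N_1(2j+1)$, and then identifying its kernel with $\Sigma^{8j} N_1(j) \otimes N_1(1)$.

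For surjectivity, it suffices to show that every class in $Q^{j-1}$ has a representative of filtration at most $4(2j+1)=8j+4$. Via $\tau^{-1}$ and the vector-space identification $Q^{j-1} \cong N_2(j-1) \otimes (A(2)\mmod A(1))_*$, one can choose a representative coming from a monomial whose $(A\mmod A(2))_*$-part has filtration at most $8(j-1)$ and whose $(A(2)\mmod A(1))_*$-part, living in the exterior algebra $\Lambda[\bxi_1^4, \bxi_2^2, \bxi_3]$, has filtration at most $4+4+4=12$. Summing the two bounds yields $8j+4$, as required.

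To identify the kernel, note that an element of $N_1(2j+1) \cap F^j$ has total filtration at most $8j+4$ and, under $\tau$, a $(A\mmod A(2))_*$-part of filtration divisible by $8$ and at least $8j$. Since $(A(2)\mmod A(1))_*$-filtrations are nonnegative, the $(A\mmod A(2))_*$-part must lie in $M_2(j)$ exactly, while the $(A(2)\mmod A(1))_*$-part has filtration at most $4$ and is therefore a linear combination of $1, \bxi_1^4, \bxi_2^2, \bxi_3$. This span is precisely $N_1(1)$ regarded as an $A(2)_*$-subcomodule of $(A(2)\mmod A(1))_*$; its closure under the coaction follows directly from the coproduct formula \eqref{eq:coproduct} applied to the three generators, whose right-hand coaction factors all lie in the given span.

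The main subtle point will be upgrading this $\FF_2$-vector space identification to an isomorphism of $A(2)_*$-comodules, since $\tau$ itself is not a comodule map on the nose. The saving grace is that $N_1(2j+1) \cap F^{j+1}=0$, because any monomial in $F^{j+1}$ has filtration at least $8j+8$, so $N_1(2j+1) \cap F^j$ injects into the quotient $F^j/F^{j+1}$. One can then invoke the $A(2)_*$-comodule isomorphism $F^j/F^{j+1} \cong M_2(j) \otimes (A(2)\mmod A(1))_*$, which is the top graded piece of \eqref{eq:E0tau}, to identify the image of the kernel as the $A(2)_*$-subcomodule $M_2(j) \otimes N_1(1)$. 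Composing with the isomorphism $M_2(j) \cong \Sigma^{8j} N_1(j)$ from the preceding corollary then produces the desired short exact sequence of $A(2)_*$-comodules.
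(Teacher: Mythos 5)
Your proof is correct and follows essentially the same route as the paper's: you take the surjection onto $Q^{j-1}(A\mmod A(1))_*$ to be the restriction of the canonical quotient, establish surjectivity from the filtration bound $8(j-1)+12=8j+4$ via $\tau$, and identify the kernel $N_1(2j+1)\cap F^j$ with $M_2(j)\otimes N_1(1)\cong \Sigma^{8j}N_1(j)\otimes N_1(1)$. The only difference is that you make explicit (using $N_1(2j+1)\cap F^{j+1}=0$ and the associated-graded comodule isomorphism) why this vector-space identification of the kernel is an isomorphism of $A(2)_*$-comodules, a point the paper's proof leaves implicit.
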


\begin{lem}\label{lem:EvenSeq}
There is an exact sequence of $A(2)_*$-comodules:
$$
0
\rightarrow \Sigma^{8j} N_1(j)
\rightarrow N_1(2j)
\rightarrow Q^{j-1}(A \mmod A(1))_*
\rightarrow \Sigma^{8j+9} N_1(j-1) 
\rightarrow 0.
$$
\end{lem}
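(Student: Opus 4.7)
The plan is to prove the lemma by carefully analyzing how $N_1(2j)$ sits inside $(A\mmod A(1))_*$ with respect to the decreasing filtration $\{F^k(A\mmod A(1))_*\}$ introduced above. I will define the middle map of the proposed four-term sequence as the composition
\[
N_1(2j) \hookrightarrow (A\mmod A(1))_* \twoheadrightarrow Q^{j-1}(A\mmod A(1))_*,
\]
and identify its kernel with $\Sigma^{8j} N_1(j)$ and its cokernel with $\Sigma^{8j+9} N_1(j-1)$.

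First I would use the monomial basis description of $\tau$ to compute $F^j(A\mmod A(1))_* \cap N_1(2j)$. Writing a generic element as $\bar\xi_1^{8i_1+4\epsilon_1}\bar\xi_2^{4i_2+2\epsilon_2}\bar\xi_3^{2i_3+\epsilon_3}\bar\xi_4^{i_4}\cdots$, the filtration is $8k + 4\epsilon$ where $k=\sum i_l$ and $\epsilon=\epsilon_1+\epsilon_2+\epsilon_3$; requiring both $k\ge j$ and $8k+4\epsilon \le 8j$ forces $k=j$ and $\epsilon=0$. Thus the intersection is concentrated at the top filtration level and coincides, via the associated graded identification (\ref{eq:E0tau}), with the comodule $M_2(j)\otimes\FF_2\cong M_2(j)\cong \Sigma^{8j}N_1(j)$. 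Because the piece lives in a single filtration layer, the associated graded iso is a genuine $A(2)_*$-comodule isomorphism, giving the first map of the sequence.

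Next, for the image of $N_1(2j)$ in $Q^{j-1}(A\mmod A(1))_*$, the same inequality $8k+4\epsilon\le 8j$ with $k\le j-1$ shows that at filtration levels $k\le j-2$ the image is everything, while at level $k=j-1$ only the sub-vector space $I\subset (A(2)\mmod A(1))_*$ spanned by the monomials with $\epsilon_1+\epsilon_2+\epsilon_3\le 2$ appears. I will verify directly from the coproduct formula (\ref{eq:coproduct}) that the filtration on $(A(2)\mmod A(1))_*$ is preserved by the $A(2)_*$-coaction, so that $I$ is in fact an $A(2)_*$-subcomodule. The quotient $(A(2)\mmod A(1))_*/I$ is then one-dimensional, spanned by the top class $\bar\xi_1^4\bar\xi_2^2\bar\xi_3$ of internal degree $4+6+7=17$, hence isomorphic to $\Sigma^{17}\FF_2$ as an $A(2)_*$-comodule.

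Assembling these pieces: the cokernel $Q^{j-1}(A\mmod A(1))_*/\mathrm{Im}(N_1(2j))$ vanishes at all filtration levels $\le j-2$ and equals $M_2(j-1)\otimes \Sigma^{17}\FF_2\cong \Sigma^{17}M_2(j-1)\cong \Sigma^{8j+9}N_1(j-1)$ at the single level $j-1$. Since it is again concentrated in one filtration layer, (\ref{eq:E0tau}) upgrades this to a genuine $A(2)_*$-comodule isomorphism, yielding the final map of the four-term sequence. The main technical point to be careful about is that $\tau$ is only an isomorphism on associated gradeds, so each kernel, image, and cokernel identification must be confirmed to live in a single filtration layer before invoking (\ref{eq:E0tau}); once that is checked the remaining work is the combinatorial bookkeeping on exponents described above.
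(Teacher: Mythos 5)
Your proposal is correct and takes essentially the same route as the paper: both analyze $N_1(2j)$ inside $(A \mmod A(1))_*$ via $\tau$ and the filtration $F^\bullet$, sort monomials by the bound $8k+4\epsilon \le 8j$ levelwise, and identify the kernel with $M_2(j)\otimes\FF_2\{1\} \cong \Sigma^{8j}N_1(j)$ and the cokernel with $M_2(j-1)\otimes\FF_2\{\bxi_1^4\bxi_2^2\bxi_3\} \cong \Sigma^{8j+9}N_1(j-1)$. The only (harmless) difference is in how the comodule structure is certified: the paper chases the diagram of inclusions and an induced map of short exact sequences to see that $\alpha$, $\beta$, $\gamma$ are comodule maps, whereas you get the middle map for free as a composite of comodule maps and instead check that the span of the seven monomials with $\epsilon \le 2$ is an $A(2)_*$-subcomodule of $(A(2)\mmod A(1))_*$ before invoking the associated-graded identification on the single surviving filtration layer.
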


\begin{proof}[Proof of Lemma~\ref{lem:OddSeq}]
Since the elements of $(A(2) \mmod A(1))_*$ have Brown-Gitler 
filtration at most $12$,
the image of the composite
$$ N_2(j-1) \otimes (A(2)\mmod A(1))_* \hookrightarrow (A \mmod A(2))_*
\otimes (A(2) \mmod A(1))_* \xrightarrow{\tau^{-1}} (A \mmod A(1))_* $$
lies in $N_1(2j+1)$, giving a surjection of $A(2)_*$-comodules
$$ \rho : N_1(2j+1)
\twoheadrightarrow
Q^{j-1} (A \mmod A(1))_*. $$
As $\FF_2$-vector spaces, we have
$$ \tau(N_1(2j+1)) = N_2(j-1) \otimes (A(2) \mmod A(1))_* \oplus M_2(j)
\otimes N_1(1) $$
where the Brown-Gitler comodule $N_1(1)$ is identified as the $A(2)_*$-subcomodule
$$ N_1(1) = \FF_2\{ 1, \bar{\xi}_1^4, \bar{\xi}_2^2, \bar{\xi}_3 \} \subset
(A(2)\mmod A(1))_*.
$$
We deduce that the kernel of $\rho$ is 
$$ M_2(j) \otimes N_1(1) \cong \Sigma^{8j} N_1(j) \otimes
N_1(1). $$
\end{proof}

\begin{proof}[Proof of Lemma~\ref{lem:EvenSeq}]
As an $\FF_2$-vector space, the image of $N_1(2j)$ in $(A \mmod A(2))_*
\otimes (A(2)\mmod A(1))_*$ under the isomorphism $\tau$ is 
given by
$$ \tau(N_1(2j)) \cong 
\left( \begin{array}{c}
N_2(j-2) \otimes (A(2)\mmod A(1))_* \\
\oplus \\
M_2(j-1) \otimes \FF_2\{ 1, \bar{\xi}_1^4, \bar{\xi}_2^2, \bar{\xi}_3, 
\bar{\xi}_1^4\bar{\xi}_2^2,\bar{\xi}_1^4\bar{\xi}_3, 
\bar{\xi}_2^2\bar{\xi}_3 \} \\
\oplus \\
M_2(j) \otimes \FF_2\{1\}.
\end{array} \right)
$$
Thus, at least on the level of $\FF_2$-vector spaces, we have an exact
sequence
\begin{multline*}
0 \rightarrow M_2(j) \otimes \FF_2\{1\} 
\xrightarrow{\alpha} N_1(2j) 
\xrightarrow{\beta} Q^{j-1} (A \mmod A(1))_*
\\
\xrightarrow{\gamma} M_2(j-1)\otimes \FF_2\{\bar{\xi}_1^4\bar{\xi}_2^2\bar{\xi}_3\}
\rightarrow 0
\end{multline*}
We just need to prove that these are maps of $A(2)_*$-comodules.
The map $\gamma$ is clearly a map of $A(2)_*$-comodules.
We have the following diagram of inclusions of $A(2)_*$-comodules.
\begin{equation}\label{diag:inclusions}
\xymatrix{
M_2(j) \otimes \FF_2\{1\} \ar@{^{(}->}[r]^-{\alpha} \ar@{^{(}->}[d] &
N_1(2j) \ar@{^{(}->}[r]^-{\delta} \ar@{^{(}->}[d] &
Q^j (A\mmod A(1))_*  
\\
(A \mmod A(2))_* \ar@{^{(}->}[r] &
(A \mmod A(1))_* \ar@{->>}[ru] 
}
\end{equation}
In particular, the map $\alpha$ is a map of $A(2)_*$-comodules.
Let $K$ be the cokernel of $\alpha$.  Then we get an induced map of short
exact sequences of $A(2)_*$-comodules:
$$
\xymatrix@C-0.6em{
M_2(j)\otimes \FF_2\{1\} \ar[r]^\alpha \ar[d] &
N_1(2j) \ar[r]^{\beta_1} \ar[d]_\delta &
K \ar@{.>}[d]^{\beta_2} 
\\
M_2(j) \otimes (A(2)\mmod A(1))_* \ar[r] &
Q^j(A\mmod A(1))_* \ar[r] &
Q^{j-1}(A\mmod A(1))_* 
}
$$
We deduce that the map $\beta$ is a map of $A(2)_*$-comodules, because it is
given by the composite $\beta_2 \circ \beta_1$ of $A(2)_*$-comodule maps.
\end{proof}

\subsection*{Vanishing lines}

We reduce the computations needed to those of $M_2(1)^{\otimes k}$ for $k
\le 3$
using vanishing lines for modified Adams $E_2$ terms. 
Note that after a finite range, $\Ext_{A(2)_*}(H(1,4))$ has a vanishing
line of slope $1/5$.

\begin{lem}\label{lem:vanishinglines}
We have
$$ \Ext^{s,t}_{A(2)_*}(N_1(j) \otimes H(1,4)) = 0 $$
for 
$$ s > \max \left\{ \frac{(t-s)+17}{7}, \frac{(t-s) + a_j}{6}, \frac{(t-s) + b_j}{5} \right\} $$
and the constants $a_j$ and $b_j$ are inductively defined by
\begin{align*}
a_0 & = 21, \\
b_0 & = 9, \\
a_1 & = 15, \\
b_1 & = 2, \\
a_{2j} & = \max \{ a_{j-1}-8j-2, a_j-8j \}, \\
b_{2j} & = \max \{ b_{j-1}-8j-3, b_j-8j \}, \\
a_{2j+1} & = a_j-8j, \\
b_{2j+1} & = b_j-8j.
\end{align*}
\end{lem}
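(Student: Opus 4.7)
The proof will be a double induction. The outer induction is on $j$, and the vanishing line for $\Ext^{s,t}_{A(2)_*}(N_1(j)\otimes H(1,4))$ is broken into a piecewise linear bound with three slopes, $1/7$, $1/6$, $1/5$; the constant on the steepest slope does not depend on $j$, while the constants $a_j, b_j$ on the shallower slopes drift downward as $j$ grows according to the stated recursion. The base cases $j=0$ and $j=1$ are read off from the charts of Section~\ref{sec:Charts}: for $j=0$ this is the vanishing line visible in the chart of $\Ext_{A(2)_*}(H(1,4))$, and for $j=1$ it is the vanishing line visible in the chart of $\Ext_{A(2)_*}(M_2(1)\otimes H(1,4))$ after reindexing via the isomorphism $M_2(1)\cong\Sigma^{8}N_1(1)$. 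The constants $(a_0,b_0,a_1,b_1)=(21,9,15,2)$ are chosen as the smallest values compatible with those pictures.

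For the inductive step I would tensor the short exact sequence of Lemma~\ref{lem:OddSeq} (when the index is $2j+1$), or the four-term exact sequence of Lemma~\ref{lem:EvenSeq} (when the index is $2j$), with $H(1,4)$ and take the long exact sequence in $\Ext_{A(2)_*}(-)$. Splitting the four-term sequence into two short exact sequences and using standard LES bookkeeping, it suffices to bound the Ext of each of three kinds of summands: (a) the shifted term $\Sigma^{8j}N_1(j)\otimes H(1,4)$ (or $\Sigma^{8j+9}N_1(j-1)\otimes H(1,4)$), which is controlled directly by the outer inductive hypothesis, and whose $\Sigma^{8j}$-shift is exactly what produces the $-8j$ in the recursion (with the extra $-9$ and the unit increase in cohomological index absorbed into the $-2$, $-3$ corrections in the even case); (b) the term $\Sigma^{8j}N_1(j)\otimes N_1(1)\otimes H(1,4)$ appearing in the odd case, which I would handle by filtering $N_1(1)=\FF_2\{1,\bxi_1^4,\bxi_2^2,\bxi_3\}$ by its four-step skeletal filtration and iterating the outer inductive hypothesis on the subquotients (each contributing a bound that is at worst a shift of $\Ext_{A(2)_*}(N_1(j)\otimes H(1,4))$ by a degree $\le 7$); (c) the term $Q^{j-1}(A\mmod A(1))_*\otimes H(1,4)$, which is the heart of the argument.

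For (c), I would use the decreasing filtration of $Q^{j-1}(A\mmod A(1))_*$ by $A(2)_*$-subcomodules constructed in Section~\ref{sec:BrownGitler}, whose associated graded is identified by $\tau$ with $N_2(j-1)\otimes(A(2)\mmod A(1))_*$. This filtration produces a convergent spectral sequence with
\[
E_1 \cong \Ext_{A(2)_*}\bigl(N_2(j-1)\otimes(A(2)\mmod A(1))_*\otimes H(1,4)\bigr) \cong \Ext_{A(1)_*}\bigl(N_2(j-1)\otimes H(1,4)\bigr),
\]
the last step being change of rings. A vanishing line at this $E_1$-term then forces the desired vanishing line on $Q^{j-1}\otimes H(1,4)$, and the resulting bound is exactly the one contributing the $a_{j-1}-8j-2$ and $b_{j-1}-8j-3$ terms in the recursion for the even case.

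The main obstacle will be establishing the required vanishing line for $\Ext_{A(1)_*}(N_2(j-1)\otimes H(1,4))$ uniformly in $j$. The cleanest way I see to handle this is an auxiliary induction on the Brown-Gitler index, entirely parallel to the outer induction but one level smaller: filter $N_2(k)$ by short exact sequences of the form given in Lemmas~\ref{lem:OddSeq}--\ref{lem:EvenSeq} but for $(A(2),A(1))$ replaced by $(A(1),A(0))$, reduce the resulting $Q$-term via change of rings to $\Ext_{A(0)_*}$ (where the answer is completely explicit as $\FF_2$ is $v_0$-torsion-free up to $h_0$-multiplication), and feed the output back. The combinatorics of compiling the three slopes through these two nested inductions, and keeping track of the shifts in both $t$ and $s$, will be the main bookkeeping challenge; everything else is routine long-exact-sequence chasing.
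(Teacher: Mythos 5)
Your overall architecture is the same as the paper's: base cases $j=0,1$ read off the charts of Sections~\ref{sec:Charts} and \ref{sec:tmfM14}, induction on $j$ via Lemmas~\ref{lem:OddSeq} and \ref{lem:EvenSeq}, long-exact-sequence bookkeeping in which the $\Sigma^{8j}$ and $\Sigma^{8j+9}$ shifts together with the unit shift in $s$ from the connecting map produce exactly the constants $a_j-8j$, $a_{j-1}-8j-2$, $b_j-8j$, $b_{j-1}-8j-3$, a cell filtration of $N_1(1)$ to handle the odd-case kernel $\Sigma^{8j}N_1(j)\otimes N_1(1)\otimes H(1,4)$ (whose bottom cell dominates, consistent with $a_{2j+1}=a_j-8j$), and the $Q^{j-1}(A\mmod A(1))_*$ term treated via the filtration spectral sequence coming from (\ref{eq:E0tau}) together with the change-of-rings isomorphism down to $\Ext_{A(1)_*}(N_2(j-1)\otimes H(1,4))$. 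Up to that point your plan and the paper's proof coincide.

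The gap is in what you call the main obstacle, the uniform-in-$j$ vanishing line for $\Ext_{A(1)_*}(N_2(j-1)\otimes H(1,4))$. First, the auxiliary induction you propose is aimed at the wrong comodules: the analogues of Lemmas~\ref{lem:OddSeq}--\ref{lem:EvenSeq} with $(A(2),A(1))$ replaced by $(A(1),A(0))$ decompose the integral Brown--Gitler comodules $N_0(k)$ sitting inside $(A\mmod A(0))_*$, with $Q$-terms built from $N_1(\cdot)\otimes(A(1)\mmod A(0))_*$; they say nothing about $N_2(k)\subset (A\mmod A(2))_*$, whose analogous decomposition would have to go up a level, through $(A(3)\mmod A(2))_*$, after which the change-of-rings step over $A(1)_*$ no longer matches. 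Second, no such induction is needed, which is how the paper closes the argument: $\Ext_{A(1)_*}(H(1,4))$ is a finite, explicitly displayed computation, and it vanishes for $s>\frac{(t-s)+17}{7}$. Since $N_2(j-1)$ is concentrated in non-negative internal degrees, filtering it by degree gives an Atiyah--Hirzebruch-type spectral sequence whose $E_1$-term is a direct sum of copies of $\Ext_{A(1)_*}(H(1,4))$ shifted only to the right, so the slope-$1/7$ line with the same constant $17$ holds for $\Ext_{A(1)_*}(N_2(j-1)\otimes H(1,4))$ uniformly in $j$; and because its slope is strictly smaller than $1/6$ and $1/5$, this term never perturbs the constants $a_j,b_j$. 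You should replace your nested induction with this one-line observation (or, if you insist on a structural argument, with a correct $A(3)$-level decomposition, which would be substantially more work for no gain).
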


\begin{proof}
The case of $j = 0,1$ is obtained by examining 
Figures~\ref{fig:bo_1bo_1bo_1M1-bo_1M14} and \ref{fig:tmfASS12}.  The case
of $j \ge 2$ is established by induction using Lemmas~\ref{lem:OddSeq} and
\ref{lem:EvenSeq}.
The terms involving $Q^j(A \mmod A(1))_*$ are handled using the spectral
sequence
$$ \Ext^{s,t}_{A(2)_*}(N_2(j) \otimes (A(2) \mmod A(1))_* \otimes H(1,4)) 
\Rightarrow \Ext^{s,t}_{A(2)_*}(Q^j(A \mmod A(1))_* \otimes
H(1,4)) $$
induced from (\ref{eq:E0tau}),
and the change-of-rings isomorphism
$$
\Ext^{s,t}_{A(2)_*}(N_2(j) \otimes (A(2)\mmod A(1))_* \otimes H(1,4)) 
\cong \Ext^{s,t}_{A(1)_*}(N_2(j) \otimes H(1,4)).
$$
The only non-zero values values of $\Ext^{s,t}_{A(1)_*}(H(1,4))$ are
displayed below.
\begin{center}
\includegraphics{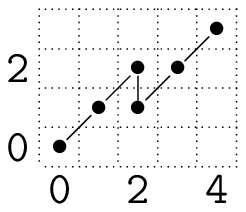}
\end{center}
In particular, we see that $\Ext^{s,t}_{A(1)_*}(H(1,4))$ is zero for $s >
\frac{(t-s)+17}{7}$.
\end{proof}

We extract the following estimate.

\begin{lem}\label{lem:vanishing1}
Suppose that $j_1, \ldots, j_n$ is a sequence of positive integers such
that for some $i$, $j_i \ge 2$.  Then we have
$$ \Ext^{s,t}_{A(2)_*}(M_2(j_1) \otimes \cdots \otimes M_2(j_n)[-n] \otimes H(1,4)) = 0 $$
for
$$
s > 
\max \left\{ \frac{(t-s)+17}{7}, \frac{(t-s) + 2}{6}, \frac{(t-s) - 12}{5}
\right\}.
$$
\end{lem}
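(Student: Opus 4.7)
The plan is to reduce to Lemma~\ref{lem:vanishinglines} by isolating a single factor with index $\ge 2$ and absorbing the remaining factors via a skeletal filtration argument. Using the isomorphism $M_2(j) \cong \Sigma^{8j} N_1(j)$ from the corollary to Lemma~\ref{lem:phi_i} and the symmetry of the tensor product in $\mc{D}_{A(2)_*}$, I may assume $j_1 \ge 2$ and distribute the $[-n]$ shift as one $[-1]$ per factor, so the object in question becomes
$$ \Sigma^{8j_1} N_1(j_1)[-1] \otimes H(1,4) \otimes \bigotimes_{i=2}^{n} \Sigma^{8j_i} N_1(j_i)[-1]. $$

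The base case $n = 1$, $j_1 = 2$ follows directly from Lemma~\ref{lem:vanishinglines}. Evaluating the recursion for $a_j$ and $b_j$ gives $a_2 = 11$ and $b_2 = -2$, while the shift $\Sigma^{16}[-1]$ modifies the intercept at slope $1/m$ by $(m+1) - 16$. This yields intercepts $(9, 2, -12)$ at slopes $1/7, 1/6, 1/5$, which are all bounded by the claimed $(17, 2, -12)$. For $j_1 \ge 3$, a straightforward induction on the recursion shows $a_{j_1} - 8j_1 \le -5$ and $b_{j_1} - 8j_1 \le -18$, so the single-factor bound is only stronger.

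For $n \ge 2$, I would induct on $n$, incorporating one extra factor $\Sigma^{8j_i} N_1(j_i)[-1]$ at a time. The key claim is that this operation preserves (or improves) the vanishing line. For this I would use the skeletal (internal degree) filtration on $N_1(j_i)$: by the coproduct formula~(\ref{eq:coproduct}), the non-counital part of $\psi(x)$ for $x$ of internal degree $d$ lies in $\br{A(2)}_* \otimes F_{d-1} N_1(j_i)$, so the associated graded of the skeletal filtration consists of trivial $A(2)_*$-comodules $\Sigma^d \FF_2^{\oplus r_d}$ for $0 \le d \le 8j_i - 1$. The resulting filtration spectral sequence has $E_1$-page isomorphic to $\bigoplus_d \Ext^{s, t-d}_{A(2)_*}(M)^{\oplus r_d}$, whose vanishing line at each slope coincides with that of $\Ext^{s,t}_{A(2)_*}(M)$ (dominated by the $d = 0$ summand). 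Combining with the $\Sigma^{8j_i}[-1]$ shift then changes the intercept at slope $1/m$ by $(m+1) - 8j_i \le 0$ for $j_i \ge 1$, so the claimed vanishing line is preserved.

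The main obstacle is the skeletal-filtration argument itself: once one verifies the triviality of the coaction on the associated graded (which is essentially a consequence of the degree-preservation in~(\ref{eq:coproduct}) and counitality), the remaining bookkeeping of degree shifts is routine. A minor technicality is handling the derived shifts $[-1]$ inside $\mc{D}_{A(2)_*}$, but these interact with the filtration spectral sequence purely through an overall $s$-shift, which is already built into the slope-$1/m$ intercept calculation.
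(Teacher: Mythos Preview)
Your proposal is correct and follows essentially the same approach as the paper: reduce to $j_1 \ge 2$, handle the base case $n=1$ via Lemma~\ref{lem:vanishinglines} together with the explicit shift computation, and for the inductive step filter the extra tensor factor $M_2(j_n)$ by internal degree to obtain an Atiyah--Hirzebruch type spectral sequence whose $E_1$-page inherits (and only improves) the vanishing line. The paper is terser about the filtration step---it simply invokes an ``Atiyah--Hirzebruch type spectral sequence'' without spelling out the triviality of the associated graded coaction---but the content is the same.
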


\begin{proof}
Assume that $n = 1$, and set $j$ equal to  $j_1 \ge 2$.  
By Lemma~\ref{lem:vanishinglines}, we have
$$ \Ext_{A(2)_*}(M_2(j)[-1] \otimes H(1,4)) = 0 $$
if
$$ s > \max \left\{ \frac{(t-s)-8j+8+17}{7}, \frac{(t-s)-8j+7+a_j}{6}, \frac{(t-s)-8j+6+b_j}{5}
\right\}. $$
It therefore suffices to prove that the following inequalities are
satisfied:
\begin{align*}
17 & \ge 17 - 8j + 8, \\
2 & \ge a_j - 8j + 7, \\
-12 & \ge b_j - 8j + 6.
\end{align*}
The inequalities are true for $j = 2,3$.
By induction, these inequalities hold for all $j$.

We now induct on $n$.  We may as well assume that $j_1 \ge 2$.  Assume that
$$ \Ext^{s,t}_{A(2)_*}(M_2(j_1) \otimes \cdots \otimes M_2(j_{n-1}) [-n+1] \otimes H(1,4))
= 0 $$
for 
$$
s > 
\max \left\{ \frac{(t-s)+17}{7}, \frac{(t-s) + 2}{6}, \frac{(t-s) - 12}{5}
\right\}.
$$
By filtering the $A_*$-comodule $M_2(j_n)$
by degree, we obtain an Atiyah-Hirzebruch type spectral sequence which
converges to  
$$
\Ext^{s,t}_{A(2)_*}(M_2(j_1) \otimes \cdots \otimes M_2(j_n)[-n] \otimes H(1,4))
$$
and whose $E_1$-page is given by
$$
\bigoplus_{x} \Ext^{s,t}_{A(2)_*} (\Sigma^{\abs{x}} M_2(j_1) \otimes \cdots 
\otimes M_2(j_{n-1})[-n] \otimes H(1,4)).
$$
where $x$ ranges over an $\FF_2$-basis of $M_2(j_n)$.  The smallest value
$\abs{x}$ can take is $8$, in the case $j_1 = 1$.
By our inductive hypothesis, we have
$$
\Ext^{s,t}_{A(2)_*} (\Sigma^{8} M_2(j_1) \otimes \cdots 
\otimes M_2(j_{n-1})[-n] \otimes H(1,4)) = 0
$$
for
$$
s > 
\max \left\{ \frac{(t-s)+17}{7}, \frac{(t-s) + 1}{6}, \frac{(t-s) - 14}{5}
\right\}.
$$
This verifies the inductive step.
\end{proof}

\begin{lem}\label{lem:vanishing2}
Suppose that $n$ is greater than $3$.  Then we have
$$ \Ext^{s,t}_{A(2)_*}(M_2(1)^{\otimes n}[-n] \otimes H(1,4)) = 0 $$
for
$$ s > \max \left\{ \frac{(t-s)+17}{7}, \frac{(t-s)+4}{6}, \frac{(t-s)-17}{5} 
\right\}. $$
\end{lem}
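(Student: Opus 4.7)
The plan is to prove the statement by induction on $n$, with base case $n=4$. The argument mirrors the inductive step in the proof of Lemma~\ref{lem:vanishing1}, replacing the input from Lemma~\ref{lem:vanishinglines} by the explicit chart computation of $\Ext^{*,*}_{A(2)_*}(M_2(1)^{\otimes 3} \otimes H(1,4))$.

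For the base case $n=4$, filter the fourth tensor factor of $M_2(1)$ by internal degree, producing a finite, convergent Atiyah--Hirzebruch-type spectral sequence with $E_1$-page
$$ \bigoplus_x \Ext^{s,t}_{A(2)_*}(\Sigma^{\abs{x}} M_2(1)^{\otimes 3}[-4] \otimes H(1,4)) \cong \bigoplus_x \Ext^{s-4,\, t-\abs{x}}_{A(2)_*}(M_2(1)^{\otimes 3} \otimes H(1,4)), $$
where $x$ ranges over an $\FF_2$-basis of $M_2(1)$. Since $M_2(1) \cong \Sigma^8 N_1(1)$, we have $\abs{x} \geq 8$, with equality attained by the bottom class. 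A vanishing line for the unshifted object $\Ext^{*,*}_{A(2)_*}(M_2(1)^{\otimes 3} \otimes H(1,4))$ is read directly off the chart in Figure~\ref{fig:bo_1bo_1M14-bo_1bo_1bo_1M14}; translating it by $[-4]$ (shifting $s$ up by $4$) and by $\Sigma^{\abs{x}}$ with $\abs{x} \geq 8$ yields the claimed bound
$$ s > \max\Bigl\{ \tfrac{(t-s)+17}{7},\ \tfrac{(t-s)+4}{6},\ \tfrac{(t-s)-17}{5} \Bigr\}. $$

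For the inductive step $n > 4$, filter the $n$-th factor of $M_2(1)$ analogously, obtaining a convergent AHSS with
$$ E_1 = \bigoplus_x \Ext^{s,t}_{A(2)_*}(\Sigma^{\abs{x}} M_2(1)^{\otimes (n-1)}[-n] \otimes H(1,4)) \cong \bigoplus_x \Ext^{s-1,\, t-\abs{x}}_{A(2)_*}(M_2(1)^{\otimes (n-1)}[-(n-1)] \otimes H(1,4)). $$
Applying the inductive hypothesis to each summand and substituting $\abs{x} \geq 8$ gives vanishing of each $E_1$-term whenever
$$ s > \max\Bigl\{ \tfrac{(t-s)+17}{7},\ \tfrac{(t-s)+3}{6},\ \tfrac{(t-s)-19}{5} \Bigr\}. $$
This region strictly contains the claimed region for $n$, so the abutment vanishes where claimed.

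The principal obstacle is the bookkeeping in the base case: one must verify that the three slope-$1/7$, $1/6$, $1/5$ vanishing lines for $\Ext^{*,*}_{A(2)_*}(M_2(1)^{\otimes 3} \otimes H(1,4))$ visible in the chart, after the $[-4]$ and $\Sigma^{\geq 8}$ translations, produce exactly the intercepts $17$, $4$, and $-17$. Granted this, the inductive step is routine and essentially reproduces the corresponding step in the proof of Lemma~\ref{lem:vanishing1}.
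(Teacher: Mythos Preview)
Your proposal is correct and follows essentially the same approach as the paper: read off the slope-$1/7$, $1/6$, $1/5$ vanishing line for $\Ext_{A(2)_*}(M_2(1)^{\otimes 3}\otimes H(1,4))$ from Figure~\ref{fig:bo_1bo_1M14-bo_1bo_1bo_1M14}, then induct on $n$ via the Atiyah--Hirzebruch-type spectral sequence on the last $M_2(1)$ factor exactly as in the proof of Lemma~\ref{lem:vanishing1}. The paper records the input line as $s>\max\{(t-s+17)/7,(t-s+8)/6,(t-s-9)/5\}$ for $\Ext_{A(2)_*}(N_1(1)^{\otimes 3}\otimes H(1,4))$ (recall the charts are normalized so the bottom class sits in degree $0$, i.e.\ they display $N_1(1)^{\otimes k}$ rather than $M_2(1)^{\otimes k}$); after accounting for $M_2(1)^{\otimes 3}\cong\Sigma^{24}N_1(1)^{\otimes 3}$, the $[-4]$ shift, and $\abs{x}\ge 8$, one obtains exactly the intercepts $17,4,-17$ you state.
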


\begin{proof}
Examining Figure~\ref{fig:bo_1bo_1M14-bo_1bo_1bo_1M14}, we see that
$$
\Ext^{s,t}_{A(2)_*}(N_1(1)^{\otimes 3} \otimes H(1,4)) = 0
$$
for
$$ s > \max \left\{ 
\frac{(t-s)+17}{7}, \frac{(t-s)+8}{6}, \frac{(t-s)-9}{5} \right\}. $$
The lemma follows from induction on $n$, using Atiyah-Hirzebruch type
spectral sequences as in the proof of Lemma~\ref{lem:vanishing2}.
\end{proof}

\section{The modified Adams spectral sequence for
$\tmf_*M(1,4)$}\label{sec:tmfM14}

In this section we describe a complete computation of 
the MASS
$$  \Ext^{s,t}_{A(2)_*}(H(1,4))
\Rightarrow \pi_{t-s}(\tmf \wedge M(1,4)). $$

The spectral sequence is displayed in four pages in
Figures~\ref{fig:tmfASS12} and \ref{fig:tmfASS34}.  The entire spectral
sequence is $v_2^{32}$-periodic.

\begin{figure}
\includegraphics[width=0.462\textwidth]{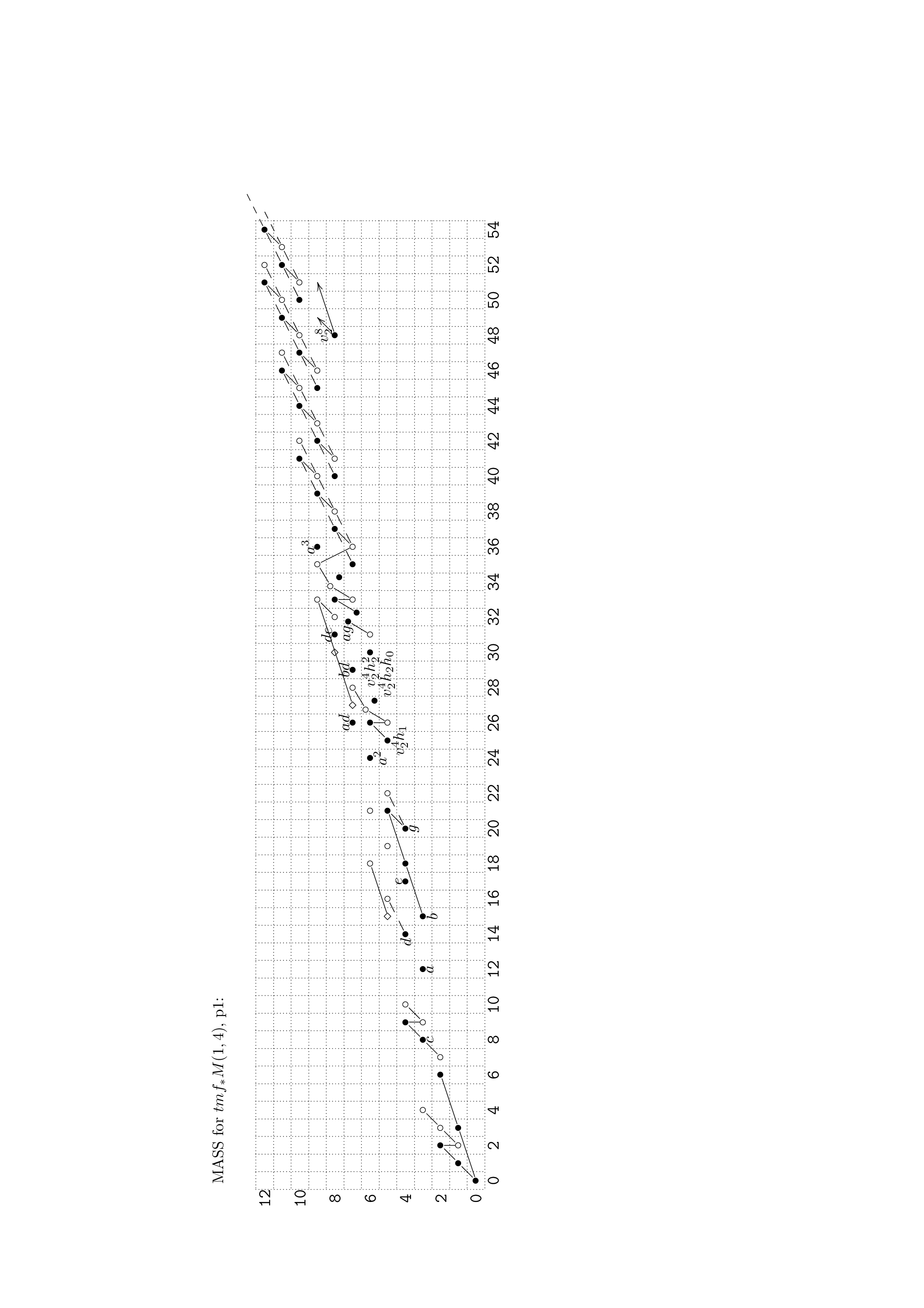} 
\hfill
\includegraphics[width=0.498\textwidth]{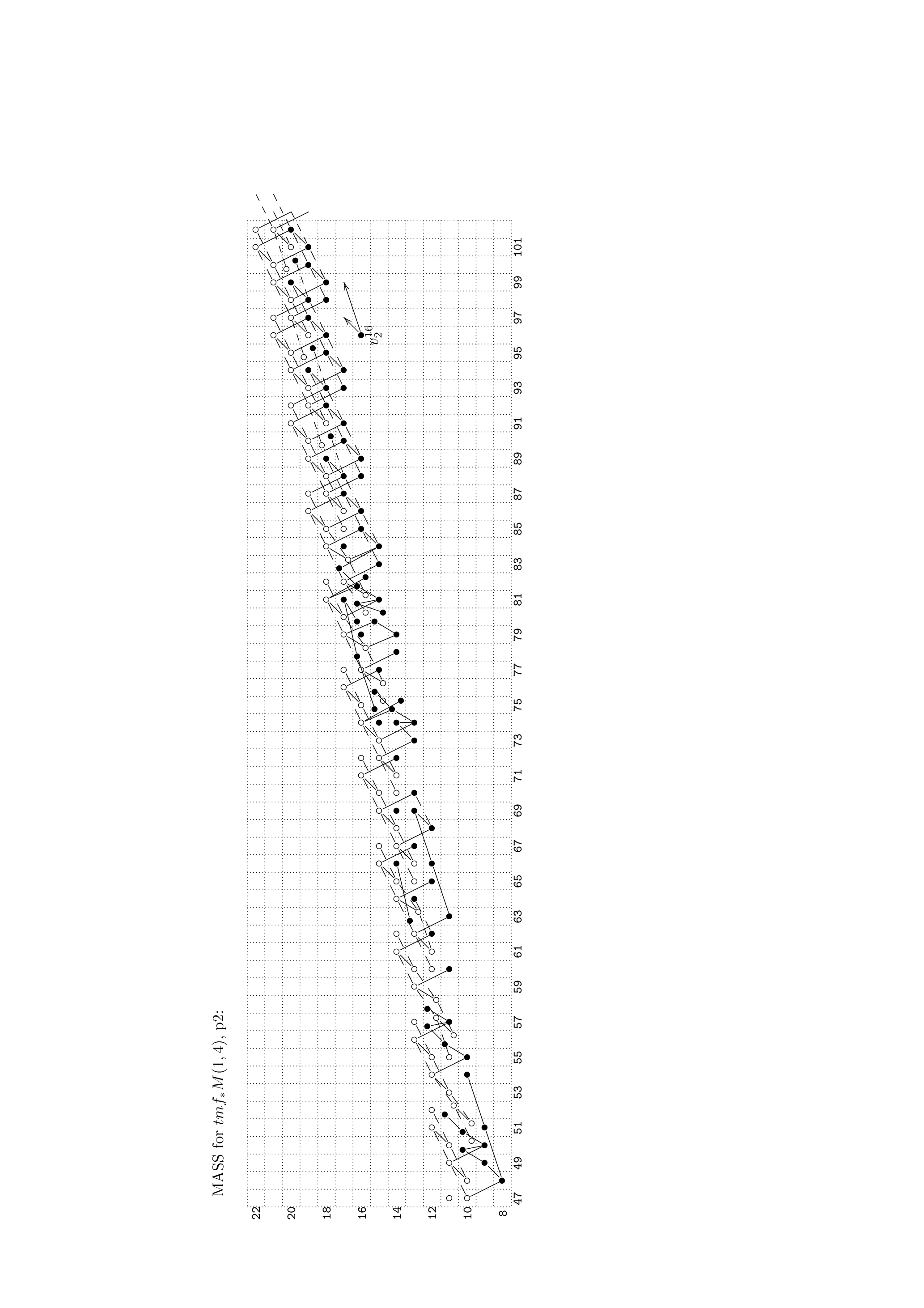}
\caption{}\label{fig:tmfASS12}
\end{figure}

\begin{figure}
\includegraphics[width=0.456\textwidth]{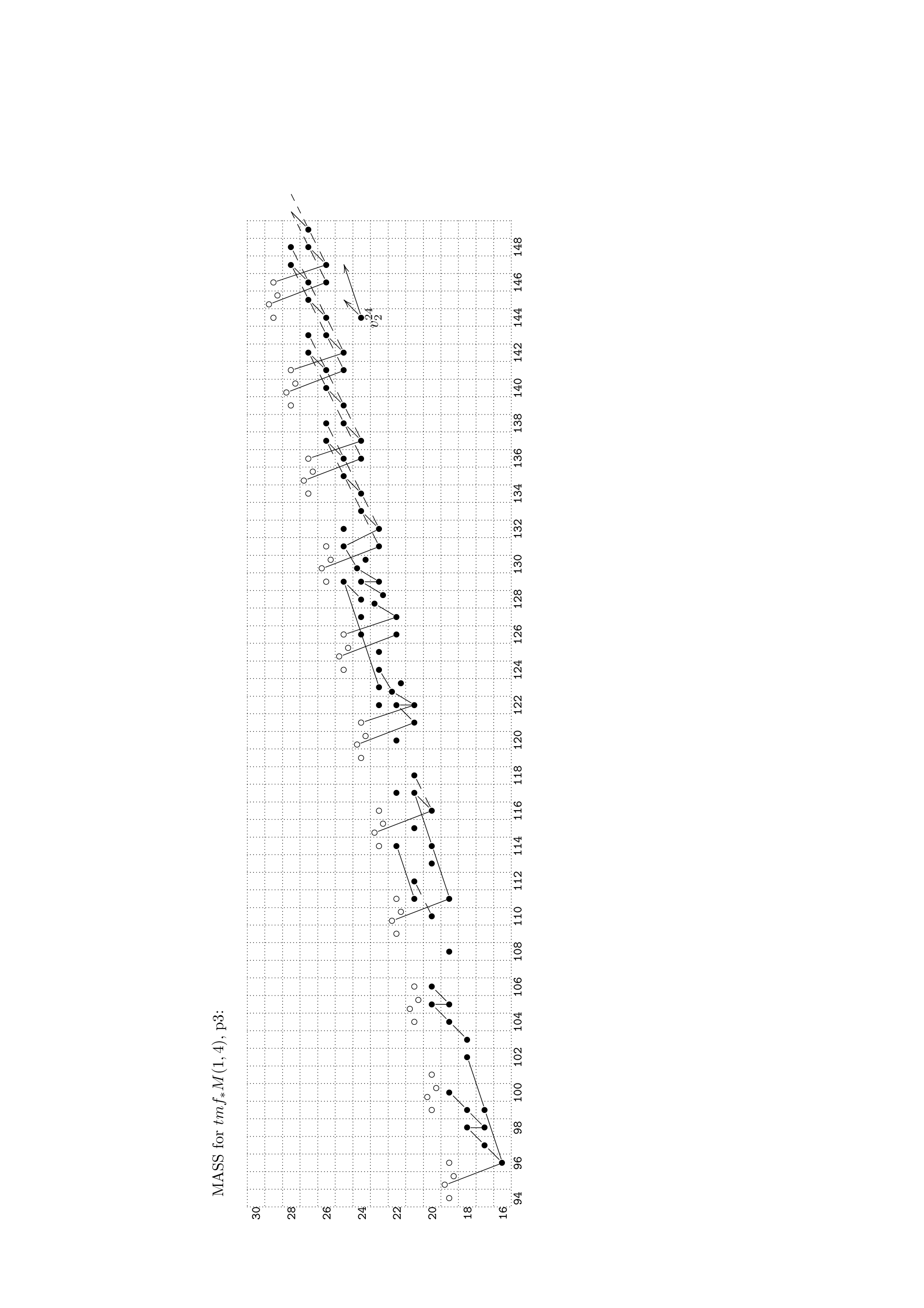}
\hfill
\includegraphics[width=0.504\textwidth]{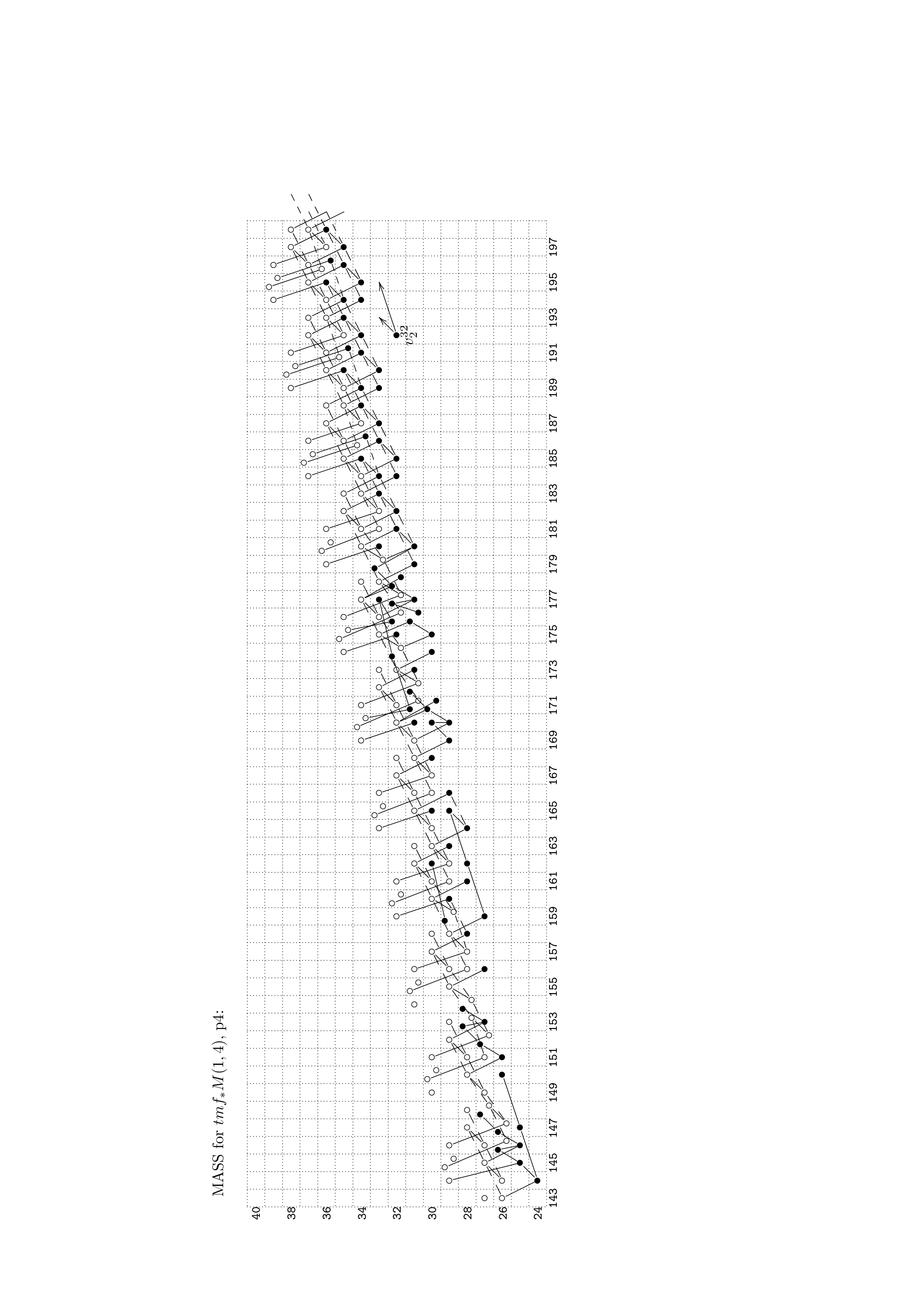}
\caption{}\label{fig:tmfASS34}
\end{figure}

We first explain what is happening in these charts.  Then we explain the
methodology used to produce these differentials.

\subsection*{Page 1: dimensions 0--48}

The truncated wedge beginning in $t-s = 35$ is infinite, and propagated by
$g$-multiplication.  The entire chart is periodic under
$v_2^8$-multiplication.  
Classes born on the $0$-cell of $M(1)$ are denoted
with a $\bullet$, and classes born on the $1$-cell of $M(1)$ are denoted
with a $\circ$.  Although multiplication by $c_4 = v_1^4$ is faithful in 
$\Ext_{A(2)_*}(\FF_2)$, it is not faithful in $\Ext_{A(2)_*}(M(1))$.  We
therefore get some classes coming from the $9$-cell of $M(1,4)$, which we
denote with a $\diamond$.

There are only two possible Adams differentials through $t-s = 47$, and
only one of them actually occurs.  This differential is indicated on the
chart.

\subsection*{Page 2: dimensions 48--96}

We move to the region between the occurrence of $v_2^8$
and $v_2^{16}$.  There are numerous $d_2$ differentials in this range, 
displayed in the
chart.  In this chart, the classes propagated by $v_2^8$ are denoted with
a $\bullet$, and the classes coming from the truncated wedge starting in
$t-s = 35$ are denoted with a $\circ$.
Note that beginning in $t-s = 91$, we just have the following pattern.
\sseqxstart=89
\sseqystart=18
\begin{sseq}{13}{3}

\ssmoveto{89}{18}
\ssdrop{\bullet}
\ssmove 1 0
\ssdrop{\circ}
\ssdrop{\bullet}
\ssmove 1 0
\ssdrop{\circ}
\ssmove 3 1

\ssdrop{\bullet}
\ssmove 1 0
\ssdrop{\circ}
\ssdrop{\bullet}
\ssmove 1 0
\ssdrop{\circ}
\ssmove 3 1

\ssdrop{\bullet}
\ssmove 1 0
\ssdrop{\circ}
\ssdrop{\bullet}
\ssmove 1 0
\ssdrop{\circ}
\end{sseq}

\subsection*{Page 3: dimensions 96--144}

We now move up to the region between $v_2^{16}$ and $v_2^{24}$.  We
propagate only the $h_{2,1}$-periodic pattern from the previous page 
(denoted with $\circ$); everything else is either the source or
target of a $d_2$.  We denote the elements propagated by $v_2^{16}$
multiplication with a $\bullet$.

\subsection*{Page 4: dimensions 144--192}

We now introduce the differentials supported by $v_2^{24}$ and its
multiples.  We see that eventually we get a small gap in homotopy between
the $180$ stem and the $192$ stem.  Then the pattern repeats with
$v_2^{32}$-periodicity.

\subsection*{Methodology}

In \cite{HopkinsMahowald}, the structure of the Adams spectral sequence
$$ \Ext^{*,*}_{A(2)_*}(\FF_2) \Rightarrow \pi_*\tmf_2 $$
is completely determined.  The Adams spectral sequence for $\tmf_*M(1,4)$
is a module over the Adams spectral sequence for $\tmf_*$, and all of the
differentials for $\tmf_*M(1,4)$ were deduced from this structure.  These
computations were double-checked against the Atiyah-Hirzebruch spectral
sequence
$$ H^*(M(1,4), \tmf_*) \Rightarrow \tmf_*M(1,4) $$
using the known values of $\tmf_*$.  As a further consistency check, a
combination of Gross-Hopkins duality 
\cite{GrossHopkins}
and Mahowald-Rezk 
\cite{MahowaldRezk}
duality shows that
$\tmf_*M(1,4)$ is, up to a shift, Pontryagin self-dual, and this is
consistent with our computations.

\section{$d_2(v_2^8)$ and $d_3(v_2^{16})$}\label{sec:d2d3}

In this section we will lift the differentials $d_2(v_2^{8})$ and
$d_3(v_2^{16})$ from the MASS for
$\tmf_*M(1,4)$ to the MASS for $\pi_*(M(1,4) \wedge DM(1,4))$.  We will observe
that both $d_2(v_2^8)$ and $d_3(v_2^{16})$ are central, and hence, using
the fact that the MASS for $\pi_*(M(1,4)\wedge DM(1,4))$ is a spectral
sequence of algebras, we will deduce that $d_r(v_2^{32})$ is zero for $r <
4$.

\begin{lem}\label{lem:d2v28}
In the MASS for $\pi_*(M(1,4) \wedge DM(1,4))$, there is a
differential 
$$ d_2 (v_2^8) = \td{e_0 r}, $$
where $\td{e_0 r}$ is the image of the element $e_0 r$ under the map
$$ \Ext^{10,57}_{A_*}(\FF_2) \rightarrow \Ext^{10,57}_{A_*}(H(1,4) \otimes
DH(1,4)). $$
\end{lem}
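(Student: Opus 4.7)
The strategy is to lift the differential from the MASS for $\tmf \wedge M(1,4)$, which is known completely by Section~\ref{sec:tmfM14}, via naturality. Consider the composite map of spectra
$$
M(1,4) \wedge DM(1,4) \xrightarrow{1 \wedge \epsilon} M(1,4) \xrightarrow{\eta \wedge 1} \tmf \wedge M(1,4),
$$
in which $\epsilon\colon DM(1,4) \to S^0$ dualizes the bottom-cell inclusion and $\eta\colon S \to \tmf$ is the unit. By Proposition~\ref{prop:MASS} together with naturality, this yields a map of modified Adams spectral sequences under which the class $v_2^8 \in \Ext^{8,56}_{A_*}(H(1,4) \otimes DH(1,4))$ is carried to the standard $v_2^8 \in \Ext^{8,56}_{A(2)_*}(H(1,4))$, essentially by the construction in Proposition~\ref{prop:v_2^8}.

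First, I would read off $d_2(v_2^8)$ in the MASS for $\tmf \wedge M(1,4)$ from Page~1 of Figure~\ref{fig:tmfASS12}, and identify its target as the image of $e_0 r \in \Ext^{10,57}_{A_*}(\FF_2)$ under the composite
$$
\Ext^{*,*}_{A_*}(\FF_2) \to \Ext^{*,*}_{A_*}(H(1,4)) \to \Ext^{*,*}_{A(2)_*}(H(1,4)).
$$
Since $e_0$ and $r$ are standard Adams generators whose product has the correct bidegree $(10,57)$ and whose image in $A(2)_*$-Ext is nontrivial, this identification amounts to matching a single class in a specific chart location and can be checked against the known image of the $\tmf$ Hurewicz map of \cite{HopkinsMahowald}.

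Next, by naturality, $d_2(v_2^8) \in \Ext^{10,57}_{A_*}(H(1,4)\otimes DH(1,4))$ must project to this target in the $\tmf$-MASS. Since $\widetilde{e_0 r}$ does so by construction, it is a candidate; it remains to rule out indeterminacy, i.e.\ to show that no nonzero class in $\Ext^{10,57}_{A_*}(H(1,4)\otimes DH(1,4))$ lies in the kernel of projection to $\Ext^{10,57}_{A(2)_*}(H(1,4))$ and could be added to $\widetilde{e_0 r}$ without being detected. For this I would combine Lemma~\ref{lem:algDavisMahowald} (reducing to $\Ext_{A_*}(H(1,4))$) with the algebraic $\tmf$-resolution of Section~\ref{sec:BrownGitler} and the vanishing lines of Lemmas~\ref{lem:vanishing1}--\ref{lem:vanishing2}, truncating to the finite range handled by the charts in Section~\ref{sec:Charts}.

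The main obstacle is the first step: producing an airtight identification of the $d_2$-target in the $\tmf$-MASS and matching it with the image of $e_0 r$. All subsequent work (naturality, candidate lifting, vanishing of indeterminacy) is essentially chart-level bookkeeping, whereas the identification of the target demands explicit compatibility between the classical Adams $E_2$-term for the sphere and the $A(2)_*$-Ext of $H(1,4)$ in the specific bidegree $(t-s, s) = (47, 10)$.
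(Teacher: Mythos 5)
Your overall strategy --- pushing $v_2^8$ into a $\tmf$-based spectral sequence, quoting the known differential there, and lifting back by naturality --- is the same as the paper's, but you have inverted where the difficulty lies, and the step you dismiss as ``chart-level bookkeeping'' is exactly where your argument has a genuine gap. Identifying $d_2(v_2^8)$ and its target in the MASS for $\tmf \wedge M(1,4)$ is essentially quoted from \cite{HopkinsMahowald} and Section~\ref{sec:tmfM14}; the real content is the lift. Naturality only gives that $d_2(v_2^8) - \td{e_0 r}$ lies in the kernel of
$$ \Ext^{10,57}_{A_*}(H(1,4) \otimes DH(1,4)) \longrightarrow \Ext^{10,57}_{A(2)_*}(H(1,4)), $$
so your plan requires this kernel to vanish in that bidegree, and the tools you invoke cannot establish that. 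The vanishing lines of Lemmas~\ref{lem:vanishing1} and \ref{lem:vanishing2} at $(t-s,s)=(47,10)$ only eliminate the algebraic $\tmf$-resolution terms with some $j_i \ge 2$ or with $n>3$; the remaining terms $\Ext_{A(2)_*}(M_2(1)^{\otimes k}[-k]\otimes H(1,4)\otimes DH(1,4))$ for $k=1,2,3$ sit in positive filtration of the resolution and therefore map to zero under the edge map to $\Ext_{A(2)_*}$ --- they are precisely the potential kernel classes you must exclude, and neither the vanishing lines nor the Section~\ref{sec:Charts} charts as you cite them (which, moreover, do not include the $DH(1,4)$ factor) address them. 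Worse, your comparison forgets the entire dual factor: even at the $A(2)$-level the map $\Ext^{10,57}_{A(2)_*}(H(1,4)\otimes DH(1,4)) \to \Ext^{10,57}_{A(2)_*}(H(1,4))$ can have kernel coming from the negative-dimensional cells (note $e_0 r[1] \in \Ext^{10,58}_{A(2)_*}(H(1,4))$ contributes via the $(-1)$-cell), so the injectivity you need is doubtful as stated. Also, Lemma~\ref{lem:algDavisMahowald} reduces $H(1,4)\otimes DH(1,4)$ to $H(1,4)\otimes DH(1)$, not to $\Ext_{A_*}(H(1,4))$ as you write.

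The paper's proof handles this differently and more sharply: it first uses Proposition~\ref{prop:DavisMahowald} to reduce to the MASS for $M(1,4)\wedge DM(1)$, and then compares with the MASS for $\tmf\wedge M(1,4)\wedge DM(1)$, keeping the $DH(1)$ factor in the target $\Ext_{A(2)_*}(H(1,4)\otimes DH(1))$. It then shows by direct computation --- the values of $\Ext_{A(2)_*}(H(1,4))$ at $(t-s,s)=(47,10)$ and $(48,10)$ together with the vanishing of $\Ext_{A_*}(\FF_2)$ at $(46,10)$, $(48,10)$, $(55,5)$, $(56,5)$, $(57,5)$ from Bruner's tables \cite{Bruner} --- that the comparison map $\Ext^{10,57}_{A_*}(H(1,4)\otimes DH(1)) \to \Ext^{10,57}_{A(2)_*}(H(1,4)\otimes DH(1))$ is an isomorphism, which pins down $d_2(v_2^8)=\td{e_0 r}$ exactly, with no indeterminacy to dispose of. Some explicit low-dimensional $\Ext_{A_*}$ input of this kind is unavoidable; your sketch does not supply it, and without it (or without retaining the dual factor in the comparison as the paper does) the ambiguity in the lifted differential is not controlled.
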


\begin{proof}
By Proposition~\ref{prop:DavisMahowald} it suffices to establish that
$d_2(v_2^8) = \td{e_0 r}$ in the MASS
$$ \Ext^{s,t}_{A_*}(H(1,4) \otimes DH(1)) \Rightarrow \pi_{t-s} (M(1,4)
\wedge DM(1)). $$
The differential $d_2(v_2^8)$ in the Adams spectral sequence for $\tmf$ maps to a
differential $d_2(v_2^8) = \tdd{e_0 r}$ 
under the map of (M)ASSs
$$
\xymatrix{
\Ext^{s,t}_{A(2)_*}(\FF_2) \ar@{=>}[r] \ar[d] &
\pi_{t-s}\tmf \ar[d] \\
\Ext^{s,t}_{A(2)_*}(H(1,4) \otimes DH(1)) \ar@{=>}[r] &
\tmf_{t-s}(M(1,4) \wedge DM(1))
}
$$
where $\tdd{e_0 r}$ is the image of $e_0 r$ under the composite
$$ \Ext^{10, 57}_{A_*}(\FF_2) \rightarrow \Ext^{10,57}_{A_*}(H(1,4)\otimes
DH(1)) \rightarrow \Ext^{10,57}_{A(2)_*}(H(1,4)\otimes DH(1)). $$
We wish to lift the differential $d_2(v_2^8) = \tdd{e_0 r}$ to $d_2(v_2^8)
= \td{e_0 r}$
using the map of MASSs:
$$
\xymatrix{
\Ext^{s,t}_{A_*}(H(1,4) \otimes DH(1)) \ar@{=>}[r] \ar[d] &
\pi_{t-s}(M(1,4) \wedge DM(1)) \ar[d] \\
\Ext^{s,t}_{A(2)_*}(H(1,4) \otimes DH(1)) \ar@{=>}[r] &
\tmf_{t-s}(M(1,4) \wedge DM(1))
}
$$
However, using 
$$ \Ext^{s,t}_{A(2)_*}(H(1,4)) = 
\begin{cases}
e_0 r[0] & (t-s,s) = (47,10), \\
e_0 r[1] & (t-s,s) = (48,10)
\end{cases}
$$
and
$$ \Ext^{s,t}_{A_*}(\FF_2) = 
\begin{cases}
e_0 r & (t-s,s) = (47,10), \\
0 & (t-s,s) = (46,10), (48,10), (55,5), (56,5), (57,5)
\end{cases}
$$
we may deduce that the map
$$ \Ext^{10,57}_{A_*}(H(1,4)\otimes DH(1)) \rightarrow
\Ext^{10,57}_{A(2)_*}(H(1,4)\otimes DH(1))
$$
is an isomorphism.  This suffices to show that the differential
$d_2(v_2^8)$ lifts as
desired.
\end{proof}

Since $d_2(v_2^8)$ is central, Proposition~\ref{prop:MASSmult} gives 
the following corollary.

\begin{cor}\label{cor:d2v216}
In the MASS for $\pi_*(M(1,4) \wedge DM(1,4))$, we have $d_2 (v_2^{16}) = 0$.
\end{cor}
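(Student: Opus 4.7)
The plan is to apply the Leibniz rule in the MASS for $\pi_*(M(1,4)\wedge DM(1,4))$, which is a spectral sequence of algebras by Proposition~\ref{prop:MASSmult}. Writing $v_2^{16}=(v_2^8)^2$ and differentiating, one obtains
$$ d_2(v_2^{16}) \;=\; d_2(v_2^8)\cdot v_2^8 + v_2^8 \cdot d_2(v_2^8) \;=\; \td{e_0 r}\cdot v_2^8 + v_2^8 \cdot \td{e_0 r}, $$
where I have used Lemma~\ref{lem:d2v28} to identify $d_2(v_2^8) = \td{e_0 r}$. Invoking the centrality of $\td{e_0 r}$ asserted in the paragraph preceding the corollary, the two summands coincide and therefore cancel in characteristic $2$. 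This yields $d_2(v_2^{16})=0$.

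The only substantive ingredient is the centrality of $\td{e_0 r}$. I would justify this as follows: by Lemma~\ref{lem:d2v28}, $\td{e_0 r}$ is the image of a class from $\Ext^{*,*}_{A_*}(\FF_2)$ under the map induced by the unit $\FF_2 \to H(1,4)\otimes DH(1,4)$. The multiplicative pairing of MASSs constructed in Proposition~\ref{prop:MASSmult} is derived from the canonical Adams resolution $\{\br{H}^{\wedge i}\}$ of the sphere spectrum, so the action of $\Ext^{*,*}_{A_*}(\FF_2)$ on $\Ext^{*,*}_{A_*}(H(1,4)\otimes DH(1,4))$ coming from this pairing factors through the (graded-commutative) Ext algebra of the sphere and commutes with every class in the spectral sequence. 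In particular $\td{e_0 r}$ commutes with $v_2^8$, which is all that is needed above.

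The main obstacle — really the only non-formal step — is verifying this centrality claim; once that is in hand, the argument is a one-line application of Leibniz plus $2 = 0$. No further obstructions arise, and in particular no $\Ext$ computations or vanishing-line arguments are required for this corollary.
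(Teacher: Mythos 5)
Your proof is correct and follows essentially the same route as the paper: the paper also deduces the corollary by applying the Leibniz rule for the multiplicative MASS of Proposition~\ref{prop:MASSmult} to $v_2^{16}=(v_2^8)^2$, using that $d_2(v_2^8)=\td{e_0 r}$ is central (being in the image of $\Ext_{A_*}(\FF_2)$ via the unit), so the two Leibniz terms cancel in characteristic $2$. Your added justification of centrality via the pairing built from the canonical Adams resolution of the sphere is exactly the reasoning the paper leaves implicit.
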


We now investigate $d_3(v_2^{16})$.

\begin{lem}
In the MASS for $\pi_*(M(1,4)\wedge DM(1,4))$, the element $d_3(v_2^{16})$
is in the image of
$$ \Ext^{19,114}_{A_*}(\FF_2) \rightarrow \Ext^{19,114}_{A_*}(H(1,4)\otimes
DH(1,4)). $$
In particular, $d_3(v_2^{16})$ is central.
\end{lem}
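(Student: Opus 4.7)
The plan is to mirror the approach used in Lemma~\ref{lem:d2v28}. By Corollary~\ref{cor:d2v216}, the class $v_2^{16}$ survives to the $E_3$-page of the MASS for $M(1,4) \wedge DM(1,4)$, so $d_3(v_2^{16})$ is well-defined. By Proposition~\ref{prop:DavisMahowald}, it suffices to identify $d_3(v_2^{16})$ after projection to the MASS for $\pi_*(M(1,4) \wedge DM(1))$ and to exhibit a preimage in $\Ext^{19,114}_{A_*}(\FF_2)$ that hits it.

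First, using the complete computation of the MASS for $\tmf \wedge M(1,4)$ in Section~\ref{sec:tmfM14} (see Figures~\ref{fig:tmfASS12} and \ref{fig:tmfASS34}), I will identify the specific nonzero class $\tdd{d_3(v_2^{16})} \in \Ext^{19,114}_{A(2)_*}(H(1,4))$ exhibited as $d_3(v_2^{16})$ in the tmf MASS. Since the periodicity generator in $\pi_{192}(\tmf_2)$ maps to $v_2^{32}$, the image of $v_2^{16}$ in the tmf MASS pins down this target unambiguously.

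Next, I would study the diagram of comparison maps
\begin{equation*}
\Ext^{19,114}_{A_*}(\FF_2) \to \Ext^{19,114}_{A_*}(H(1,4) \otimes DH(1,4)) \to \Ext^{19,114}_{A_*}(H(1,4) \otimes DH(1)) \to \Ext^{19,114}_{A(2)_*}(H(1,4))
\end{equation*}
together with the analogous diagrams in a window of neighboring bidegrees. As in Lemma~\ref{lem:d2v28}, I would use the long exact sequences induced by the triangle (\ref{eq:H(1)cofiber}) and the defining cofiber sequence for $H(1,4)$, together with Bruner's computer calculations of $\Ext^{s,t}_{A_*}(\FF_2)$ and $\Ext^{s,t}_{A_*}(H(1))$ near $(s,t) = (19,114)$, to argue that $\tdd{d_3(v_2^{16})}$ admits a lift to $\Ext^{19,114}_{A_*}(H(1,4)\otimes DH(1,4))$ which already lies in the image of $\Ext^{19,114}_{A_*}(\FF_2)$. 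Once this is done, centrality is automatic, because the map $\FF_2 \to H(1,4) \otimes DH(1,4)$ is the algebra unit of the MASS for $M(1,4) \wedge DM(1,4)$, so its image in $\Ext$ lies in the center of the $E_2$-algebra provided by Proposition~\ref{prop:MASSmult}, and this centrality persists to all later pages.

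The main obstacle will be the middle step. At filtration $19$ in stem $95$ there are considerably more potentially relevant classes than in the bidegree $(10,47)$ used for $d_2(v_2^8)$, so one cannot hope for an outright isomorphism between $\Ext^{19,114}_{A_*}(H(1,4)\otimes DH(1))$ and $\Ext^{19,114}_{A(2)_*}(H(1,4))$. Instead, the strategy will be to enumerate the possible lifts via the long exact sequences above, and then to show that each of them is hit by $\Ext^{19,114}_{A_*}(\FF_2)$. This enumeration will depend on Bruner's programs applied to a small window of bidegrees, combined with the vanishing estimates of Lemmas~\ref{lem:vanishing1} and \ref{lem:vanishing2} to control contributions from higher Brown--Gitler stages in the algebraic $\tmf$-resolution.
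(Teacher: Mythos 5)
Your reduction to $M(1,4)\wedge DM(1)$ via Proposition~\ref{prop:DavisMahowald}, the use of the $\tmf$-MASS to pin down the target, and the observation that centrality follows from landing in the image of the unit are all in line with the paper. The gap is in your ``main obstacle'' step, and it is not a bookkeeping issue that a window of Bruner computations can close. From the comparison with the $\tmf$-MASS you only learn that $y = d_3(v_2^{16})$ maps to the class $z$ coming from $\Ext^{19,114}_{A(2)_*}(\FF_2)$; hence $y = i_*(x) + w$ where $w$ lies in the kernel of $\Ext^{19,114}_{A_*}(H(1,4)\otimes DH(1)) \to \Ext^{19,114}_{A(2)_*}(H(1,4)\otimes DH(1))$, i.e.\ $w$ is detected in positive filtration of the algebraic $\tmf$-resolution. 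The vanishing lines of Lemmas~\ref{lem:vanishing1} and \ref{lem:vanishing2} do \emph{not} kill this contribution: there remains a nonzero group $\Ext^{19,114}_{A(2)_*}(M_2(1)\otimes H(1,4)\otimes DH(1)[-1])$ that can detect $w$, and classes detected there have no reason to be (and are not shown by any Ext enumeration to be) in the image of $\Ext^{19,114}_{A_*}(\FF_2)$. So ``enumerate the lifts and check each is hit by the sphere'' cannot be carried out with the tools you list; one must instead prove $w=0$.

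That vanishing is exactly where the paper uses an input of a different nature, absent from your plan: one projects to the top cell, noting that a nonzero $w$ would have nontrivial image $(p_*\otimes 1)(w)$ and would force a differential $d_3\bigl((p_*\otimes 1)(v_2^{16})\bigr) = (p_*\otimes 1)(w)$ in the MASS for $M(1)\wedge DM(1)$. Nakamura's formula applied to the May differential $d_8(b_{3,0}^4)=h_5b_{2,0}^4$ gives $d_{16}(b_{3,0}^8)=h_6b_{2,0}^8$, whence $(p_*\otimes 1)(v_2^{16}) = h_6 b_{2,0}^6$; and $h_6b_{2,0}^6$ is a \emph{permanent} cycle because, via the Adams differential $d_2(h_6)=h_0h_5^2$ and the fact that $v_1^{12}\in\pi_{24}(M(1)\wedge DM(1))$ has order $2$, it detects a Toda bracket built from $\theta_5$. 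Together with a check (the paper's Claim~\ref{eq:claim}, using Bruner's charts and $h_2$-multiplications) that $(p_*\otimes 1)(w)$ survives to $E_3$, this yields a contradiction, so $w=0$. Your proposal never leaves the algebraic world and therefore has no mechanism to exclude the higher-filtration error term; as written it would stall precisely at the step you flag as the main obstacle.
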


\begin{proof}
By Proposition~\ref{prop:DavisMahowald} it suffices to establish that
in the MASS for $\pi_*(M(1,4)\wedge DM(1))$, 
the element $y = d_3(v_2^{16})$  
is in the image of the map
$$ \Ext^{19,114}_{A_*}(\FF_2) \rightarrow \Ext^{19,114}_{A_*}(H(1,4)\otimes
DH(1)). $$
The differential $d_3(v_2^{16})$ in the ASS for $\tmf$ maps to a
differential $d_3(v_2^{16}) = z$ under the map of (M)ASSs
$$
\xymatrix{
\Ext^{s,t}_{A(2)_*}(\FF_2) \ar@{=>}[r] \ar[d] &
\pi_{t-s}\tmf \ar[d] \\
\Ext^{s,t}_{A(2)_*}(H(1,4) \otimes DH(1)) \ar@{=>}[r] &
\tmf_{t-s}(M(1,4) \wedge DM(1))
}
$$
where $z$ is in the image of
$$ \Ext^{19,114}_{A(2)_*}(\FF_2) \rightarrow
\Ext^{19,114}_{A(2)_*}(H(1,4)\otimes DH(1)). $$
Using the map of spectral sequences
$$
\xymatrix{
\Ext^{s,t}_{A_*}(H(1,4) \otimes DH(1)) \ar@{=>}[r] \ar[d] &
\pi_{t-s}(M(1,4) \wedge DM(1)) \ar[d] \\
\Ext^{s,t}_{A(2)_*}(H(1,4) \otimes DH(1)) \ar@{=>}[r] &
\tmf_{t-s}(M(1,4) \wedge DM(1))
}
$$
we see that $y$ maps to $z$.  Therefore $z$ detects $y$ in the algebraic
$\tmf$-resolution for $\Ext^{*,*}_{A_*}(H(1,4)\otimes DH(1))$.  Since the
algebraic $\tmf$-resolution is functorial, we deduce that $y$ is in the image
of the map 
$$ \Ext^{19,114}_{A_*}(\FF_2) \xrightarrow{i_*} 
\Ext^{19,114}_{A_*}(H(1,4)\otimes
DH(1)) $$
modulo higher terms of the algebraic $\tmf$-resolution: that is to say,
there exists an element
$$ x \in \Ext^{19,114}_{A_*}(\FF_2) $$
such that $y - i_*(x)$ is detected in a higher filtration of the algebraic
$\tmf$-resolution.

We are left with showing that $w = y - i_*(x) = 0$.  Suppose not.  Using our
vanishing lines from Section~\ref{sec:reduction} and our $\Ext_{A(2)_*}$
computations from Section~\ref{sec:Charts}, we deduce that $w$ 
is detected in the algebraic $\tmf$-resolution by an element
$$ \br{w} \in \Ext^{19,114}_{A(2)_*}(M_2(1) \otimes H(1,4) \otimes DH(1)[-1]) $$
and the image of $\br{w}$ under the 
map
$$ \Ext^{19,114}_{A(2)_*}(M_2(1) \otimes H(1,4) \otimes DH(1)[-1])
\xrightarrow{1 \otimes p_* \otimes 1}
\Ext^{19,114}_{A(2)_*}(M_2(1) \otimes \Sigma^{12}H(1) \otimes DH(1)[-4]) $$
is non-trivial, where $p_*$ is the projection
$$ H(1,4) \rightarrow \Sigma^{12} H(1)[-3] $$
in the derived category of $A_*$-comodules induced by the projection
$$ p: M(1,4) \rightarrow \Sigma^9 M(1). $$
We deduce that in the MASS for $M(1) \wedge DM(1)$ there is a
differential
$$ d_3((p_* \otimes 1)(v_2^{16})) = (p_* \otimes 1) (w). $$
We will verify the following claim:

\begin{claim}\label{eq:claim}
The element $(p_* \otimes 1)(w)$ is non-trivial in the
$E_3$-page of the MASS for $M(1) \wedge DM(1)$.
\end{claim}

Assuming Claim~\ref{eq:claim}, 
we deduce that $d_3((p_* \otimes 1)(v_2^{16}))$ is
non-trivial.  However, the image of $v_2^{16}$ under the map
$$ \Ext^{16, 112}_{A_*}(H(1,4)\otimes DH(1)) \xrightarrow{p_* \otimes 1}  
\Ext^{16, 112}_{A_*}(\Sigma^{12} H(1)\otimes DH(1)[-3]) $$
may be computed using the May spectral sequence.  
In the May spectral sequence, the element $v_2^{16}$ is detected by
$b_{3,0}^8$.
Applying Nakamura's formula
\cite{Nakamura} to the May spectral sequence 
differential $d_8(b_{3,0}^4) = h_5 b_{2,0}^4$ in the proof of
Proposition~\ref{prop:v_2^8} gives
$$ d_{16}(b_{3,0}^8) = h_6 b_{2,0}^8 $$
from which it follows that 
$$ (p_* \otimes 1)(v_2^{16}) = h_6 b_{2,0}^6. $$
The element $b_{2,0}^6$ detects the cube of the Adams map:
$$ v_1^{12} = (v_1^4)^3 \in \pi_{24}(M(1) \wedge DM(1)). $$
Since this homotopy element has order $2$, the Adams differential
$$ d_2(h_6) = h_0 h_5^2 $$
implies that the element $h_6 b_{2,0}^6$
detects the Toda bracket of the composite
$$ 
S^{86} \xrightarrow{\theta_5} 
S^{24} \xrightarrow{2} S^{24} \xrightarrow{v_1^{12}} M(1) \wedge DM(1).
$$
In particular, $h_6 b_{2,0}^6$ is a permanent cycle in the MASS for $M(1)
\wedge DM(1)$, which contradicts the existence of a non-trivial
differential $d_3((p_* \otimes 1)(v_2^{16}))$.  Thus the
assumption that $w \ne 0$ gives rise to a contradiction, and we conclude
that $w = 0$, as desired.

We are left with verifying Claim~\ref{eq:claim}.  We will verify this claim
by establishing:
\begin{enumerate}
\item The element 
$$ (1 \otimes p_* \otimes 1)(\br{w}) \in \Ext_{A(2)_*}^{19,114}(M_2(1)\otimes \Sigma^{12} H(1)
\otimes DH(1)[-4]) $$ 
is not the target of a differential in the algebraic $\tmf$-resolution for
$\Ext_{A_*}^{*,*}(H(1) \otimes DH(1))$.
\item The element 
$$ (p_* \otimes 1)(w) \in \Ext_{A_*}^{19,114}(\Sigma^{12} H(1) \otimes DH(1)[-3]) $$ 
is not the target of a $d_2$
differential in the MASS for $M(1) \wedge DM(1)$.
\end{enumerate}

Item (1) above is verified by observing that
$$ \Ext^{s,t}_{A(2)_*}(\FF_2) = 0 \qquad (t-s,s) = (86, 15), (87,15),
(88,15) $$
and so there are no possible contributions to 
$$ \Ext^{87,15}_{A(2)_*}(H(1)\otimes DH(1)), $$
and this is the only possible source for a differential in the algebraic
$\tmf$-resolution.

We now verify (2).  The $A_*$-comodule $H(1) \otimes DH(1)$ has the following
diagram of generators.
\begin{equation}\label{eq:H1DH1}
\xymatrix@C-2em@R-2em{
1 & \circ \ar@{-}[d] \ar@{-} `[r] `[rrdd] [rrdd] & \\
0 & \bullet  && \triangle \ar@{-}[d] \\
-1 & && \square
}
\end{equation}
Here the straight lines encode the action of $Sq^1_*$ 
and the curved line denotes a
$Sq^2_*$.  Using Bruner's computer generated $\Ext_{A_*}(\FF_2)$ charts
\cite{Bruner}, 
we compute the
vicinity of $(p_* \otimes 1)(w)$ in $\Ext^{*,*}_{A_*}(H(1)\otimes DH(1))$
in Table~\ref{tab:p*w}.

\begin{table}[!h]
\begin{tabular}{c|c|c}
$s \backslash t-s$  & 86 & 87
\\
\hline
16 
& 
$\begin{array}{c}
\circ \circ \\
\triangle \\
(p_* \otimes 1)(w) \ssb \ssb \phantom{(p_* \otimes 1)(w)}
\end{array}$ 
&
$\ast$
\\
\hline
15
&
$\ast$
&
$\ast$
\\
\hline
14
&
$\ast$
&
$\begin{array}{c}
\phantom{b_{87}} \circ b_{87} \\
\phantom{a_{87}} \square a_{87}
\end{array}$
\end{tabular}
\caption{$\Ext^{s,t}_{A_*}(H(1)\otimes
DH(1))$ near $(p_* \otimes 1)(w)$}\label{tab:p*w}
\end{table}

In this table, entries marked with $\ast$ are not computed, otherwise,
elements are denoted by the generator (as in (\ref{eq:H1DH1})) that
supports it.  The only possible sources for a non-trivial $d_2$ are
$a_{87}$ and $b_{87}$.

The element $a_{87}$ is the image of an element
$$ a_{88} \in \Ext^{14, 102}_{A_*}(\FF_2) $$
under the inclusion of the bottom generator
$$ \Sigma^{-1} \FF_2 \rightarrow H(1) \otimes DH(1). $$
Since $\Ext^{16, 103}_{A_*}(\FF_2) = 0$, we deduce that $d_2(a_{88}) = 0$
in the ASS for $\pi_* S$.  The map of MASSs induced from the inclusion of
the bottom cell of $M(1) \wedge DM(1)$ gives $d_2(a_{87}) = 0$.

We now turn our attention to $b_{87}$.  Table~\ref{tab:p*w2} shows the
portion of
$\Ext_{A_*}(H(1)\otimes DH(1))$ mapped to the vicinity of
Table~\ref{tab:p*w} under $h_2$-multiplication.

\begin{table}[!h]
\begin{tabular}{c|c|c}
$s \backslash t-s$  & 83 & 84
\\
\hline
15 
& 
$\begin{array}{c}
c_{83} \bullet \phantom{c_{83}} \\
c'_{83} \square \square c''_{83}
\end{array}$ 
&
$\ast$
\\
\hline
14
&
$\ast$
&
$\ast$
\\
\hline
13
&
$\ast$
&
$\begin{array}{c}
\phantom{b_{84}} \circ b_{84} \\
\square
\end{array}$
\end{tabular}
\caption{$\Ext^{s,t}_{A_*}(H(1)\otimes
DH(1))$ near $h_2^{-1} b_{87}$}\label{tab:p*w2}
\end{table}

Using the $h_2$ multiplicative structure in Bruner's tables \cite{Bruner}, 
we deduce that
\begin{align*}
h_2 b_{84} & = b_{87}, \\
h_2 c_{83} & = 0, \\
h_2 c'_{83} & = 0, \\
h_2 c''_{83} & = 0.
\end{align*}
Since $h_2$ is a permanent cycle in the ASS for the sphere, we have
$$ d_2(b_{87}) = d_2(h_2 b_{84}) = h_2 d_2(b_{84}) = 0. $$
This completes our proof of Claim~\ref{eq:claim}.
\end{proof}

Proposition~\ref{prop:MASSmult} gives 
the following corollary.

\begin{cor}\label{cor:d3v232}
In the MASS for $\pi_*(M(1,4) \wedge DM(1,4))$, we have $d_3 (v_2^{32}) = 0$.
\end{cor}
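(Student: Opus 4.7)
The plan is to deduce $d_3(v_2^{32}) = 0$ purely formally from the previous two results, using the multiplicative structure of the MASS established in Proposition~\ref{prop:MASSmult}. The key observation is that $v_2^{32} = (v_2^{16})^2$ on the $E_3$-page, so the Leibniz rule should reduce the computation of $d_3(v_2^{32})$ to that of $d_3(v_2^{16})$, which we have already partially controlled.

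First, I would note that by Corollary~\ref{cor:d2v216}, we have $d_2(v_2^{16}) = 0$, so $v_2^{16}$ survives to the $E_3$-page of the MASS for $M(1,4) \wedge DM(1,4)$. Since $v_2^{32} = v_2^{16} \cdot v_2^{16}$ in $E_2 = \Ext_{A_*}(H(1,4) \otimes DH(1,4))$ and this product is a $d_2$-cycle, the same equality holds on $E_3$.

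Next, I would invoke Proposition~\ref{prop:MASSmult}: the MASS for $M(1,4) \wedge DM(1,4)$ is a spectral sequence of algebras, so $d_3$ is a derivation. Applying the Leibniz rule gives
\begin{equation*}
d_3(v_2^{32}) = d_3(v_2^{16}) \cdot v_2^{16} + v_2^{16} \cdot d_3(v_2^{16}).
\end{equation*}
By the previous lemma, $d_3(v_2^{16})$ lies in the image of $\Ext_{A_*}(\FF_2) \rightarrow \Ext_{A_*}(H(1,4) \otimes DH(1,4))$, and in particular is central. Hence the two summands on the right-hand side are equal, and their sum vanishes since we are working at the prime $2$.

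I do not expect a substantive obstacle here: the corollary is essentially a formal consequence of Corollary~\ref{cor:d2v216}, the centrality statement of the preceding lemma, and the Leibniz rule. The real work has already been done in establishing those two inputs; the present statement merely packages them to conclude that $v_2^{32}$ survives past the $E_3$-page.
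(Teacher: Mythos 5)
Your proposal is correct and is essentially the paper's own argument: the paper deduces the corollary directly from Proposition~\ref{prop:MASSmult} together with the centrality of $d_3(v_2^{16})$ established in the preceding lemma, exactly via the Leibniz rule computation $d_3((v_2^{16})^2) = 2\,v_2^{16}\,d_3(v_2^{16}) = 0$ in characteristic $2$ that you spell out. Your additional remark that $d_2(v_2^{16})=0$ is needed so that $v_2^{16}$ (and hence the product decomposition of $v_2^{32}$) survives to $E_3$ is the right and intended justification.
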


\section{Calculation of an Adams differential}\label{sec:diff}

The image of the element 
$\bar{\kappa} \in \pi_{20}(S)_{2}$ in $\pi_{20}(M(1,4) \wedge DM(1,4))$
gives rise to a self-map
$$ \td{\kappa} : M(1,4) \rightarrow M(1,4). $$
The element $g \in \Ext^{4,24}_{A_*}(\FF_2)$ which detects $\bar{\kappa}$
maps to a permanent cycle $\td{g} \in \Ext_{A_*}(H(1,4) \otimes DH(1,4))$
which
detects $\td{\kappa} \in \pi_{20}(M(1,4) \wedge DM(1,4))$ in the MASS.
The purpose of this section is to prove the following theorem.

\begin{thm}\label{thm:diff}
$\quad$
\begin{enumerate}
\item
The element $v_2^{20}h_1 \in \Ext^{21,142}_{A(2)_*}(H(1,4))$ lifts to an
element 
$$ \td{v_2^{20}h_1} \in \Ext^{21, 142}_{A_*}(H(1,4) \otimes DH(1,4)).$$
\item
There is a differential 
$$ d_3(\td{v_2^{20}h_1}) = \td{g}^6 + R$$
in the MASS for $M(1,4) \wedge DM(1,4)$, where $R$ is an element of
filtration greater than $0$ in the algebraic $\tmf$-resolution.
\end{enumerate}
\end{thm}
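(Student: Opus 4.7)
The plan is to treat the two parts in sequence, using the multiplicative structure of the MASS (Proposition~\ref{prop:MASSmult}), the splitting criterion of Corollary~\ref{cor:algDavisMahowald}, and the complete computation of the $\tmf \wedge M(1,4)$ MASS in Section~\ref{sec:tmfM14}.

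For part~(1), I would first produce a lift
$$ w \in \Ext^{21,142}_{A_*}(H(1,4)) $$
of $v_2^{20} h_1$ by an argument parallel to Proposition~\ref{prop:v_2^8}: identify the image of $v_2^{20} h_1$ in $\Ext^{*,*}_{A_*}(\FF_2)$ via the iterated boundary maps coming from the triangle~(\ref{eq:H(1)cofiber}) and the defining triangle of $H(1,4)$, and use Bruner's $\Ext$ software to check that the relevant obstructions vanish in this bidegree. A cleaner alternative is to write $v_2^{20} h_1 = v_2^{16} \cdot u$, using that $v_2^{16} = (v_2^8)^2$ already lies in $\Ext_{A_*}(H(1,4)\otimes DH(1,4))$ by Proposition~\ref{prop:v_2^8} and Proposition~\ref{prop:MASSmult}, and reduce the lifting problem for $u$ to a low-dimensional computation. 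Once $w$ is constructed, Corollary~\ref{cor:algDavisMahowald} promotes it to the desired $\td{v_2^{20}h_1} \in \Ext_{A_*}(H(1,4)\otimes DH(1,4))$, provided $h_0 w = 0$; this vanishing is forced by $h_0 h_1 = 0$ in $\Ext_{A_*}(\FF_2)$ together with a verification that no $h_0$-divisible contamination exists in the relevant bidegree.

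For part~(2), the strategy is to pull the differential back from the MASS for $\tmf \wedge M(1,4)$. Inspection of Figures~\ref{fig:tmfASS12}--\ref{fig:tmfASS34} reveals the differential
$$ d_3(v_2^{20} h_1) = g^6 $$
in the $\tmf$-MASS. By naturality of the MASS along the composite
$$ M(1,4) \wedge DM(1,4) \xrightarrow{1 \wedge \epsilon} M(1,4) \xrightarrow{\text{unit}} \tmf \wedge M(1,4), $$
where $\epsilon \colon DM(1,4) \to S$ is projection onto the top cell, the element $\td{v_2^{20}h_1}$ maps to $v_2^{20} h_1$, and hence $d_3(\td{v_2^{20}h_1})$ maps to $g^6$. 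The algebraic $\tmf$-resolution of Section~\ref{sec:BrownGitler} realizes $\Ext_{A(2)_*}$ as the filtration-zero quotient of $\Ext_{A_*}$, and $\td g^6$ is the canonical filtration-zero representative of $g^6$. Therefore $d_3(\td{v_2^{20}h_1}) - \td g^6$ has zero image in $\Ext_{A(2)_*}(H(1,4) \otimes DH(1,4))$, forcing it to lie in positive algebraic $\tmf$-filtration; this is exactly $R$.

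The hard part will be part~(1). The bidegree $(s,t) = (21,142)$ is deep enough that a direct obstruction-theoretic argument requires nontrivial use of Bruner's software in several adjacent bidegrees, and a multiplicative decomposition via $v_2^{16}$ requires care in choosing a representative for $u$ so that $h_0 \cdot (v_2^{16} u)$ genuinely vanishes in $\Ext_{A_*}(H(1,4))$. Part~(2) is then essentially a functoriality argument, with the only subtlety being the correct identification of the target differential in the $\tmf$-MASS charts of Section~\ref{sec:tmfM14}.
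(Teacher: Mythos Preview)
Your treatment of Part~(2) has a genuine gap: naturality alone does not establish the $d_3$. Before you may speak of $d_3(\td{v_2^{20}h_1})$ at all, you must show that $\td{v_2^{20}h_1}$ is a $d_2$-cycle in the MASS for $M(1,4)\wedge DM(1,4)$ (equivalently, by Proposition~\ref{prop:DavisMahowald}, in the MASS for $M(1,4)\wedge DM(1)$). Naturality to the $\tmf$-MASS only rules out those $d_2$-targets that are detected in filtration zero of the algebraic $\tmf$-resolution; but in $\Ext^{23,143}_{A_*}(H(1,4)\otimes DH(1))$ there are candidate targets in positive filtration (the paper calls them $x_{120}$ and $y_{120}$), and these map to zero in $\Ext_{A(2)_*}$, so the $\tmf$-comparison says nothing about them. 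The paper eliminates them by a genuinely different mechanism: multiply by $g$, then use that $v_2^{16}$ is a $d_2$-cycle (Corollary~\ref{cor:d2v216}) to transport the question to the much sparser region near $gv_2^4h_1$, where one can read off $d_2(gv_2^4h_1)=0$ directly from the resolution; a separate check shows that $g$-multiplication loses no information on the relevant targets. This is the technical heart of the proof, and your sketch omits it entirely --- so your assessment that ``the hard part will be part~(1)'' is inverted.

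For Part~(1), your proposed route is different from the paper's but not obviously wrong. The paper again works inside the algebraic $\tmf$-resolution: $v_2^{20}h_1$ sits in filtration zero, there is exactly one possible $d_1$-target (called $z_{120}$), and it is eliminated by the same multiply-by-$g$-then-shift-by-$v_2^{16}$ trick. Your direct approach via Bruner's software would require $\Ext_{A_*}(\FF_2)$ and $\Ext_{A_*}(H(1))$ near $(t-s,s)\approx(121,21)$ and in several neighboring bidegrees, which is at the edge of what is practical. The multiplicative decomposition $v_2^{16}\cdot u$ with $u$ a lift of $v_2^4h_1$ is more promising and probably works, but you still owe the existence of $u\in\Ext^{5,30}_{A_*}(H(1,4))$ mapping correctly and with $h_0 u = 0$; that is a genuine (if small) computation you have not done.
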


\begin{proof}
%
%
Table~\ref{tab:v_2^20h_1} displays a small portion of 
the $E_1$-page of the algebraic $\tmf$-resolution for $\Ext_{A_*}(H(1,4)
\wedge DH(1))$.

\begin{table}[!h]
\begin{tabular}{c|c|c}
$s \backslash t-s$  & 120 & 121
\\
\hline
24 
& 
$\begin{array}{c}
\phantom{g^6} \ssb \ssb \ssb g^6 \\
\ssc \ssc
\end{array}$ 
&
$\begin{array}{c}
\ssb \ssb \\
\ssc
\end{array}$
\\
\hline
23
&
$\begin{array}{c}
a_{120} \ssb \ssb b_{120} \\
\phantom{x_{120}} \ssc x_{120} \\
\phantom{y_{120}} \sscc y_{120}
\end{array}$
&
$\begin{array}{c}
\ssb \ssb \\
\ssc \ssc \\
\sscc \sscc
\end{array}$
\\
\hline
22
&
$\begin{array}{c}
\ssb \\
\phantom{z_{120}} \sscc z_{120}
\end{array}$
&
$\begin{array}{c}
\sscc \\
*
\end{array}$
\\
\hline
21
&
$\begin{array}{c}
\ssb \\
\ssc \\
\sscc \sscc \sscc \sscc \sscc \\
\ssccc \ssccc \ssccc \ssccc \\
*
\end{array}$
&
$\begin{array}{c}
v_2^{20}h_1 \ssb \ssb \phantom{v_2^{20}h_1}\\
\sscc \sscc \sscc \sscc \\
\ssccc \ssccc \\
*
\end{array}$
\end{tabular}
\caption{The algebraic $\tmf$-resolution for $\Ext_{A_*}(H(1,4) \otimes
DH(1))$ near $v_2^{20}h_1$}\label{tab:v_2^20h_1}
\end{table}

In this and all future tables depicting the algebraic $\tmf$-resolution, we
have the following key:
\begin{align*}
\bullet & = \text{generator of $\Ext_{A(2)_*}(H(1,4) \otimes DH(1))$,} \\
\circ & = \text{generator of $\Ext_{A(2)_*}(M_2(1)[-1] \otimes H(1,4) \otimes
DH(1))$,} \\
\scriptstyle{\odot} & = 
\text{generator of $\Ext_{A(2)_*}(M_2(1)^{\otimes 2}[-2]
\otimes H(1,4)\otimes DH(1))$,} \\
\scriptstyle{\circledcirc} & = 
\text{generator of $\Ext_{A(2)_*}(M_2(1)^{\otimes 3}[-3]
\otimes H(1,4) \otimes DH(1))$,} \\
* & = \text{potential contribution from} \\ 
& \qquad \qquad \Ext_{A(2)_*}(M_2(j_1) \otimes
\cdots \otimes M_2(j_n)[-n] \otimes H(1,4) \otimes DH(1)) \\
& \qquad \text{where either for some $i$, $j_i > 1$, or $n > 3$.}
\end{align*}
We shall refer to all differentials in the algebraic $\tmf$-resolution as
$d_1$ differentials.  Differentials in the MASS
$$ E_2^{s,t} =  \Ext^{s,t}_{A_*}(H(1,4) \otimes DH(1)) \Rightarrow \pi_{t-s}(M(1,4)
\wedge DM(1)) $$
will be referred to by $d_r$ for $r \ge 2$.

In order to prove (1), we must show that the element $v_2^{20}h_1$ in
Table~\ref{tab:v_2^20h_1} does not
support a non-trivial $d_1$.
There is one possible target $z_{120}$ in $(t-s,s)
= (120, 22)$, but we will argue shortly that this possibility cannot occur.
Assuming for the moment that $d_1(v_2^{20}h_1) = 0$,
we would conclude that $v_2^{20}h_1$ lifts to an element
$$ \td{v_2^{20}h_1} \in \Ext^{21, 142}_{A_*}(H(1,4)\otimes DH(1,4)). $$
The composite
$$ H(1,4) \wedge DH(1,4) \rightarrow H(1,4) \rightarrow \tmf \wedge H(1,4)
$$
induces a map of MASSs:
$$
\xymatrix{
\Ext^{s,t}_{A_*}(H(1,4) \otimes DH(1,4)) \ar@{=>}[r] \ar[d] 
& \pi_{t-s}(M(1,4) \wedge DM(1,4)) \ar[d] 
\\
\Ext^{s,t}_{A(2)_*}(H(1,4)) \ar@{=>}[r] 
& \pi_{t-s}(\tmf \wedge M(1,4))
}
$$
In the MASS for $\tmf \wedge M(1,4)$, there is a differential
$$ d_3(v_2^{20}h_1) = g^6. $$
In order to prove (2), we need to lift this differential to the MASS for $M(1,4) \wedge DM(1,4)$.
By Proposition~\ref{prop:DavisMahowald}, it suffices to lift this
differential to the MASS for $M(1,4) \wedge DM(1)$:
$$ \Ext^{s,t}_{A_*}(H(1,4) \otimes DH(1)) \Rightarrow
\pi_{t-s}(M(1,4)\wedge DM(1)). $$
The obstruction to lifting this differential is that $\td{v_2^{20}h_1}$
could support a $d_2$ in the MASS for $M(1,4) \wedge DM(1)$.
In fact, Table~\ref{tab:v_2^20h_1} demonstrates that there are four possible
targets for such a $d_2$ in $(t-s,s) = (120, 23)$: these are labeled
$a_{120}$, $b_{120}$, $x_{120}$, $y_{120}$.

We now argue (1) and (2) by showing that the element
$v_2^{20}h_1$ in Table~\ref{tab:v_2^20h_1} cannot support a non-trivial
$d_1$ or $d_2$.  We will need Tables~\ref{tab:gv_2^20h_1} and
\ref{tab:gv_2^4h_1}, which depict the $\tmf$-resolution in the vicinities of 
$gv_2^{20}h_1$ and $gv_2^4h_1$, respectively.

\begin{table}[!h]
\begin{tabular}{c|c|c}
$s \backslash t-s$ & 140 & 141
\\
\hline
27 
&
$\begin{array}{c}
ga_{120} \ssb \ssb gb_{120} \\
\phantom{gx_{120}} \ssc gx_{120} \\
\phantom{gy_{120}} \sscc gy_{120}
\end{array}$
&
$\begin{array}{c}
\ssb \\
\ssc \ssc \\
\sscc \sscc 
\end{array}$
\\
\hline
26
&
$\begin{array}{c}
\ssb \ssb \\
\phantom{gz_{120}} \sscc gz_{120}
\end{array}$
&
$\begin{array}{c}
\phantom{x_{141}} \ssb x_{141} \\
\sscc \\
*
\end{array}$
\\
\hline
25
&
$\begin{array}{c}
\ssb \\
\ssc \ssc \ssc \\
\sscc \sscc \sscc \\
\ssccc \ssccc \ssccc \ssccc \\
*
\end{array}$
&
$\begin{array}{c}
gv_2^{20}h_1 \ssb \ssb \phantom{gv_2^{20}h_1} \\
\ssc \ssc \\
\sscc \sscc \\
\ssccc \ssccc \\
*
\end{array}$
\end{tabular}
\caption{The algebraic $\tmf$-resolution for $\Ext_{A_*}(H(1,4)\otimes
DH(1))$ near $gv_2^{20}h_1$}\label{tab:gv_2^20h_1}
\end{table}

\begin{table}[!h]
\begin{tabular}{c|c|c}
$s \backslash t-s$ & 44 & 45
\\
\hline
11 
&
&
$\ssb$
\\
\hline
10
&
$\ssb \ssb$
&
$\phantom{v_2^{-16}x_{141}} \ssb v_2^{-16}x_{141}$
\\
\hline
9
&
$\begin{array}{c}
\ssb \\
\ssc \ssc
\end{array}$
&
$\begin{array}{c}
gv_2^4h_1 \ssb \ssb \phantom{gv_2^4h_1} \\
\ssc \ssc \\
*
\end{array}$
\end{tabular}
\caption{The algebraic $\tmf$-resolution for $\Ext_{A_*}(H(1,4)\otimes
DH(1))$ near $gv_2^{4}h_1$}\label{tab:gv_2^4h_1}
\end{table}

Write $d_1(v_2^{20}h_1) = c\cdot z_{120}$ for $c \in \FF_2$.  Then have
$$ d_1(gv_2^{20}h_1) = c \cdot gz_{120}. $$
Table~\ref{tab:gv_2^4h_1} shows that $d_1(g v_2^4 h_1) = 0$.  Multiplying
by the $d_2$-cycle $v_2^{16}$ of Corollary~\ref{cor:d2v216}, we deduce that we
must have $d_1(gv_2^{20}h_1) = 0$.  
Thus $c$ equals $0$, and we have proven (1).

Write
$$ d_2(v_2^{20}h_1) = c_1 a_{120} + c_2 b_{120} + c_3 x_{120} + c_4 y_{120} $$
for $c_i \in \FF_2$.  The image of $v_2^{20}h_1$ in $\Ext_{A(2)_*}(H(1,4))$
is a $d_2$-cycle in the MASS
$$ \Ext^{s,t}_{A(2)_*}(H(1,4)) \rightarrow \pi_{t-s}(\tmf \wedge M(1,4)). $$
We therefore deduce that $c_1 = c_2 = 0$.  We wish to show that
$d_2(v_2^{20}h_1) = 0$, i.e. that it is contained in the image of $d_1$.
We have
$$ d_2(gv_2^{20}h_1) = c_3 gx_{120} + c_4 g y_{120}. $$
Examining Table~\ref{tab:gv_2^4h_1}, we see that $d_2(gv_2^4h_1) = 0$.
Since $v_2^{16}$ is a $d_2$-cycle, we deduce that $d_2(gv_2^{20}h_1) = 0$.
This means that
$$ c_3 gx_{120} + c_4 gx_{120} $$
is in the target of a $d_1$.
With the exception of the element $x_{141}$, all of the generators in
$(t-s,s) = (141, 26)$ are $g$-periodic.  Thus we have
$$ d_1(E_1^{26,167}) = g\cdot d_1(E_1^{22, 143}) + \FF_2\{ d_1(x_{141})\} $$
where $E_1^{s,t}$ is the $E_1$-term of the algebraic $\tmf$-resolution for
$\Ext^{*,*}_{A_*}(H(1,4)\otimes DH(1))$.  However, we see from
Table~\ref{tab:gv_2^4h_1} that $d_1(v_2^{-16}x_{141}) = 0$, so it follows
that $d_1(x_{141}) = 0$.  We may therefore deduce the vanishing of
$d_2(v_2^{20}h_1)$ from the vanishing of $d_2(gv_2^{20}h_1)$.  We have
proven (2).
\end{proof}

\section{Proof of the main theorem}\label{sec:mainthm}

By Proposition~\ref{prop:v_2^8} and Lemma~\ref{lem:resolutionmult}, 
the element 
$$ v_2^{32} \in \Ext^{32,224}_{A(2)_*}(H(1,4) 
\otimes DH(1)) $$
is a permanent cycle in the
algebraic $\tmf$-resolution, and it detects an element
$$ v_2^{32} \in \Ext^{32,224}_{A_*}(H(1,4) \otimes DH(1)). $$
By Corollary~\ref{cor:d3v232}, the element
$v_2^{32}$ persists to the $E_4$-page of the MASS for $M(1,4) \wedge
DM(1)$.  By Proposition~\ref{prop:DavisMahowald}, our main theorem 
(Theorem~\ref{thm:mainthm}) is a consequence of the following lemma.

\begin{lem}
The element
$$ v_2^{32} \in \Ext^{32,224}_{A_*}(H(1,4) 
\otimes DH(1)) $$
cannot support a non-trivial $d_r$ in the MASS for $M(1,4) \wedge DM(1)$
for $r \ge 4$.
\end{lem}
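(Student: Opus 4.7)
The plan is to identify the possible targets of $d_r(v_2^{32})$ for $r \geq 4$ in the MASS for $M(1,4) \wedge DM(1)$ using the algebraic $\tmf$-resolution together with the vanishing lines of Section~\ref{sec:reduction}, and then to eliminate each possibility by combining the $\tmf$-based calculation of Section~\ref{sec:tmfM14} with the differential of Section~\ref{sec:diff}. First I would observe that since $v_2^{32}$ is a permanent cycle in the MASS for $\tmf \wedge M(1,4)$, any non-trivial $d_r(v_2^{32})$ must map to zero in that spectral sequence; hence in the algebraic $\tmf$-resolution for $\Ext^{*,*}_{A_*}(H(1,4) \otimes DH(1))$ the target must be detected in strictly positive $\tmf$-filtration.

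Next, a potential target lies in bidegree $(t-s,s) = (191, 32+r)$ with $s \geq 36$. Lemmas~\ref{lem:vanishing1} and \ref{lem:vanishing2} imply that in this region the only contributions to positive $\tmf$-filtration come from the three groups $\Ext^{*,*}_{A(2)_*}(M_2(1)^{\otimes k}[-k] \otimes H(1,4) \otimes DH(1))$ for $k = 1, 2, 3$. Using Figures~\ref{fig:bo_1M1-bo_1bo_1M1}, \ref{fig:bo_1bo_1bo_1M1-bo_1M14}, and \ref{fig:bo_1bo_1M14-bo_1bo_1bo_1M14}, shifted appropriately to account for the extra two-cell factor $DH(1)$, I would enumerate the finitely many candidate generators $z$ in each of the relevant bidegrees.

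For each candidate $z$ I would then apply two filters. The first is the map of MASSs to $\tmf \wedge M(1,4)$: any class whose image is non-zero on the $E_r$-page of that MASS is excluded, since $v_2^{32}$ survives there. The second uses Theorem~\ref{thm:diff}: the differential $d_3(\td{v_2^{20}h_1}) = \td{g}^6 + R$, together with the module structure of Proposition~\ref{prop:MASSmult} over the MASS for $M(1,4) \wedge DM(1,4)$, provides relations that annihilate any candidate in which $\bar{\kappa}^6$ appears as a factor, and these relations propagate under the split projection of Proposition~\ref{prop:DavisMahowald} to the MASS for $M(1,4) \wedge DM(1)$. Coupling this with the vanishings $d_2(v_2^{32}) = 0$ and $d_3(v_2^{32}) = 0$ from Section~\ref{sec:d2d3}, these should eliminate every enumerated candidate.

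The main obstacle I anticipate is the hidden-extension bookkeeping inside the algebraic $\tmf$-resolution: a candidate target $z$ in $\tmf$-filtration $k$ is only well-defined modulo the image of $d_1$ from filtration $k-1$ and modulo lower-filtration corrections, so elimination requires checking that every generator in each of the three Ext groups, in each of the relevant bidegrees, is either killed by one of the differentials above or is already a boundary in the algebraic $\tmf$-resolution. Carrying this out should use the sort of careful chase performed in the proof of Theorem~\ref{thm:diff}: one propagates the known vanishings via $g$-periodicity and multiplication by the permanent cycles $v_2^{8}$ and $v_2^{16}$ (available thanks to Lemma~\ref{lem:d2v28} and Corollary~\ref{cor:d2v216}) from verifications in low stems up into the region near the $191$-stem, completing the case analysis.
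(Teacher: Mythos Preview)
Your proposal is correct and follows essentially the same approach as the paper. The paper executes exactly this strategy: it tabulates the algebraic $\tmf$-resolution for $\Ext_{A_*}(H(1,4)\otimes DH(1))$ near $(t-s,s)=(191,36)$ and near $(71,12)$, observes that every positive-filtration candidate in the $191$-column is of the form $g^6\cdot x$ for some $x$ in the $71$-column (this is forced by the slope-$1/5$ vanishing line), and then uses Theorem~\ref{thm:diff} to conclude $g^6 x$ vanishes in $E_4$ whenever $x$ survives to $E_4$; the residual case where $x$ itself supports a $d_1$ or $d_2$ is handled by a short chase using $v_2^8$-multiplication, just as you anticipate in your final paragraph.
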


\begin{proof}
We shall make use of the following tables.  Table~\ref{tab:v_2^32a} depicts
the algebraic $\tmf$-resolution for $\Ext^{*,*}_{A_*}(H(1,4) \otimes
DH(1))$ in the region where all possible targets of
$d_r(v_2^{32})$ can lie, for $r \ge 4$.  Note that there are no non-zero
elements in the algebraic $\tmf$-resolution that can contribute to
$\Ext^{s,191+s}_{A_*}(H(1,4) \otimes DH(1))$ for $s > 40$.
Table~\ref{tab:v_2^32b} depicts a region of the algebraic $\tmf$-resolution
which maps to the region of Table~\ref{tab:v_2^32a} under
$g^6$-multiplication.  The notation in these tables is explained in
Section~\ref{sec:diff}.  The subgroups labeled $G_{191}$ and $G_{71}$ are
the subgroups generated by the contributions in the algebraic
$\tmf$-resolution labeled with a $*$.

\begin{table}[!h]
\begin{tabular}{c|c|c}
$s \backslash t-s$ & 190 & 191
\\
\hline
40
&
$\ssb$
&
$\ssb \ssb$
\\
\hline
39
&
$\ssb$
&
$\ssb$
\\
\hline
38
&
$\begin{array}{c}
\ssb \ssb \ssb \\
g^6b_{70} \ssc \ssc g^6c_{70}
\end{array}$
&
$\begin{array}{c}
\ssb \ssb \\
g^6f_{71} \ssc \phantom{g^6f_{71}}
\end{array}$
\\
\hline
37
&
$\begin{array}{c}
\ssb \ssb \\
\ssc  \\
g^6a_{70} \sscc \phantom{g^6a_{70}}
\end{array}$
&
$\begin{array}{c}
v_2^8 k_{143} \ssb \ssb v_2^8 l_{143} \\
g^6d_{71} \ssc \ssc g^6e_{71} \\
g^6b_{71} \sscc \sscc g^6 c_{71}
\end{array}$
\\
\hline
36
&
$\begin{array}{c}
\ssb \ssb \\
\sscc
\end{array}$
&
$\begin{array}{c}
\ssb \\
g^6a_{71} \sscc \phantom{g^6a_{71}} \\
G_{191} * \phantom{G_{191}}
\end{array}$
\end{tabular}
\caption{The algebraic $\tmf$-resolution for $\Ext^{*,*}_{A_*}(H(1,4)
\otimes DH(1))$ in the vicinity of $(t-s,s) = (191, 36)$}\label{tab:v_2^32a}
\end{table}

\begin{table}[!h]
\begin{tabular}{c|c|c}
$s \backslash t-s$ & 70 & 71
\\
\hline
15
&
$\ssb$
&
$\ssb$
\\
\hline
14
&
$\begin{array}{c}
\ssb \ssb \\
b_{70} \ssc \ssc c_{70} 
\end{array}$
&
$\begin{array}{c}
\ssb \ssb \\
f_{71} \ssc \phantom{f_{71}}
\end{array}$
\\
\hline
13
&
$\begin{array}{c}
 \ssb  \\
 \ssc \ssc  \\
a_{70} \sscc \phantom{a_{70}}
\end{array}$
&
$\begin{array}{c}
d_{71} \ssc \ssc \ssc e_{71} \\
b_{71} \sscc \sscc c_{71} 
\end{array}$
\\
\hline
12
&
$\begin{array}{c}
\sscc \\
*
\end{array}$
&
$\begin{array}{c}
a_{71} \sscc \sscc \sscc \sscc \sscc \phantom{a_{71}} \\
G_{71} * \phantom{G_{71}}
\end{array}$
\end{tabular}
\caption{The algebraic $\tmf$-resolution for $\Ext^{*,*}_{A_*}(H(1,4)
\otimes DH(1))$ in the vicinity of $(t-s,s) = (71, 12)$}\label{tab:v_2^32b}
\end{table}

The element $v_2^{32} \in \Ext^{32,224}_{A(2)_*}(H(1,4))$ detects a non-trivial
permanent cycle of order $2$ in $\tmf_{192}M(1,4)$.  We deduce that the
element
$$ v_2^{32} \in \Ext_{A(2)_*}(H(1,4) \otimes DH(1)) $$
is a permanent cycle in the MASS for $\tmf \wedge M(1,4) \wedge DM(1)$.
Consider the map of MASSs
\begin{equation}\label{eq:MASSmap}
\xymatrix{
\Ext_{A_*}^{s,t}(H(1,4) \otimes DH(1)) \ar@{=>}[r] \ar[d] &
\pi_{t-s}(M(1,4) \wedge DM(1)) \ar[d] \\
\Ext_{A(2)_*}^{s,t}(H(1,4) \otimes DH(1)) \ar@{=>}[r] &
\tmf_{t-s}(M(1,4) \wedge DM(1))
}
\end{equation}
induced by the map
$$ M(1,4) \wedge DM(1) \rightarrow \tmf \wedge M(1,4) \wedge DM(1). $$
Because $v_2^{32}$ is a permanent cycle in the MASS for $\tmf \wedge M(1,4)
\wedge DM(1)$, we deduce that in the MASS for $M(1,4) \wedge DM(1)$, the
differential $d_r(v_2^{32})$ cannot hit an element coming from
$\Ext_{A(2)_*}$ in the algebraic $\tmf$-resolution (these elements are
represented by a $\ssb$ in Table~\ref{tab:v_2^32a}).  Thus the only
possible targets for $d_r(v_2^{32})$ in Table~\ref{tab:v_2^32a} are
\begin{equation}\label{eq:targets}
g^6 a_{71}, g^6 b_{71}, g^6 c_{71}, g^6 d_{71}, g^6 e_{71}, g^6 f_{71}
\end{equation}
or an element of the group $G_{191}$.
We claim that none of these elements persist to detect a non-trivial
element of the $E_4$-page of the MASS.

Each of the elements in (\ref{eq:targets}) is in the image of
multiplication by $g^6$.  Because the groups $G_{71}$ and $G_{191}$ lie on the 
edge of
the slope $1/5$ vanishing line of Lemmas~\ref{lem:vanishing1} and
\ref{lem:vanishing2}, each of the elements in $G_{191}$ are of the form
$g^6y$ for $y \in G_{71}$.

Suppose that $x$ is a linear combination of the elements
\begin{equation}\label{eq:pretargets}
a_{71}, b_{71}, c_{71}, d_{71}, e_{71}, f_{71}
\end{equation}
and the elements in $G_{71}$.  We must show that $g^6 x$ cannot be
the non-trivial image of $d_r(v_2^{32})$ for $r \ge 4$.

If $x$ is a $d_r$-cycle for $r \le 3$, then
$x$ persists to $E_4$.  Using the multiplicative structure of the MASS
(Proposition~\ref{prop:MASSmult}) together with the fact that 
$g^6 = 0$
in
the $E_4$-page of the MASS for $M(1,4)\wedge DM(1,4)$
(Theorem~\ref{thm:diff})\footnote{
This statement must be interpreted with care --- Theorem~\ref{thm:diff}
asserts that there is an element in $E_2$ of the MASS for $M(1,4) \wedge
DM(1,4)$ which is detected by $\td{g}^6$ in the algebraic
$\tmf$-resolution, and which is the target of a $d_3$ in the MASS.
} 
, we deduce that $g^6 x$ is 
zero 
in $E_4$.  It
therefore cannot be a non-trivial target for $d_r(v_2^{32})$.

Suppose, however, that $d_r(x)$ is non-trivial for some $r \le 3$.  
Since differentials in the algebraic $\tmf$ resolution
must increase filtration, we deduce that the only possible targets for
$d_r(x)$ are linear combinations of 
$$ a_{70}, b_{70}, c_{70} $$
and $\ssb$'s in Table~\ref{tab:v_2^32b} for which $t-s = 70$ and $s \ge
14$.  However, each of these $\ssb$'s 
map to non-trivial permanent cycles under the map of spectral sequences
(\ref{eq:MASSmap}), and therefore cannot be the target of MASS
differentials.  The only remaining possibilities are
\begin{align*}
\text{Case (1):} \qquad & d_1(x) = a_{70}, \\
\text{Case (2):} \qquad & d_2(x) = t_1 b_{70} + t_2 c_{70},
\end{align*}
for $(0,0) \ne (t_1, t_2) \in \FF_2 \oplus \FF_2$.  Using
Theorem~\ref{thm:diff} we see that in these cases we would
respectively have:
\begin{align*}
\text{Case (1):} \qquad & d_1(g^6x) = g^6a_{70}, \\
\text{Case (2):} \qquad & d_2(g^6x) = t_1 g^6b_{70} + t_2 g^6c_{70}.
\end{align*}
If we are in Case (1), we are done: the differential $d_r(v_2^{32})$ cannot
be detected by $g^6 x$ because $g^6 x$ does not persist to $E_2$.  If we
are in Case (2), however, 
we must verify that $t_1 g^6b_{70} + t_2 g^6c_{70}$ is not
in the image of a $d_1$-differential.  The only possibility is
\begin{equation}\label{eq:possibility}
d_1(s_1 v_2^8 k_{143} + s_2 v_2^8 l_{143}) = t_1 g^6b_{70} + t_2 g^6c_{70}.
\end{equation}
The algebraic $\tmf$-resolution for $\Ext_{A_*}(H(1,4)\otimes DH(1))$ in
the vicinity of the elements $k_{143}$ and $l_{143}$ is displayed below.
\vspace{10pt}

\centerline{
\begin{tabular}{c|c|c}
$s \backslash t-s $ & 142 & 143 
\\
\hline
30 & $\ssb$ & $\ssb$ 
\\
\hline
29 & $\ssb \ssb$ & $k_{143} \ssb \ssb l_{143}$
\end{tabular}
}
\vspace{10pt}

We see that $k_{143}$ and $l_{143}$ must be $d_1$-cycles.  By
Proposition~\ref{prop:v_2^8} and Lemma~\ref{lem:resolutionmult}, we deduce
that $v_2^8k_{143}$ and $v_2^8l_{143}$ must be $d_1$-cycles.  Thus
Possibility~(\ref{eq:possibility}) cannot occur, and we deduce that
in Case (2), $d_2(g^6 x)$ does not vanish.  
We conclude that in Case (2), $g^6 x$ cannot persist to
$E_4$ and therefore it cannot be the
target of $d_r(v_2^{32})$ for $r \ge 4$.
\end{proof}

\nocite{*}
\bibliography{v2_32}

\providecommand{\bysame}{\leavevmode\hbox to3em{\hrulefill}\thinspace}
\providecommand{\MR}{\relax\ifhmode\unskip\space\fi MR }
\providecommand{\MRhref}[2]{%
  \href{http://www.ams.org/mathscinet-getitem?mr=#1}{#2}
}
\providecommand{\href}[2]{#2}
\begin{thebibliography}{BMMS86}

\bibitem[Ada66]{Adams}
J.~F. Adams, \emph{On the groups {$J(X)$}. {IV}}, Topology \textbf{5} (1966),
  21--71.

\bibitem[Bau08]{Bauer}
Tilman Bauer, \emph{Computation of the homotopy of the spectrum $tmf$}, Groups,
  homotopy and configuration spaces (Tokyo 2005), Geometry and Topology
  Monographs, vol.~13, Geometry \& Topology Publications, Coventry, 2008,
  pp.~11--40.

\bibitem[Beh07]{Behrensroot2}
Mark Behrens, \emph{Some root invariants at the prime 2}, Proceedings of the
  {N}ishida {F}est ({K}inosaki 2003), Geom. Topol. Monogr., vol.~10, Geom.
  Topol. Publ., Coventry, 2007, pp.~1--40 (electronic).

\bibitem[BMMS86]{Hinfty}
R.~R. Bruner, J.~P. May, J.~E. McClure, and M.~Steinberger, \emph{{$H\sb \infty
  $} ring spectra and their applications}, Lecture Notes in Mathematics, vol.
  1176, Springer-Verlag, Berlin, 1986.

\bibitem[BP04]{BehrensPemmaraju}
Mark Behrens and Satya Pemmaraju, \emph{On the existence of the self map {$v\sp
  9\sb 2$} on the {S}mith-{T}oda complex {$V(1)$} at the prime 3}, Homotopy
  theory: relations with algebraic geometry, group cohomology, and algebraic
  $K$-theory, Contemp. Math., vol. 346, Amer. Math. Soc., Providence, RI, 2004,
  pp.~9--49.

\bibitem[Bru93]{Bruner}
Robert~R. Bruner, \emph{{${\rm Ext}$} in the nineties}, Algebraic topology
  (Oaxtepec, 1991), Contemp. Math., vol. 146, Amer. Math. Soc., Providence, RI,
  1993, pp.~71--90.

\bibitem[DHS88]{NilpII}
Ethan~S. Devinatz, Michael~J. Hopkins, and Jeffrey~H. Smith, \emph{Nilpotence
  and stable homotopy theory. {I}}, Ann. of Math. (2) \textbf{128} (1988),
  no.~2, 207--241.

\bibitem[DM81]{DavisMahowald}
Donald~M. Davis and Mark Mahowald, \emph{{$v\sb{1}$}- and
  {$v\sb{2}$}-periodicity in stable homotopy theory}, Amer. J. Math.
  \textbf{103} (1981), no.~4, 615--659.

\bibitem[DM82]{DavisMahowaldA(2)}
\bysame, \emph{Ext over the subalgebra {$A\sb{2}$} of the {S}teenrod algebra
  for stunted projective spaces}, Current trends in algebraic topology, Part 1
  (London, Ont., 1981), CMS Conf. Proc., vol.~2, Amer. Math. Soc., Providence,
  RI, 1982, pp.~297--342.

\bibitem[HG94]{GrossHopkins}
M.~J. Hopkins and B.~H. Gross, \emph{The rigid analytic period mapping,
  {L}ubin-{T}ate space, and stable homotopy theory}, Bull. Amer. Math. Soc.
  (N.S.) \textbf{30} (1994), no.~1, 76--86.

\bibitem[HM]{HopkinsMahowald}
Michael~J. Hopkins and Mark Mahowald, \emph{From elliptic curves to homotopy
  theory}, http://hopf.math.purdue.edu/.

\bibitem[HS98]{NilpI}
Michael~J. Hopkins and Jeffrey~H. Smith, \emph{Nilpotence and stable homotopy
  theory. {II}}, Ann. of Math. (2) \textbf{148} (1998), no.~1, 1--49.

\bibitem[Mah81]{Mahowaldv2}
Mark Mahowald, \emph{The primary {$v\sb{2}$}-periodic family}, Math. Z.
  \textbf{177} (1981), no.~3, 381--393.

\bibitem[MR99]{MahowaldRezk}
Mark Mahowald and Charles Rezk, \emph{Brown-{C}omenetz duality and the {A}dams
  spectral sequence}, Amer. J. Math. \textbf{121} (1999), no.~6, 1153--1177.

\bibitem[Nak72]{Nakamura}
Osamu Nakamura, \emph{On the squaring operations in the {M}ay spectral
  sequence}, Mem. Fac. Sci. Kyushu Univ. Ser. A \textbf{26} (1972), no.~2,
  293--308.

\bibitem[Rav86]{Ravenel}
Douglas~C. Ravenel, \emph{Complex cobordism and stable homotopy groups of
  spheres}, Pure and Applied Mathematics, vol. 121, Academic Press Inc.,
  Orlando, FL, 1986.

\bibitem[Rav92]{Ravenelorange}
\bysame, \emph{Nilpotence and periodicity in stable homotopy theory}, Annals of
  Mathematics Studies, vol. 128, Princeton University Press, Princeton, NJ,
  1992, Appendix C by Jeff Smith.

\bibitem[Smi70]{Smith}
Larry Smith, \emph{On realizing complex bordism modules. {A}pplications to the
  stable homotopy of spheres}, Amer. J. Math. \textbf{92} (1970), 793--856.

\bibitem[Tan70]{Tangora}
Martin~C. Tangora, \emph{On the cohomology of the {S}teenrod algebra}, Math. Z.
  \textbf{116} (1970), 18--64.

\bibitem[Tod71]{Toda}
Hirosi Toda, \emph{On spectra realizing exterior parts of the {S}teenrod
  algebra}, Topology \textbf{10} (1971), 53--65.

\bibitem[Wei94]{Weibel}
Charles~A. Weibel, \emph{An introduction to homological algebra}, Cambridge
  Studies in Advanced Mathematics, vol.~38, Cambridge University Press,
  Cambridge, 1994.

\end{thebibliography}

\end{document}